\newif\ifArxiv

\Arxivtrue

\ifArxiv
\documentclass[journal,twoside,web]{ieeecolor12_arxiv}              
\else
\documentclass[journal,twoside,web]{ieeecolor12}              
\fi

\ifArxiv

\usepackage[T1]{fontenc}
\usepackage{amsmath,amssymb,amsfonts,mathtools}
\usepackage{mathrsfs}
\usepackage{dsfont}
\usepackage{bbm}
\usepackage{bbold}
\usepackage{textcomp}
\usepackage{siunitx}
\usepackage{graphicx}
\usepackage[ruled]{algorithm2e}

\let\labelindent\relax
\usepackage{enumitem}

\usepackage{caption}
\usepackage{subcaption}
\usepackage[dvipsnames]{xcolor} 
\definecolor{thmBgColor}{HTML}{E8F0F8}
\definecolor{assumBgColor}{HTML}{EAF2E8}
\definecolor{highlightBgColor}{rgb}{0.98, 0.94, 0.88}

\definecolor{niceGreen}{rgb}{0.1, 0.625, 0.1}
\definecolor{cn}{RGB}{93,147,191}      
\definecolor{cz}{RGB}{233,  72, 73}      
\definecolor{cp}{RGB}{113, 191, 110}

\DeclarePairedDelimiterX{\inp}[2]{\langle}{\rangle}{#1, #2}

\usepackage{tikz}
\usetikzlibrary{arrows, arrows.meta, graphs}
\usepackage{pgfplots}
\pgfplotsset{compat=1.18}
\usepackage{pgf-pie}
\usepackage{etoolbox}

\newtoggle{showpct}
\makeatletter
\patchcmd{\pgfpie@slice}%
{\scalefont{#3}\beforenumber#3\afternumber}%
{\iftoggle{showpct}{\scalefont{#3}\beforenumber#3\afternumber}{}}%
{}{}
\makeatother

\usepackage{amsthm}
\renewcommand{\qedsymbol}{\hfill $\blacksquare$} 
\theoremstyle{definition}

\newtheorem{theorem}{Theorem}[section]

\newtheorem{lemma}[theorem]{Lemma}
\newtheorem{proposition}[theorem]{Proposition}
\newtheorem{remark}[theorem]{Remark} 
\newtheorem{corollary}[theorem]{Corollary}
\newtheorem{definition}[theorem]{Definition}

\newtheorem{example}[theorem]{Example}
\newtheorem{assumption}{Assumption}
\newtheorem{fact}[theorem]{Fact}

\newtheorem{thm}[theorem]{Theorem}
\newtheorem{lem}[theorem]{Lemma}
\newtheorem{prop}[theorem]{Proposition}
\newtheorem{cor}[theorem]{Corollary}
\newtheorem{defn}[theorem]{Definition}
\newtheorem{conj}[theorem]{Conjecture}
\newtheorem{exmp}[theorem]{Example}
\newtheorem{rmk}[theorem]{Remark}
\newtheorem{ass}[assumption]{Assumption}

\usepackage[breakable]{tcolorbox} 
\tcbset{
  mathbox/.style={
    breakable, colback=thmBgColor, colframe=thmBgColor,
    boxrule=0pt, arc=4pt, top=1ex, bottom=1ex, left=1ex, right=1ex,
    before skip=1em, after skip=1em
  },
  assumbox/.style={
    breakable, colback=assumBgColor, colframe=assumBgColor,
    boxrule=0pt, arc=4pt, top=1ex, bottom=1ex, left=1ex, right=1ex,
    before skip=1em, after skip=1em
  }
}

\tcolorboxenvironment{theorem}{mathbox} \tcolorboxenvironment{thm}{mathbox}
\tcolorboxenvironment{lemma}{mathbox}   \tcolorboxenvironment{lem}{mathbox}
\tcolorboxenvironment{proposition}{mathbox} \tcolorboxenvironment{prop}{mathbox}
\tcolorboxenvironment{corollary}{mathbox} \tcolorboxenvironment{cor}{mathbox}

\tcolorboxenvironment{definition}{assumbox} \tcolorboxenvironment{defn}{assumbox}
\tcolorboxenvironment{assumption}{assumbox} \tcolorboxenvironment{ass}{assumbox}

\newtcolorbox{texthighlight}{
  breakable, colback=highlightBgColor, colframe=highlightBgColor,
  arc=4pt, boxrule=0pt, top=1ex, bottom=1ex, left=1ex, right=1ex,
  before skip=1ex, after skip=1ex, after={\par\noindent\ignorespaces}
}

\makeatletter
\@ifpackageloaded{etoolbox}{
  \AfterEndEnvironment{texthighlight}{\noindent\ignorespaces}
}{}
\makeatother
\else

\usepackage{cite}
\usepackage{amsmath,amssymb,amsfonts}
\newcommand{\qedsymbol}{\hfill $\blacksquare$}
\usepackage{graphicx}
\usepackage{textcomp}
\usepackage{mathtools}
\usepackage{bbm}
\usepackage{empheq}
\usepackage[T1]{fontenc}
\usepackage{siunitx}
\usepackage{dsfont}
\usepackage[ruled]{algorithm2e}
\usepackage{mathrsfs}
\usepackage{tikz}
\usetikzlibrary{arrows}
\usetikzlibrary{arrows.meta,graphs}

\usepackage{bbold}
\usepackage{fancyhdr}
\usepackage{pgfplots}
\let\labelindent\relax
\usepackage{enumitem}   

\definecolor{niceGreen}{rgb}{0.1, 0.625, 0.1}

\DeclarePairedDelimiterX{\inp}[2]{\langle}{\rangle}{#1, #2}

\definecolor{cn}{RGB}{93,147,191}           
\definecolor{cz}{RGB}{233,  72, 73}      
\definecolor{cp}{RGB}{113, 191, 110}    

\newtheorem{thm}{Theorem}[section]
\newtheorem{lem}[thm]{Lemma}
\newtheorem{prop}[thm]{Proposition}
\newtheorem{cor}{Corollary}
\newtheorem{defn}{Definition}[section]
\newtheorem{conj}{Conjecture}[section]
\newtheorem{exmp}{Example}[section]
\newtheorem{rmk}{Remark} 
\newtheorem{ass}{Assumption}

\usepackage{pgf-pie}
\usepackage{etoolbox}
\newtoggle{showpct}
\makeatletter
\patchcmd{\pgfpie@slice}%
{\scalefont{#3}\beforenumber#3\afternumber}%
{\iftoggle{showpct}{\scalefont{#3}\beforenumber#3\afternumber}{}}%
{}{}
\makeatother

\usepackage{subcaption}

\fi
\usepackage{generic}                             

\usetikzlibrary{arrows.meta,bending,positioning}
\usetikzlibrary{calc}
\usetikzlibrary{patterns}
\newcommand\mydots{\hbox to 1em{.\hss.\hss.}}
\usetikzlibrary{positioning,shapes.geometric,fit}

\newcommand{\densedots}{.\kern0.05em.\kern0.05em.}

\usepackage{bm}
\usepackage{booktabs}
\usepackage{multirow}

\makeatletter
\let\NAT@parse\undefined 
\expandafter\let\csname ver@cite.sty\endcsname\relax
\makeatother

\ifArxiv
\usepackage{cite}  
\fi

\definecolor{safegreen}{HTML}{009E73}
\definecolor{unsafeorange}{HTML}{D55E00}
\definecolor{safeblue}{HTML}{0072B2}

\pgfplotsset{compat=1.17}

\ifArxiv
\makeatletter

\let\oldtitle\title
\renewcommand{\title}[1]{\oldtitle{\color{black}#1}}

\def\ps@headings{%
  \def\@oddhead{}%
  \def\@evenhead{}%
  \def\@oddfoot{\small\rmfamily\hfil\thepage\hfil}%
  \def\@evenfoot{\small\rmfamily\hfil\thepage\hfil}%
}

\def\@IEEEheaderstyle{}

\makeatother

\AtBeginDocument{
  \colorlet{spotcolor}{black}
  \colorlet{journalcolor}{black}
  \colorlet{secnumcolor}{black}
  \colorlet{jnlcolor}{black}
  \colorlet{color1}{black}
  \colorlet{color2}{black}
  \colorlet{accessblue}{black}
  
  \colorlet{subsectioncolor}{black}
  \colorlet{nblue}{black}
  \colorlet{mblue}{black}
}
\fi

\usepackage{hyperref}
\hypersetup{
    hidelinks
}

\begin{document}
\title{Joint Chance Constrained Safe-Optimal Control}
\author{Niklas Schmid, Jared Miller, Tristan Zeller, Marta Fochesato, Tobias Sutter, John Lygeros
\thanks{N.~Schmid, M.~Fochesato, and J.~Lygeros are with the Automatic Control Laboratory (IfA), ETH Z\"urich, 8092 Z\"urich, Switzerland, 
        {\tt\small \{nikschmid, jlygeros\}@ethz.ch}, {\tt\small \ marta.fochesato@gmail.com}}%
\thanks{J.~Miller is with the Institute of Mathematical Methods in Engineering, Numerical Analysis and Geometric Modeling, University of Stuttgart, 70569 Stuttgart, Germany,
        {\tt\small \ jared.miller@imng.uni-stuttgart.de}}%
\thanks{T.~Zeller is with the Gymnasium Burgdorf, 3400 Burgdorf,
        {\tt\small \ trzeller@student.ethz.ch}}%
\thanks{T.~Sutter is with the Swiss Institute for Empirical Economic Research (SEW), University of St.Gallen, 9000 St.~Gallen, Switzerland
        {\tt\small \ tobias.sutter@unisg.ch}}%
\thanks{Work supported by the Swiss National Science Foundation under NCCR Automation under grant 51NF40\_225155.}%
}

\maketitle
\thispagestyle{headings}


\begin{abstract}
We consider the finite-time optimal control of stochastic systems subject to a probabilistic constraint on the trajectories' safety. Such formulations are known as joint chance constrained optimal control problems. The common practice is to jointly minimise the expected cost of all trajectories, safe and unsafe. This leads to policies which invite constraint violations to exploit low-cost unsafe trajectories. When constraints represent states of critical failure, such behaviour is undesirable. We demonstrate that this behaviour can be overcome by only minimizing the expected cost of safe trajectories. The underlying rationale follows a practical intuition: In many applications, the cost incurred by unsafe trajectories is irrelevant (e.g., the battery usage of a crashed quadcopter), and one is usually interested in minimizing the cost of trajectories that are safe. We show that this problem can be cast as a constrained Markov Decision Process over an augmented state space. This allows solving it via dynamic programming. We derive bounds on the policies' safety under errors resulting from gridding approximations when the system's state space is continuous. Finally, we empirically compare dynamic programming as well as reinforcement learning solutions on a simulated 2D unicycle system in cluttered reach-avoid environments.
\end{abstract}

\ifArxiv
\else
\begin{IEEEkeywords}
Stochastic Optimal Control, Constrained Markov Decision Processes, Joint Chance Constraints, Dynamic Programming, Reinforcement Learning.
\end{IEEEkeywords}
\fi 

\section{Introduction}
\label{sec_intr}
\begin{figure}[!t]
    \centering
    \begin{subfigure}{0.49\linewidth}
        \centering
        {\small
    \begin{align*}
        &\text{JCC Optimal Control:} \\
        &
        \begin{aligned}
        &\ \min &&\text{Cost}(\textcolor{safegreen}{\blacksquare}+\textcolor{unsafeorange}{\blacksquare}) \\
        & \ \ \text{s.t.}  &&\mathbb{P}(\textcolor{safegreen}{\blacksquare}) \geq \alpha
    \end{aligned}
    \end{align*}}\includegraphics[width=\linewidth]{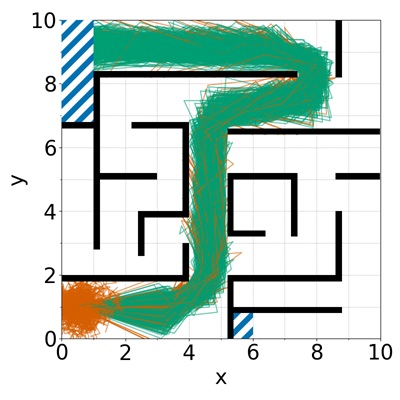}
    \end{subfigure}
    \begin{subfigure}{0.49\linewidth}
        \centering
        {\small
            \begin{align*}
            &\quad \ \ \ \text{JCC \emph{Safe}-Optimal Control:} \\
            &
            \begin{aligned}
                &\qquad \  \min &&\text{Cost}(\textcolor{safegreen}{\blacksquare}) \\
                & \qquad \ \ \text{s.t.}  &&\mathbb{P}(\textcolor{safegreen}{\blacksquare}) \geq \alpha
            \end{aligned}
        \end{align*}}
        \includegraphics[width=\linewidth]{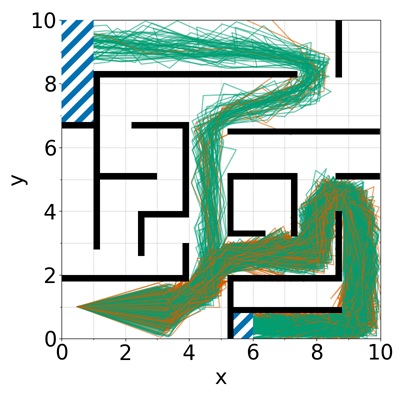}
    \end{subfigure}
    \caption{Trajectories of a perturbed delivery robot. The aim is to minimise actuation costs while safely reaching the target with probability $\alpha= 0.6$. The standard joint chance constrained control policy from the literature (left) achieves a low cost by occasionally "giving up" on the delivery objective, and following a long but safe route otherwise. In contrast, we exclusively optimize the cost of trajectories that safely reach the target (right), and disregard the cost of all others, leading to a lower actuation cost for those successful trajectories ($21.8$ left vs. $19.7$ right over $1000$ Monte-Carlo simulations), see Section~\ref{sec_num} for details. Black: Unsafe Set; Blue striped: Target Set; Trajectories are green if they reach the target, red otherwise; Initial state: $(0.5,1)$.}
    \label{fig_maze_trajectories}
\end{figure}

Safe autonomy under stochastic uncertainty is a fundamental challenge in control, robotics and machine learning: Autonomous driving \cite{yurtsever2020survey, rahman2025trajectory} promises fast but safe travel in hard-to-predict traffic environments, precision agriculture \cite{shang2019robust} aims for resource efficient irrigation with low risks in crop losses under uncertain weather predictions, and medical devices \cite{hewing2021volume} must sustain life-critical performance under tight safety constraints despite biological variability. Accordingly, stochastic control problems with joint chance constraints (JCC) are common in the control literature. Typical formulations comprise two components, a cost assigned to each trajectory (which, for the sake of the argument, we consider to be financial), and a constraint encoded by a set of safe trajectories (joint chance constraint). The aim of the control policy is to minimize the financial cost while ensuring that the probability that the closed loop trajectories remain safe is above a specified value. Solution approaches to JCC problems range from stochastic model predictive control \cite{farina_1, wang_2, bavdekar, raghuraman, paulson, ono_5}, constrained Markov Decision Processes (MDPs) solved through dynamic programming (DP) \cite{schmid2023computing, Ono_2, Wang, hahn2019interval}, linear programming \cite{schmid2024joint, haesaert2021formal, etessami2008multi,grontas2024operator}, and reinforcement learning (RL) \cite{chen_1, xu_1, zhang_1, ding_1, tessler_1, ni2025learning}, as well as Lyapunov-based methods \cite{mestres2025probabilistic}. 

In many applications, the violation of constraints is reversible. For instance, comfort constraint violations in buildings (\SI{26}{\celsius} rather than 
the nominal \SI{23}{\celsius} in the summer) are not a source of major concern and the constraints can be reentered after 
violation. If unsafe trajectories have a lower cost, this leads to a trade-off between constraint violation and cost: The resulting controller will ensure safety with the specified probability but take advantage of financial savings, potentially by violating constraints.

In other applications, however, constraint violation is catastrophic and irreversible. For example, consider an airline optimizing flight paths for minimum fuel while ensuring a sufficient level of safety. In such cases, trading-off safety for cost is meaningless; the fuel cost of trajectories that violate the constraint by crashing the airplane is irrelevant and should not be included in the operating profit of an airline. The fact that JCC control policies systematically exploit low-cost unsafe trajectories has been reported in the literature \cite{schmid2023computing,schmid2024joint,ni2025learning,grontas2024operator} and makes their applicability to systems with irreversible constraints unclear. 

Motivated by these reports, we study the problem of only optimizing safe trajectories in the financial cost and ignoring the cost of unsafe trajectories. Fig.~\ref{fig_maze_trajectories} displays how this modification affects the behaviour of JCC control policies on a delivery robot that minimizes battery usage. The standard JCC control policy occasionally "gives up" on the delivery-constraint to achieve a low-cost trajectory. In contrast, our approach achieves lower costs on safe trajectories at the risk of high-cost unsafe trajectories. We summarize the contributions of this paper:    
\begin{itemize}
    \item We introduce a novel control problem where costs of safe trajectories are minimized subject to a JCC.
    \item We prove that the problem can be reformulated as a constrained MDP over an augmented state space, whose optimal policies coincide with those of the original problem.
    \item We derive bounds on the policies' safety under errors from gridding based approximations.
    \item We empirically validate our DP method in numerical experiments and compare with a state-of-the-art alternative based on RL.
\end{itemize}

\textbf{Outline.}
We introduce necessary preliminaries and our problem formulation in Section \ref{sec_prelim}, cast it as a constrained MDP and derive DP approximation error bounds in Section \ref{sec_methods}, and numerically evaluate our formulation in Section \ref{sec_num}. We conclude with a summary in Section \ref{sec_concl}.

\textbf{Notation.} 
We denote by $[N]$ the set $\{0,1,\mydots,N\},N\in\mathbb{N}$. The indicator function of a set $\mathcal{A}$ yields $\mathbb{1}_{\mathcal{A}}(x)=1$ if $x\in {\mathcal{A}}$ and $\mathbb{1}_{\mathcal{A}}(x)=0$ otherwise. The set-wise difference of $\mathcal{A}$ and $\mathcal{X}$ is $\mathcal{A}\setminus \mathcal{X} = \{x\in \mathcal{A}: x\notin  \mathcal{X}\}$. For a topological space $S$, we denote by $C_b(S)$ the space of continuous bounded real-valued functions on $S$ and by $\mathcal{B}(S)$ the Borel $\sigma$-algebra of $S$. Further, $\boldsymbol{M}_+$ and $\mathcal{P}$ are the sets of finite non-negative measures and probability measures on $(S,\mathcal{B}(S))$. The Dirac-delta distribution centered at $s\in S$ is $\delta_{s}(\cdot)$. 
For a function $f$ with domain $S$ we denote the $\mathcal{L}_{\infty}$-norm as $\|f\|_{\infty} = \sup_{s \in S} |f(s)|$. 

\section{Preliminaries and Problem Formulation}
\label{sec_prelim}
We introduce three optimal control problems. Equation \eqref{eq_jcc_in_words} defines a joint-chance constrained MDP, which we generalize in equation \eqref{eq_main_problem_formulation_using_expectation} to optimize over safe trajectories only. In Section \ref{sec_methods_cmpd} we describe how to rewrite \eqref{eq_main_problem_formulation_using_expectation} as a constrained MDP of form \eqref{eq_standard_constrained_mdp}, which is solvable through DP. 

\textbf{Markov Decision Process.} We refer to \cite{hernandez2012discrete} for a detailed treatment of MDPs. An MDP over a finite time horizon $N$ is a tuple $\mathcal{M}=(\mathcal{X}, \mathcal{U}, \mathcal{Q})$, where the state space $\mathcal{X}$ and the input space $\mathcal{U}$ are Borel subsets of complete separable metric spaces equipped with the $\sigma$-algebras $\mathcal{B}(\mathcal{X})$ and $\mathcal{B}(\mathcal{U})$, respectively. Given a state $x_k\!\in\!\mathcal{X}$ and an input $u_k\!\in\!\mathcal{U}$, the stochastic kernel $\mathcal{Q}\!:\!\mathcal{B}(\mathcal{X})\times\mathcal{X}\times\mathcal{U}\!\rightarrow\![0,1]$ describes the stochastic state evolution, leading to $x_{k+1}\sim \mathcal{Q}(\cdot|x_k,u_k)$.

For $k \in[N-1]$ we define the Borel space of histories up to time $k$ recursively as $\mathcal{H}_k = \mathcal{X}\times\mathcal{U} \times \mathcal{H}_{k-1}$, with $\mathcal{H}_0 = \mathcal{X}$; a generic element $h_k \in {\mathcal{H}_k}$ is of the form $h_k = (x_k, u_{k-1}, \mydots, x_1, u_0, x_0)$ \cite{hernandez2012discrete}.

A policy is a sequence $\pi=(\pi_0,\mydots,\pi_{N-1})$ of Borel-measurable stochastic kernels $\pi_k$, that, given $h_k$, assign a probability measure $\pi_k(\cdot|h_k)$ on the set $\mathcal{B}(\mathcal{U})$. We denote the set of policies by $\Pi$. A policy is called Markov if for all $k\in[N]$ $\pi_k(B|h_k)$ only depends on the last state, $x_k$, in the history for all $B\in\mathcal{B}(\mathcal{U})$; by abuse of notation we write $\pi_k(B|h_k)=\pi_k(B|x_k)$ in this case. Given a policy $\pi\in\Pi$ and an initial condition $x_0\in\mathcal{X}$, a unique probability measure $\mathbb{P}_{x_0}^\pi$ of histories is defined over $\mathcal{B}(\mathcal{H}_N)$ \cite{hernandez2012discrete}. We denote the expectation over $\mathbb{P}_{x_0}^\pi$ by $\mathbb{E}_{x_0}^\pi[\cdot]$.

\textbf{Joint Chance Constrained Optimal Control.} 
We consider the problem of finding policies for MDPs which minimize costs described by measurable functions $\ell_k\!:\!\mathcal{X}\!\times\mathcal{U}\!\rightarrow\![0,\infty), k\in[N-1]$ and $\ell_N\!:\!\mathcal{X}\!\rightarrow\![0,\infty)$ which we refer to as stage and terminal cost, respectively. Further, we require that the state remains within a safe set of states $\mathcal{X}^s\in\mathcal{B}(\mathcal{X})$. We denote the set of safe histories by $\mathcal{H}_N^s=\{h_N\in\mathcal{H}_N: x_0,\dots,x_N\in\mathcal{X}^s\}$, and write that a policy has a safety of $\alpha$ if 
\begin{align*}
    \mathbb{P}_{x_0}^{\pi}\left(x_0,\mydots,x_N\in\mathcal{X}^s\right)=\mathbb{E}_{x_0}^\pi\left[\prod_{k=0}^N\mathbb{1}_{\mathcal{X}^s}(x_k)\right]\geq \alpha.
\end{align*} 
A JCC optimal control problem minimizes costs subject to a high enough safety \cite{schmid2023computing}
\begin{subequations}
    \label{eq_jcc_in_words}
    \begin{align}
        &\inf_{\pi\in\Pi} &&\mathbb{E}_{x_0}^{\pi} \left[\ell_N(x_N)+\sum_{k=0}^{N-1}\ell_k(x_k,u_k)\right] \\
        &\text{s.t.} &&\mathbb{E}_{x_0}^\pi\left[\prod_{k=0}^N\mathbb{1}_{\mathcal{X}^s}(x_k)\right]\geq \alpha.
    \end{align}
\end{subequations}

\textbf{Problem Formulation: Joint Chance Constrained Safe-Optimal Control.}
To eliminate the incentive of the JCC MDP to gain a cost advantage by failing on safety, we generalize \eqref{eq_jcc_in_words} by introducing a measurable function $a:\mathcal{H}_N\rightarrow [0,1]$ in the cost objective and the constraint, 
\begin{subequations}
    \label{eq_main_problem_formulation_using_expectation}
    \begin{align}
        &\inf_{\pi,a} &&\hspace*{-.8em}\mathbb{E}_{x_0}^{\pi}\hspace*{-.1em}\!\left[a(h_N)\!\left(\!\ell_N(x_N)\!+\!\sum_{k=0}^{N-1}\!\ell_k(x_k,u_k)\!\right)\!\right] \label{eq_main_problem_formulation_using_expectation_objective} \\
        &\text{s.t.} &&\hspace*{-.8em}\mathbb{E}_{x_0}^{\pi}\hspace*{-.1em}\!\left[a(h_N)\!\prod_{k=0}^N\!\mathbb{1}_{\mathcal{X}^s}(x_k)\!\right]\!\geq\!\alpha, \label{eq_main_problem_formulation_using_expectation_constraint}
    \end{align}
\end{subequations}
where $\pi\in\Pi$. The function $a$ assigns a weight between zero and one to each trajectory $h_N\in\mathcal{H}_N$. 
Choosing $a(\cdot)=1$ recovers the JCC Problem \eqref{eq_jcc_in_words}. However, we can also select $a(\cdot)$ so that it assigns a value of zero to every unsafe trajectory. This leaves $\pi$ in \eqref{eq_main_problem_formulation_using_expectation} to minimize the cost of safe trajectories only. Figure \ref{fig_illustration_of_jcc_vs_ours} illustrates the conceptual difference between \eqref{eq_jcc_in_words} and \eqref{eq_main_problem_formulation_using_expectation} on a simple MDP with one time-step and two inputs. We numerically demonstrate in Section \ref{sec_num} that the function $a$ prevents JCC policies from violating safety constraints on purpose to reap the cost benefits.

However, solving \eqref{eq_main_problem_formulation_using_expectation} is non-trivial: The function $a$ and product in the constraint render the problem non-Markov. While \eqref{eq_jcc_in_words} can be cast as a constrained MDP on an augmented state space and then solved using DP \cite{schmid2023computing}, it is not obvious whether \eqref{eq_main_problem_formulation_using_expectation} permits a similar transformation. We confirm that this is the case in Section \ref{sec_methods}.

\textbf{Constrained Markov Decision Process.} A constrained MDP over a finite time horizon $N$ is a tuple $\mathcal{M}=(\mathcal{X}, \mathcal{U}, \mathcal{Q}, \ell_{0:N}, g_{0:N},\alpha)$, where $g_k\!:\!\mathcal{X}\!\times\mathcal{U}\!\rightarrow\![0,\infty), k\in[N-1]$ and $g_N\!:\!\mathcal{X}\!\rightarrow\![0,\infty)$ define an additional objective which is constrained by $\alpha$. Given an initial state $x_0\in\mathcal{X}$, the constrained MDP solves 
\begin{subequations}
    \label{eq_standard_constrained_mdp}
    \begin{align}
        &\inf_{\pi\in\Pi} &&\mathbb{E}_{x_0}^{\pi} \left[\ell_{N}(x_N)
        +\sum_{k=0}^{N-1}\ell_k(x_k,u_k)\right] \\
        &\text{s.t.} &&\mathbb{E}_{x_0}^{\pi}\left[g_{N}(x_{N})+\sum_{k=0}^{N-1}g_k(x_k,u_k)\right]\geq \alpha. 
    \end{align}
\end{subequations}
Notably, constrained MDPs are solvable using DP \cite{altman2021constrained}. 


\usetikzlibrary{calc, fit} 
\begin{figure}
    \centering
    \begin{tikzpicture}[
    node distance=1.5cm,
    /utils/exec={\definecolor{safegreen}{HTML}{009E73}\definecolor{unsafeorange}{HTML}{D55E00}\definecolor{safeblue}{HTML}{0072B2}},
    mynode/.style={
        circle,
        draw=safegreen, 
        fill=safegreen!10,
        inner sep=2pt,
        minimum size=6mm,
    },
    mynode_unsafe/.style={
        circle,
        draw=unsafeorange, 
        fill=unsafeorange!10,
        inner sep=2pt,
        minimum size=6mm,
    },
    mylabel/.style={
        font=\small,
        midway,
        auto,
        inner sep=1pt,
    },
    myfit/.style={
        rectangle,
        draw=safeblue, 
        dashed,
        inner sep=3mm,
        label={[font=\normalsize, safeblue, xshift=.75cm, yshift=-0.5cm, anchor=south west]90:Safe Set $\mathcal{X}^s$},
    },
    branch/.style={
        circle,
        fill=safegreen,
        inner sep=0pt,
        minimum size=4pt
    },
    prob/.style={
        font=\scriptsize,
        text=safegreen
    },
    probr/.style={
        font=\scriptsize,
        text=unsafeorange
    }
]

    \node[mynode] (node4) {$0$};
    \node[mynode, above=of node4] (node2) {$2$};
    \node[mynode_unsafe, left=of node2] (node3) {$1$};

    \draw[-stealth, unsafeorange, thick] (node4) -- (node3) 
        node[mylabel, left, xshift=-.2cm, text=black, pos=0.2] {$u_0=0$}
        node[mylabel, left, xshift=-.7cm, yshift=.4cm, text=black, pos=0.2] {$\ell_0(x_0,u_0)=0$};
    
    \draw[-stealth, safegreen, thick] (node4) -- (node2) 
        node[mylabel, right, text=black, xshift=.2cm, pos=0.2] {$u_0=1$}
        coordinate[midway] (midR)
        node[prob, right, pos=0.75] {0.5}
        node[mylabel, right, text=black, xshift=.2cm, yshift=.4cm, pos=0.2] {$\ell_0(x_0,u_0)=1$};

    \node[myfit, fit=(node4) (node2)] {};

    \node[branch] at (midR) {};

    \draw[->, unsafeorange, thick] (midR) to[out=90, in=0] 
        node[probr, left, pos=0.17] {0.5} 
        (node3.east);
\end{tikzpicture}
\caption{Illustration of an MDP with time-horizon $N=1$, state space $\mathcal{X}=\{0,1,2\}$, input space $\mathcal{U}=\{0,1\}$, dynamics $\mathcal{Q}(1|0,0)=\mathcal{Q}(1|1,\cdot)=\mathcal{Q}(2|2,\cdot)=1$, $\mathcal{Q}(1|0,1)=\mathcal{Q}(2|0,1)=0.5$, cost functions $\ell_0(\cdot,0)=0$, $\ell_0(\cdot,1)=1$, $\ell_1(\cdot)=0$, safe set $\mathcal{X}^s=\{0,2\}$, and initial condition $x_0=0$. The input $u_0=0$ results in an unsafe but zero-cost trajectory, while $u_0=1$ generates a cost of $1$ but yields a transition to $x_1=1$ and $x_1=2$ with probability $0.5$ each, resulting in a safe trajectory with probability $\SI{50}{\percent}$. Let $\alpha=0.25$. For Problem \eqref{eq_jcc_in_words}, choosing a policy $\pi=\{\pi_0\}$ described by $\pi_0(0|0)=\pi_0(1|0)=0.5$ yields $\SI{25}{\percent}$ safety and is uniquely optimal. In contrast, for \eqref{eq_main_problem_formulation_using_expectation}, choosing a policy with $\pi_0(1|0)=1$ and $a((x_1,u_0,x_0))=\frac{1}{2}\mathbb{1}_{\{2\}}(x_1)$ is equivalently optimal, leading to $\SI{50}{\percent}$ safety. In both cases, the cost incurred by safe trajectories is always one.}
\label{fig_illustration_of_jcc_vs_ours}
\end{figure}


\section{Methodology}
\label{sec_methods}
Problem \eqref{eq_main_problem_formulation_using_expectation} is non-Markov because the product over set-indicator functions in the constraint as well as the function $a$ depend on the full trajectory $h_N$. Consequently, algorithms like DP are not readily applicable. However, as we will show below, it is possible to cast \eqref{eq_main_problem_formulation_using_expectation} as a constrained MDP
\begin{align}
    \mathcal{M}^z=(\mathcal{Z},\mathcal{U}_{0:N},\mathcal{Q}^z_{0:N},f_{0:N+1}, g_{0:N+1}, \alpha)
\end{align}
amenable to DP. The state space of $\mathcal{M}^z$ is $\mathcal{Z}=\mathcal{X}\times\mathcal{C}$ with $\mathcal{C}=[0,\infty)$. We further denote by $\mathcal{Z}^s=\mathcal{X}^s\times \mathcal{C}$. The input space and transition kernel are time-varying: For $k\in[N-1]$, $\mathcal{U}_k=\mathcal{U}$, while $\mathcal{U}_N=\{0,1\}$. For $k\in[N-1]$ $\mathcal{Q}^z_k$ is
\begin{align*}
    \mathcal{Q}^z_k(B\!\times\!C|z_k,u_k) &= \mathcal{Q}(B|x_k,u_k)\delta_{c_k+\ell_k(x_k,u_k)}(C)
\end{align*}
for all $B\subseteq\mathcal{B}(\mathcal{X})$, $C\in\mathcal{B}(\mathcal{C})$, and $z_k=(x_k,c_k)\in\mathcal{Z}$, whereas $\mathcal{Q}^z_N(z|z,\cdot)=1$ for all $z\in\mathcal{Z}$. For $k\in[N-1]$, the cost functions $f_k(\cdot,\cdot)=f_{N+1}(\cdot)=0$, while $f_N(z_N,u_N)=u_N(c_N+\ell_N(x_N))$. The constraint functions are $g_k(\cdot,\cdot)=g_{N+1}(\cdot)=0$ for $k\in[N-1]$, and $g_N(z_N,u_N)=u_N\mathbb{1}_{\mathcal{Z}^s}(z_N)$. Given $z_0=(x_0,0)$, this leads to the optimization problem
\begin{subequations}
    \label{eq_constrained_mdp_formulation}
    \begin{align}
        &\inf_{\pi\in\Pi} &&\mathbb{E}_{z_0}^{\pi} \left[
        f_N(z_N,u_N)\right] \\
        &\text{s.t.} &&\mathbb{E}_{z_0}^{\pi}\left[g_N(z_N,u_N)\right]\geq \alpha.
        \label{eq_constrained_mdp_formulation_constraint}
    \end{align}
\end{subequations}
Any Markov policy $\pi^{\star}$ minimizing \eqref{eq_constrained_mdp_formulation} can be mapped into an optimal solution $(\pi=(\pi_0,\dots,\pi_{N-1}),a)$ of \eqref{eq_main_problem_formulation_using_expectation} via  
\begin{subequations}
    \begin{align}
        \pi_0(\cdot|h_0)&=\pi_0^{\star}(\cdot|(x_0,0)) \\ 
        \pi_k(\cdot|h_k)&=\!\pi^{\star}_k\left(\cdot\middle|\left(x_k,\sum_{j=0}^{k-1}\ell_j(x_j,u_j)\right)\right) \\
        a(h_N)&=\!\pi^{\star}_N\left(1\middle|\left(x_N,\sum_{j=0}^{N-1}\ell_j(x_j,u_j)\right)\right),
    \end{align}
    \label{eq_extracting_optimal_policy_from_cmdp}
\end{subequations}
for all $h_0\in\mathcal{H}_0$, $h_k\in\mathcal{H}_k$, $k=1\dots,N-1$, $h_N\in\mathcal{H}_N$.

\subsection{Derivation of Constrained MDP Formulation}
\label{sec_methods_cmpd}
To make the connection between \eqref{eq_main_problem_formulation_using_expectation} and \eqref{eq_constrained_mdp_formulation} explicit, we start our derivation from the Lagrange dual of \eqref{eq_main_problem_formulation_using_expectation},   
\begin{subequations}%
    \begin{align}%
        \sup_{\lambda\geq 0}\inf_{\pi,a}\mathbb{E}_{x_0}^{\pi}\Bigg[&a(h_N)\Bigg(\ell_N(x_N)\!+\!\sum_{k=0}^{N-1}\!\ell_k(x_k,u_k)\\ &+\lambda\left(\alpha-\prod_{k=0}^N\!\mathbb{1}_{\mathcal{X}^s}(x_k)\right)\Bigg)\!\Bigg].
    \end{align}
    \label{eq_main_problem_formulation_using_expectation_lagrange_dual}%
\end{subequations}
Assuming strong duality holds we can solve \eqref{eq_main_problem_formulation_using_expectation} through \eqref{eq_main_problem_formulation_using_expectation_lagrange_dual}. We could optimize over $\lambda$ via a line-search and attempt solving the inner minimization using DP. For the latter, we assign all terms that depend on $x_N$ to the terminal cost to maintain causality, leading to the value iteration
\begin{subequations}
\label{eq_dp_recursion_lagrange_dual}
\begin{align}
    V_N(x_N) &= \inf_{a} a(h_N)\Bigg(\ell_N(x_N)+\!\sum_{k=0}^{N-1}\!\ell_k(x_k,u_k) \label{eq_dp_recursion_lagrange_dual_terminal}\\ &\qquad \!+\!\lambda\left(\alpha-\prod_{k=0}^N\!\mathbb{1}_{\mathcal{X}^s}(x_k)\right)\Bigg), \nonumber
    \\
    V_k(x_k) &= \inf_{u_k\in\mathcal{U}}\int_{\mathcal{X}}V_{k+1}(x_{k+1})Q(dx_{k+1}|x_k,u_k),
\end{align}
\end{subequations}
with $k\in[N-1]$, $x_k,x_N\in\mathcal{X}$, and $a:\mathcal{H}_N\rightarrow[0,1]$. Unfortunately, the terminal cost now depends on the whole history $h_N$ instead of just $x_N$, again confirming that \eqref{eq_main_problem_formulation_using_expectation} is not Markov. Memorizing $h_N$ up to time $N$ is generally intractable for DP due to the curse of dimensionality.

However, $V_N$ can be evaluated with much less information: The product $\prod_{k=0}^N\mathbb{1}_{\mathcal{X}^s}(x_k)$ equates to one if the history is safe, and zero otherwise, allowing its value to be stored by a binary variable. Further, given a fixed value of $\lambda$ and irrespective of $\pi$, the infimum in \eqref{eq_dp_recursion_lagrange_dual_terminal} is attained by 
\begin{align}
    a(h_N) = \begin{cases}
        0 & \text{if } \ell_N(x_N)\!+\!\sum_{k=0}^{N-1}\!\ell_k(x_k,u_k)\\ &\quad\geq\lambda\left(\prod_{k=0}^N\!\mathbb{1}_{\mathcal{X}^s}(x_k)\!-\!\alpha\right), \\
        1 & \text{otherwise.}
    \end{cases}
    \label{eq_optimal_choice_of_a}
\end{align}
Notably $a(\cdot)=0$ for any unsafe trajectory, but more generally, if the safety and cost of the history $h_N$ would be known at time-step $N$, the terminal value function $V_N$ could be evaluated without knowledge of the history $h_N$ itself. We exploit this fact, and add artificial states to the MDP that capture this information, leading to our construction of $\mathcal{M}^z$ as follows:
\begin{enumerate}[
    label=\textbf{(\roman*)}, 
    leftmargin=*,            
    widest=iii,              
]
    \item \textbf{Terminal state indicates safety.} We assume that the state space is of the form $\mathcal{X}=\mathcal{X}^s\cup x^{\dagger}$. The unsafe set is represented by an absorbing state $x^{\dagger}$ satisfying $\mathcal{Q}(x^{\dagger}|x^{\dagger},u)=1$ for all $u\in\mathcal{U}$. Because $x^{\dagger}$ is absorbing, the product of the safe set indicator functions reduces to $\prod_{k=0}^{N}\mathbb{1}_{\mathcal{X}^s}(x_k)=\mathbb{1}_{\mathcal{X}^s}(x_N)$, which only depends on the terminal state $x_N$. This step can be done without loss of generality: If the unsafe set is not absorbing, one can append a binary variable to the state to keep track of whether the unsafe set has been visited, allowing a similar terminal constraint \cite{schmid2023computing}. 

    \item \textbf{Terminal state indicates cost.} We augment the state space with a cost state $c_k\in\mathcal{C}=[0,\infty)$. The cost state evolves deterministically according to $c_{k+1}=c_k+\ell_k(x_k,u_k)$ with $c_0=0$. The augmented state is $z_k=(x_k,c_k)\in\mathcal{Z}=\mathcal{X}\times\mathcal{C}$. Then, for a given trajectory $(z_0=(x_0,c_0),\dots,z_N=(x_N,c_N))$, the total cost $\ell_N(x_N)+\sum_{k=0}^{N-1}\ell_k(x_k,u_k)=\ell_N(x_N)+c_N$ is fully characterized by the terminal state $z_N$. 

    \item \textbf{Terminal time trajectory weighting.} This reduces $a$ to a function of $z_N=(x_N,c_N)$, where, by abuse of notation,
    \begin{align}
        a(z_N) = \begin{cases}
            0 & \text{if } \ell_N(x_N)\!+\!c_N\!\geq\!\lambda(\mathbb{1}_{\mathcal{X}^s}(x_N)\!-\!\alpha) \\
            1 & \text{otherwise.}
        \end{cases}
    \end{align}
   We embed the function $a$ within the MDP via an additional, virtual time-step $N+1$ and inputs $u_{N}\in\{0,1\}$. 
\end{enumerate}

Under this construction, formalized by $\mathcal{M}^z$, Problem \eqref{eq_main_problem_formulation_using_expectation} reduces to \eqref{eq_constrained_mdp_formulation}. Before proving this formally, we first introduce the following assumption to ensure that a solution to \eqref{eq_constrained_mdp_formulation} exists.
\begin{ass} \ 
    \begin{enumerate}
        \item Problem \eqref{eq_main_problem_formulation_using_expectation} is feasible.
        \item The action space $\mathcal{U}$ is compact, the safe set $\mathcal{X}^s$ is closed.
        \item The cost functions $\ell_{0},\mydots,\ell_N$ are non-negative and continuous over $\mathcal{X}$ and $\mathcal{U}$. 
        \item The kernel $\mathcal{Q}$ is weakly continuous, i.e., $\int_{\mathcal{X}}v(y)\mathcal{Q}(dy|\cdot,\cdot)\in C_b(\mathcal{X}\times\mathcal{U})$ for any $v\in C_b(\mathcal{X})$. 
    \end{enumerate}
    \label{ass_assumptions}
\end{ass}
The first assumption can be verified by computing the maximum safe policy \cite{abate2008probabilistic} and choosing $a(\cdot)=1$. 
\begin{thm}[Strong Duality]
    Under Assumption \ref{ass_assumptions}, the infimum in \eqref{eq_constrained_mdp_formulation} is attained by a Markov policy $\pi^{\star}\!\in\Pi$. Any policy $\pi^{\star}$ that attains the infimum can be mapped into an optimal solution $(\pi,a)$ for \eqref{eq_main_problem_formulation_using_expectation} via \eqref{eq_extracting_optimal_policy_from_cmdp}. \label{thm_strong_duality_main}
\end{thm}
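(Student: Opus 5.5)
The proof has two parts: existence of an optimal Markov policy for the constrained MDP \eqref{eq_constrained_mdp_formulation}, and the transfer of optimality to \eqref{eq_main_problem_formulation_using_expectation} through \eqref{eq_extracting_optimal_policy_from_cmdp}.

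\textbf{Existence.} I would first show that the constrained problem \eqref{eq_constrained_mdp_formulation} has a minimiser by using the occupation-measure (linear programming) formulation of finite-horizon constrained MDPs. Under Assumption~\ref{ass_assumptions}, the augmented model $\mathcal{M}^z$ has the following properties:
\begin{enumerate}
\item The kernels $\mathcal{Q}^z_k$ are weakly continuous, since $(x,c,u)\mapsto c+\ell_k(x,u)$ is continuous and $\mathcal{Q}$ is weakly continuous.
\item The action sets $\mathcal{U}_k$ are compact, with $\mathcal{U}_N=\{0,1\}$ finite.
\item The cost $f_N(z,u)=u(c+\ell_N(x))$ is non-negative and lower semicontinuous.
\item The constraint function $g_N=u\,\mathbb{1}_{\mathcal{Z}^s}$ is upper semicontinuous, because $\mathcal{Z}^s$ is closed.
\end{enumerate}
Item (iv) is exactly the direction needed for the set $\{\mathbb{E}[g_N]\ge\alpha\}$ to be closed under weak convergence.

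The set of achievable state-action occupation measures $(\mu_0,\dots,\mu_N)$ is characterised by linear flow constraints. To make this set tight, I would restrict attention to policies whose cost does not exceed the cost of a given feasible policy. Feasibility of \eqref{eq_main_problem_formulation_using_expectation} induces a feasible policy for \eqref{eq_constrained_mdp_formulation}, obtained by composing with the cost state and using $a$ as the terminal randomisation. The restriction then gives tightness in the $c$-coordinate via Markov's inequality. The $x$-coordinate is the delicate part: if $\mathcal{X}$ is not compact, I would add an inf-compactness assumption on the cost or rely on properties of $\mathcal{Q}$.

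With tightness in hand, Prokhorov's theorem, lower semicontinuity of the objective and upper semicontinuity of the constraint functional give attainment. The resulting optimal occupation measure is then disintegrated into Markov kernels $\pi^\star_k(du|z)$, which is the standard realisability result for occupation measures. I expect this compactness step to be the main obstacle, namely the unbounded cost coordinate combined with a possibly non-compact $\mathcal{X}$. I would first try to handle it by truncating the cost at the value of the feasible policy.

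\textbf{Equivalence of values.} Next I would show that the two problems have the same optimal value and that the map \eqref{eq_extracting_optimal_policy_from_cmdp} preserves both objective and constraint.
\begin{itemize}
\item \emph{From \eqref{eq_constrained_mdp_formulation} to \eqref{eq_main_problem_formulation_using_expectation}.} Take any policy $\pi^z$ for $\mathcal{M}^z$, and w.l.o.g.\ let $\pi^z$ be Markov in $z$; for general history-dependent policies the same argument goes through with $z$-histories. Since $c_k$ is a deterministic function of $h_k$, the laws of $x$-trajectories under $\pi^z$ and under the induced $\pi$ coincide. Moreover, $\mathbb{E}[f_N]=\mathbb{E}[a(h_N)(\ell_N+c_N)]$ with $c_N=\sum_k\ell_k$, and $\mathbb{E}[g_N]=\mathbb{E}[a(h_N)\mathbb{1}_{\mathcal{X}^s}(x_N)]=\mathbb{E}[a\prod_k\mathbb{1}_{\mathcal{X}^s}(x_k)]$ by the absorbing construction (i). Hence $\mathrm{val}\eqref{eq_main_problem_formulation_using_expectation}\le\mathrm{val}\eqref{eq_constrained_mdp_formulation}$, and the image of $\pi^\star$ is feasible with equal value.
\item \emph{From \eqref{eq_main_problem_formulation_using_expectation} to \eqref{eq_constrained_mdp_formulation}.} Given a general pair $(\pi,a)$ with $a$ depending on the whole history, I would build a policy for $\mathcal{M}^z$ that is history-dependent, which the class $\Pi$ allows. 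It uses $\pi_k(\cdot|h_k)$ for $k<N$ and $\pi_N(1|h^z_N)=a(h_N)$; this is well defined because $h_N$ is recoverable from the augmented history. This policy attains the same objective and constraint values, giving the reverse inequality.
\end{itemize}

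\textbf{Conclusion.} Combining the two parts, the optimal values coincide and the mapped $\pi^\star$ is optimal for \eqref{eq_main_problem_formulation_using_expectation}. The title ``strong duality'' can also be justified directly: problem \eqref{eq_constrained_mdp_formulation} is a linear program in occupation measures with a single constraint, and Slater-type conditions, or simply feasibility together with compactness, give a zero duality gap relative to \eqref{eq_main_problem_formulation_using_expectation_lagrange_dual}. This is consistent with the Lagrangian derivation above but is not needed for the stated claims.
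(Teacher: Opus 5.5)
Your transfer argument is sound and is essentially the paper's. Appending the accumulated cost is a bijection on histories, and occupation measures push forward through it. The history-dependent terminal randomisation $\pi_N^z(1|h_N^z)=a(h_N)$ handles the reverse direction.

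The gap is in the existence part. The paper does not build the compactness argument itself. It checks that $\mathcal{Q}^z_k$ is weakly continuous, that $\mathcal{U}_k$ is compact, and that $-f_N$ and $g_N-1$ are nonpositive and upper semicontinuous, and then invokes Lemma~\ref{lem_optimality_of_constrained_MDPs}. Your direct occupation-measure route could replace that citation. However, the step you yourself call the main obstacle is left open, and the device you propose for it does not work:
\begin{itemize}
\item Restricting to policies with $\mathbb{E}[f_N]\le K$ gives no tightness of $c_N$ via Markov's inequality. The reason is that $f_N=u_N(c_N+\ell_N(x_N))$ vanishes on $\{u_N=0\}$. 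A policy that routes all expensive histories to $u_N=0$ stays in your restricted class however heavy the tail of $c_N$ is.
\item For the $x$-coordinate you fall back on an inf-compactness hypothesis that Assumption~\ref{ass_assumptions} does not contain; compactness of $\mathcal{X}$ only enters in Assumption~\ref{ass_lipschitz_cont}.
\end{itemize}
As written, your proof therefore does not cover the stated theorem.

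The missing observation is that tightness is automatic here, in both coordinates. It follows from three facts you already have: the initial law is a Dirac, the action sets are compact, and the augmented kernel is weakly continuous (your item 1). Argue by induction on $k$:
\begin{itemize}
\item The admissible $\mu_0$ are the measures on $\mathcal{Z}\times\mathcal{U}_0$ with state marginal $\delta_{(x_0,0)}$. This set is compact because $\mathcal{U}_0$ is compact.
\item Suppose the admissible $\mu_k$ form a compact set. Consider the map $\nu\mapsto\int\mathcal{Q}^z_k(\cdot|z,u)\,\nu(dz,du)$. It is weakly continuous because $(z,u)\mapsto\int_{\mathcal{Z}}\gamma(y)\,\mathcal{Q}^z_k(dy|z,u)$ is bounded and continuous for every $\gamma\in C_b(\mathcal{Z})$. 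So the images of the admissible $\mu_k$ form a compact set of state marginals.
\item The measures on $\mathcal{Z}\times\mathcal{U}_{k+1}$ with such state marginals are again tight and closed, since $\mathcal{U}_{k+1}$ is compact.
\end{itemize}
Hence the set of tuples $(\mu_0,\dots,\mu_N)$ satisfying the flow constraints is weakly compact. This needs no compactness of $\mathcal{X}$ and no truncation of the cost. The $c$-coordinate needs no separate treatment, because it is part of $z$ and $\mathcal{Q}^z_k$ is weakly continuous in $(x,c,u)$.

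With this in place, the rest of your plan goes through. Lower semicontinuity of $\mu\mapsto\int f_N\,d\mu$, upper semicontinuity of $\mu\mapsto\int g_N\,d\mu$, and your disintegration step yield a Markov minimiser. This is exactly the Sch\"al-type compactness of strategic measures that the cited result of Feinberg and Piunovskiy packages.
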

The proof is relegated to Appendix \ref{sec_appendix_proof_of_strong_duality_main}.
While there exists rich literature on solving constrained MDPs \cite{altman2021constrained}, we only describe a DP based approach in detail. We then compare DP against RL as a state-of-the-art alternative in numerical experiments in Section \ref{sec_num}. 

\subsection{Approximate Policy Computation}
\label{sec_discretization}
If $\mathcal{M}$ (and consequently $\mathcal{M}^z$) involves continuous state and action spaces, the DP problem becomes infinite-dimensional. To reduce the state space to finitely many states, we approximate the MDP $\mathcal{M}^z$ via a gridding abstraction. To obtain approximation guarantees we impose the following assumptions in addition to Assumption \ref{ass_assumptions}.
\begin{ass} \
     \begin{enumerate}
        \item The state space $\mathcal{X}$ is compact 
        \item The transition kernel $\mathcal{Q}$ admits a Lipschitz continuous density $q$, such that for some $h_x>0$ \begin{align*}
            |q(x_{k+1}|x_k,u_k) - q(x_{k+1}|x_k',u_k)|\leq h_x||x_k-x_k'||
        \end{align*} for all $x_{k+1},x_k,x_k'\in\mathcal{X}, u_k\in \mathcal{U}$.
     \end{enumerate}    
    \label{ass_lipschitz_cont}
\end{ass}
The compactness of $\mathcal{X}$ and continuity of the cost functions $\ell_{0:N}$ imply that the maximum accumulated cost of any trajectory is bounded by $C_{\text{max}}=\sum_{k=0}^N\|\ell_k\|_{\infty}$, leading to the compact cost space $\mathcal{C}=[0,C_{\text{max}}]$. For completeness, we redefine 
\begin{center}
    {\small $\displaystyle 
    \mathcal{Q}^z_k(B\!\times\!C|z_k,u_k) = \mathcal{Q}(B|x_k,u_k)\delta_{\min(C_{\text{max}},c_k+\ell_k(x_k,u_k))}(C).
    $}
\end{center}
Note that no $N$-step trajectory ever incurs higher costs than $C_{\text{max}}$ by definition, and Theorem \ref{thm_strong_duality_main} remains valid under this construction, see Appendix \ref{sec_appendix_comment}. 

\textbf{Gridding Abstraction.} Consider a finite disjoint partitioning of the cost, state and action spaces 
\begin{align*}
    \mathcal{X}^s\!&=\!\cup_{j=1}^{M_x}{\mathcal{X}}_j,\!&\!\mathcal{X}_j\!&\in\!\mathcal{B}(\mathcal{X}),\!&\!\mathcal{X}^r&\!=\!\{{{x}}^r_j\!\in\!{\mathcal{X}}_j\}_{j=1}^{M_x}\!\cup\!x^{\dagger}, \\
    \mathcal{C}\!&=\!\cup_{j=1}^{M_c}\mathcal{C}_j,\!&\!\mathcal{C}_j\!&\in\!\mathcal{B}(\mathcal{C}),\!&\!\mathcal{C}^r&\!=\!\{{{c}}^r_j\!\in\!\mathcal{C}_j\}_{j=1}^{{M_c}},\! \\
    \mathcal{U}\!&=\!\cup_{j=1}^{M_u}\mathcal{U}_j,\!&\!\mathcal{U}_j\!&\in\!\mathcal{B}(\mathcal{U}),\!&\!\mathcal{U}^r&\!=\!\{{{u}}^r_j\!\in\!\mathcal{U}_j\}_{j=1}^{M_u},
\end{align*}
where $\mathcal{X}^r, \mathcal{C}^r, \mathcal{U}^r$ are collections of representative points of each partition. As in Section \ref{sec_methods_cmpd} we construct the product of the cost and state spaces to cover $\mathcal{Z}=\left(\bigcup_{i=1}^{M_x}\bigcup_{l=1}^{M_c}\mathcal{X}_i\times \mathcal{C}_l\right)\cup\left(\bigcup_{l=1}^{M_c}\{x^{\dagger}\}\times \mathcal{C}_l\right)$. We denote each of the $M_z=(M_x+1)M_c$ products by $\mathcal{Z}_j$ with associated state $z_j^r\in\mathcal{Z}_j$, $j=1,\dots,M_z$, and $\mathcal{Z}^r = \{z_j^r\}_{j=1}^{M_z}$. Of course, beyond continuous state spaces, the same construction allows reducing large finite state spaces to smaller ones by grouping states together.

We now construct a finite MDP 
\begin{align*}
    {\mathcal{M}}^{rz}=({\mathcal{Z}}^r, {\mathcal{U}}^r_{0:N}, \mathcal{Q}^{rz}_{0:N}, f_{0:N+1}, g_{0:N+1}, \alpha),
\end{align*}
where $\mathcal{Q}^{rz}_k(z^r_i|z,{u})=\mathcal{Q}^z_k(\mathcal{Z}_i|{z},u)$ for all $i=1,\dots,M_z$, $z\in\mathcal{Z}$, $u\in\mathcal{U}$, $k\in[N]$, and $\mathcal{U}_k^r=\mathcal{U}^r$ for $k\in[N-1]$ and $\mathcal{U}^r_{N}=\mathcal{U}_N$. The interpretation is that from any state within a partition $\mathcal{Z}_j$ the transition probabilities are the same as from the state $z^r_j$, the inputs are restricted to the finitely many representative inputs, and instead of the actual stage cost one incurs the stage cost of the associated partition's representative states and actions.

\textbf{Policy Computation.}
We solve Problem \eqref{eq_constrained_mdp_formulation} on the MDP $\mathcal{M}^{rz}$ through its Lagrange dual
\begin{align}
\label{eq_constrained_mdp_formulation_lagrangian}
    &\max_{\lambda\geq 0}\min_{\pi\in\Pi} \ \mathbb{E}_{z_0}^{\pi} \Bigg[f_{N}(z_{N},u_N)\!+\!\lambda\left(\alpha\!-\!g_{N}(z_{N},u_N)\right)\Bigg],
\end{align}
where $\lambda$ is the dual multiplier. The inner minimization is solved via DP, while the outer maximization is performed through bisection on $\lambda$. Constructing mixtures of policies obtained at the upper and lower bound of the bisection ensures exponential convergence to the optimal solution over the number of bisection steps, see \cite{schmid2023computing} for details. 

The policy is executed on the original MDP ${\mathcal{M}}$ by applying the action $u_k\sim\pi_k(\cdot|z_j^r)$ when $z_k\in\mathcal{Z}_j$. The cost state $c_k$ is propagated virtually in the controller software and, for technical reasons detailed in Appendix \ref{sec_appendix_proof_of_error_bounds}, we always increase the cost state $c_k$ by the cost of the associated representative states instead of the actually incurred costs.

\textbf{Error Bound}. Note that the gridding abstraction only builds an approximation of the original system. Hence, when solving for a safety of $\alpha$ on the discretized MDP $\mathcal{M}^{rz}$ and applying the policy on the continuous MDP $\mathcal{M}$ one might expect the probability of safety to differ from $\alpha$. In what follows we bound this difference. 

We define the state-grid-size 
\begin{align}
    \Delta_x = \max_{1\leq j \leq M_x}\sup_{{x}_1,{x}_2\in {\mathcal{X}}_j}||{x}_1-{x}_2||,
\end{align}
and the maximum Lebesgue measure of the sets $\mathcal{X}_j$, $j=1,\mydots,M_x$ as 
\begin{align}
    \overline{\mu}=\max_{1\leq j \leq M_x}\int_{\mathcal{X}_j}1dx.
\end{align}
\begin{thm}[Safety Error Bounds]\label{prop_error_bounds}
     Under Assumption \ref{ass_assumptions} and \ref{ass_lipschitz_cont}, a feasible policy of Problem \eqref{eq_constrained_mdp_formulation} yields at least a safety of $\alpha -  N h_xM_x\overline{\mu}\Delta_x$ when applied to the original MDP $\mathcal{M}$.
\end{thm}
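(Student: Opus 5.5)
We compare the safety of a fixed feasible policy $\pi$ of \eqref{eq_constrained_mdp_formulation} on the abstraction $\mathcal{M}^{rz}$ with its safety when executed on $\mathcal{M}$. The comparison runs backward in time through the value functions. Fix $\pi=(\pi_0,\dots,\pi_N)$. On $\mathcal{M}^{rz}$, define $W^r_N(z^r)=\sum_{u\in\{0,1\}}\pi_N(u|z^r)\,u\,\mathbb{1}_{\mathcal{Z}^s}(z^r)$ and, for $k=N-1,\dots,0$,
\begin{align*}
W^r_k(z^r)=\int_{\mathcal{U}}\sum_{i=1}^{M_z}W^r_{k+1}(z_i^r)\,\mathcal{Q}^z_k(\mathcal{Z}_i|z^r,u)\,\pi_k(du|z^r).
\end{align*}
On $\mathcal{M}$, with the executed controller (actions drawn from $\pi_k(\cdot|z_j^r)$ whenever $z_k\in\mathcal{Z}_j$, and the cost state advanced with representative costs), the constraint value $W_k(z)$ is defined by the same recursion. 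The only difference is that the next state is integrated against the true density $q(\cdot|x,u)$ from the actual $x$, rather than from the representative $x^r$.

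Because the cost state is propagated using representative costs, the cost component evolves identically in both systems. Hence the cell index of $z_{k+1}$ in the $\mathcal{C}$-direction agrees for both. This is exactly why the paper insists on that technical choice. Consequently $W_k$ is piecewise constant on cells, and $W^r_k(z^r_j)$ and $W_k(z)$ for $z\in\mathcal{Z}_j$ use the same action distribution and the same cost transition. They differ only through the state transition density, evaluated at $x$ versus $x^r_j$.

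The key one-step estimate is the following. For $z\in\mathcal{Z}_j$ with representative $z_j^r$,
\begin{align*}
|W_k(z)-W^r_k(z^r_j)|\le \sup_{u}\Big|\sum_{i}W_{k+1}(\mathcal{Z}_i)\int_{\mathcal{X}_i}\big(q(y|x,u)-q(y|x^r_j,u)\big)dy\Big| + \|W_{k+1}-W^r_{k+1}\|_\infty .
\end{align*}
The first term is handled as follows. Values lie in $[0,1]$, and the unsafe absorbing state contributes nothing to safety. Lipschitz continuity gives $\int_{\mathcal{X}_i}|q(y|x,u)-q(y|x_j^r,u)|dy\le h_x\overline{\mu}\Delta_x$, using $\|x-x^r_j\|\le\Delta_x$ for $x,x_j^r\in\mathcal{X}_j$. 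Summing over the $M_x$ safe cells bounds the first term by $M_xh_x\overline{\mu}\Delta_x$. At $k=N$ the two functions coincide, since the terminal step is deterministic and identical.

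Inducting backward from $k=N$ to $k=0$ gives $\|W_0-W^r_0\|_\infty\le N h_xM_x\overline{\mu}\Delta_x$. At $z_0=(x_0,0)$, feasibility gives $W^r_0\ge\alpha$, so the safety on $\mathcal{M}$ satisfies $\mathbb{E}[g_N]\ge \alpha-Nh_xM_x\overline{\mu}\Delta_x$. Moreover, $\mathbb{E}[g_N]=\mathbb{E}[a\,\mathbb{1}_{\mathcal{X}^s}(x_N)]\le\mathbb{P}(\text{safe})$ because $a\le 1$, so this lower bound also bounds the probability that the trajectory is safe. Together with item (i), which reduces the indicator product to the terminal indicator, this yields the claim.

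The main obstacle is the bookkeeping that makes the closed-loop value on $\mathcal{M}$ well defined and constant on cells. One must check that the controller's virtual cost state coincides with the abstraction's cost-cell transition, including the truncation at $C_{\max}$. One must also handle the fact that the density bound concerns only the $x$-component, while the $\mathcal{C}$-component is a shared Dirac. If the cell constancy fails, I would instead compare $W_k$ at $z$ and $z^r_j$ by a coupling argument that keeps the cost state identical.
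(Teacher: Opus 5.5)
One intermediate claim is false and must be dropped: for $k<N$, $W_k$ is \emph{not} piecewise constant on cells. For $z=(x,c)$ with $x\in\mathcal{X}_j$, the action law $\pi_k(\cdot|z^r_j)$ and the virtual cost update are constant across the cell. The successor law $q(\cdot|x,u)$, however, depends on the true $x$, so $W_k(\cdot,c)$ varies over $\mathcal{X}_j$. Bounding exactly this variation is the content of the paper's Lemma~\ref{lem_value_function_lipschitz}. Only $W_N$ is cell-constant, which is all your base case uses.

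As a consequence, $W_{k+1}(\mathcal{Z}_i)$ in your one-step estimate is undefined, and $\|W_{k+1}-W^r_{k+1}\|_\infty$ has to be read as $\sup_{(y,c)}|W_{k+1}(y,c)-W^r_{k+1}(\xi(y),c)|$. No coupling argument is needed to repair this. Let $c'_u$ be the common updated cost state. Then $W_k(z)-W^r_k(z^r_j)$ is the $\pi_k(\cdot|z^r_j)$-average of $T_1+T_2$, where
\begin{align*}
T_1&=\sum_{i=1}^{M_x}\int_{\mathcal{X}_i}W_{k+1}(y,c'_u)\bigl(q(y|x,u)-q(y|x^r_j,u)\bigr)\,dy,\\
T_2&=\sum_{i=1}^{M_x}\int_{\mathcal{X}_i}\bigl(W_{k+1}(y,c'_u)-W^r_{k+1}(x^r_i,c'_u)\bigr)\,q(y|x^r_j,u)\,dy.
\end{align*}
The bound $|T_1|\le M_xh_x\overline{\mu}\Delta_x$ uses only $0\le W_{k+1}\le 1$, and $|T_2|$ is bounded by the supremum above. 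Your induction and final bound therefore go through unchanged, with $W^r_0$ evaluated at $\xi(z_0)$.

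With this repair the argument is correct, and its analytic core is the paper's. Your intermediate term $\int W_{k+1}\,q(\cdot|x^r_j,u)$ is the true closed-loop value at the representative point. So your split coincides with the paper's split of $|V_k(h_k)-V^r_k(\xi(h_k))|$ into $|V_k(h_k)-V_k(\xi(h_k))|$ and $|V_k(\xi(h_k))-V^r_k(\xi(h_k))|$ in Lemma~\ref{lem_boundedness_of_error_to_Mr}.

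The bookkeeping differs. The paper works on histories with the plain safety recursion of Lemma~\ref{lem_invariance_probability_computation} and never carries $a$. It introduces a second abstraction $\mathcal{M}^r$ without cost state. It then uses a push-forward argument (Lemma~\ref{lem_occupation_measure_pushforward_r}, Corollary~\ref{cor_same_safety_Mrz_Mr}) to turn feasibility on $\mathcal{M}^{rz}$ into safety at least $\alpha$ on $\mathcal{M}^r$, and finally compares $\mathcal{M}^r$ with $\mathcal{M}$.

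You instead compare $\mathcal{M}$ with $\mathcal{M}^{rz}$ directly via a Markov recursion on $(x,c)$, carrying $\pi_N(1|\cdot)$ along and using $a\le 1$ only at the end. This removes $\mathcal{M}^r$ and the push-forward step. It also gives slightly more: the executed pair $(\pi,a)$ satisfies the weighted constraint \eqref{eq_main_problem_formulation_using_expectation_constraint} up to $Nh_xM_x\overline{\mu}\Delta_x$, not merely the safety bound.

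On the other hand, you fix a Markov $\pi$. That matches the execution rule in Section~\ref{sec_discretization} but is narrower than ``a feasible policy''. The paper's history formulation covers general $\pi^{rz}$, and your argument extends to that case by indexing values with the cell history, as the paper does.
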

\smallskip
The proof is found in Appendix \ref{sec_appendix_proof_of_error_bounds} and is based on the analysis in \cite{abate2010approximate}. A safety of $\alpha$ on $\mathcal{M}$ can be consequently achieved by tightening \eqref{eq_constrained_mdp_formulation_constraint} to a safety of $\alpha + N h_xM_x\overline{\mu}\Delta_x$ on $\mathcal{M}^{rz}$. This requires that $Nh_xM_x\overline{\mu}\Delta_x<1-\alpha$, which can be ensured by refining the partition. For example, if we impose a regular grid structure $\overline{\mu}M_x$ is roughly independent of the grid size $\Delta_x$ and equal to the Lebesgue measure of the compact set $\mathcal{X}^s$, so the term decreases proportionately to $\Delta_x$ ($h_x$ and $N$ are of course independent of the grid size). Note, however, that as the grid size decreases, the number of partitions (and hence the necessary computation) increases exponentially in the dimension of the continuous space $\mathcal{X}$.

\begin{figure*}[!t]
    \centering
    \begin{subfigure}{0.24\linewidth}
        \centering
        \includegraphics[width=\linewidth]{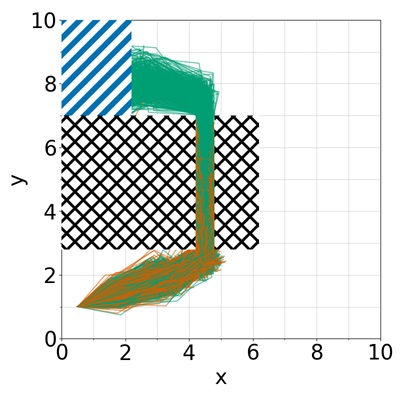}
    \end{subfigure}
    \begin{subfigure}{0.24\linewidth}
        \centering
        \includegraphics[width=\linewidth]{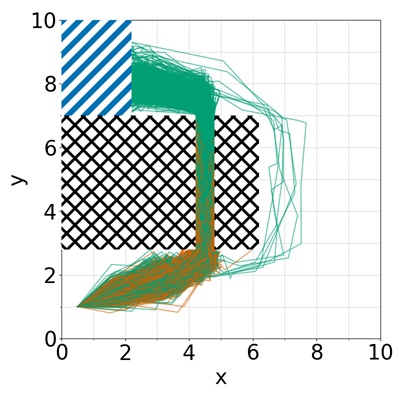}
    \end{subfigure}
    \begin{subfigure}{0.24\linewidth}
        \centering
        \includegraphics[width=\linewidth]{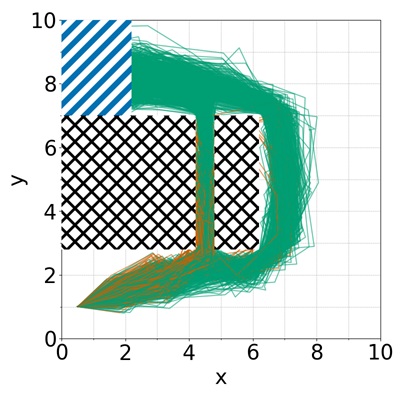}
    \end{subfigure}
    \begin{subfigure}{0.24\linewidth}
        \centering
        \includegraphics[width=\linewidth]{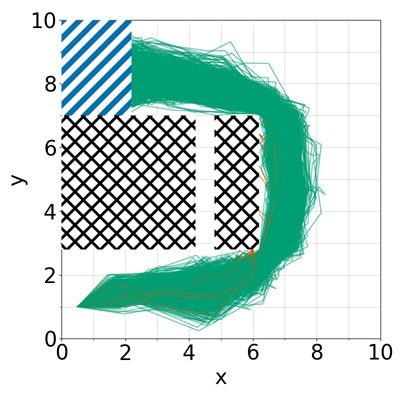}
    \end{subfigure}
    \caption{Comparison of $1000$ Monte-Carlo simulations using policies computed for \eqref{eq_constrained_mdp_formulation_lagrangian} via DP with $\lambda\in\{14,16,18,20\}$ from left to right. The target is blue striped, the unsafe set black hashed. Green trajectories satisfy the reach-avoid specification, red trajectories do not. The initial state is $(0.5,1)$.}
    \label{fig_dp_trajectories_different_lambdas}
\end{figure*}

\subsection{Resolving Ambiguity}
\label{sec_method_ambiguity}
We return to the interpretation of $a$ in \eqref{eq_main_problem_formulation_using_expectation} as (de-)selecting individual trajectories. If $a(\cdot)=0$, the respective trajectory does not affect the cost, nor the constraint objective. Note that the choice of $a$ in \eqref{eq_optimal_choice_of_a} also maps safe trajectories to zero whenever they exceed a certain cost. This is intentional: Strictly setting $a$ to zero for all unsafe trajectories and one otherwise incentivises policies to reduce objective \eqref{eq_main_problem_formulation_using_expectation_objective} by making high-cost trajectories unsafe, potentially at the cost of worsening performance for safe trajectories. In contrast, if $a$ in \eqref{eq_optimal_choice_of_a} is zero, then the cost and constraint objective of \eqref{eq_main_problem_formulation_using_expectation} are oblivious to the respective trajectories' cost and safety. Hence, one can choose $\pi$ to keep any such trajectory safe without impacting \eqref{eq_main_problem_formulation_using_expectation}. 

This implies ambiguity in the solution of \eqref{eq_main_problem_formulation_using_expectation} where policies with different levels of safety qualify as optimal, see Fig.~\ref{fig_illustration_of_jcc_vs_ours}. However, it is possible to retrieve the safest policy from the set of optimal solutions for \eqref{eq_main_problem_formulation_using_expectation}: Let $c^{\star}$ denote the optimal objective in \eqref{eq_constrained_mdp_formulation} and solve 
\begin{subequations}
    \begin{align}
        &\sup_{\pi\in\Pi} &&\mathbb{E}_{z_0}^{\pi}\left[g_{N}(z_{N},u_N)\right] \\
        &\text{s.t.} &&\mathbb{E}_{z_0}^{\pi} \left[f_{N}(z_{N},u_N)\right]=c^{\star}.
    \end{align}
    \label{eq_constrained_mdp_formulation_refining}
\end{subequations}
\begin{prop}[Optimal Markov Policies]
    Under Assumption \ref{ass_assumptions}, the supremum in \eqref{eq_constrained_mdp_formulation_refining} is attained by a Markov policy.    \label{prop_constrained_mdp_formulation_refining}
\end{prop}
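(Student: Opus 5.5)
The plan is to reduce \eqref{eq_constrained_mdp_formulation_refining} to a constrained MDP of the form \eqref{eq_standard_constrained_mdp} on $\mathcal{M}^z$ and then reuse the existence argument behind Theorem~\ref{thm_strong_duality_main}.

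\textbf{Step 1: replace the equality by an inequality.} By definition $c^{\star}$ is the infimum of $\mathbb{E}_{z_0}^{\pi}[f_N(z_N,u_N)]$ over all policies $\pi$ that are feasible for \eqref{eq_constrained_mdp_formulation}. Every $\pi$ that is feasible for \eqref{eq_constrained_mdp_formulation_refining} with $\mathbb{E}_{z_0}^{\pi}[g_N]<\alpha$ can be disregarded. Indeed, the attaining policy $\pi^\star$ from Theorem~\ref{thm_strong_duality_main} is feasible for \eqref{eq_constrained_mdp_formulation_refining} and has value at least $\alpha$. Hence the supremum is unchanged if we add the constraint $\mathbb{E}[g_N]\ge\alpha$. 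On this enlarged feasible set we have $\mathbb{E}[f_N]\ge c^\star$ automatically. The equality can therefore be replaced by $\mathbb{E}_{z_0}^{\pi}[f_N]\le c^{\star}$. This turns \eqref{eq_constrained_mdp_formulation_refining} into
\[
\sup_{\pi}\ \mathbb{E}_{z_0}^{\pi}[g_N] \quad \text{s.t.}\quad \mathbb{E}_{z_0}^{\pi}[f_N]\le c^\star,\ \ \mathbb{E}_{z_0}^{\pi}[g_N]\ge\alpha .
\]
The second constraint is in fact redundant for the supremum. Once the sup is restricted to $\mathbb{E}[f_N]\le c^\star$, it is at least the value of $\pi^\star$, which is at least $\alpha$. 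We can therefore drop it. What remains is a constrained MDP on $\mathcal{M}^z$: the objective is $-g_N$ (minimised), and the single expected-cost constraint is $-f_N\ge -c^\star$. It has the same state space, action spaces and kernels as \eqref{eq_constrained_mdp_formulation}, with only the roles of $f$ and $g$ exchanged.

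\textbf{Step 2: existence of an optimal Markov policy.} We follow the same route as the proof of Theorem~\ref{thm_strong_duality_main}, namely the occupation-measure / convex-analytic approach of \cite{altman2021constrained,hernandez2012discrete}. For finite-horizon constrained MDPs, every policy's marginal state-action distributions can be reproduced by a Markov policy. Both objective and constraint are linear functionals of the time-$N$ state-action marginal. This gives the reduction to Markov policies without loss of optimality.

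For attainment, we show that the set of achievable occupation measures is weakly compact and that the constraint set is closed. Compactness holds because $\mathcal{U}$ and $\{0,1\}$ are compact and the kernel is weakly continuous (Assumption~\ref{ass_assumptions}). The cost coordinate can be confined to a compact set: by the argument of Appendix~\ref{sec_appendix_comment} the cost is bounded by $C_{\max}$ on the relevant histories, or else we use tightness of the deterministic cost increments. The constraint set $\{\mathbb{E}[f_N]\le c^\star\}$ is closed because $f_N(z,u)=u(c+\ell_N(x))$ is continuous and nonnegative, hence lower semicontinuous. It is nonempty, since it contains $\pi^\star$.

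\textbf{Main obstacle: semicontinuity of the objective.} The objective $g_N=u_N\mathbb{1}_{\mathcal{Z}^s}$ is an indicator, not a continuous function. We need it to be upper semicontinuous to maximise over a compact set. Closedness of $\mathcal{X}^s$ gives exactly this: the indicator of a closed set is u.s.c., so $\mu\mapsto\int g_N\,d\mu$ is weakly u.s.c. by the portmanteau theorem. We also use the absorbing-state structure $\mathcal{X}=\mathcal{X}^s\cup x^\dagger$, under which $x^\dagger$ is isolated, so $\mathcal{Z}^s$ is closed. An u.s.c. functional attains its supremum on a compact set, at some occupation measure. The Markov policy inducing that measure then attains the supremum in \eqref{eq_constrained_mdp_formulation_refining}.
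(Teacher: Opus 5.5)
Your argument is correct, but it takes a different route from the paper. The paper keeps the equality as the two inequalities $\mathbb{E}_{z_0}^{\pi}[f_N]\le c^\star$ and $\mathbb{E}_{z_0}^{\pi}[f_N]\ge c^\star$ in \eqref{eq_constrained_mdp_formulation_refining_inequalities}. It gets feasibility from the attainment of $c^\star$ and then applies the Feinberg--Piunovskiy result (Lemma~\ref{lem_optimality_of_constrained_MDPs}) as in Lemma~\ref{lem_existence_and_markov_property_of_optimal_solution}. You instead use the minimality of $c^\star$ to discard the lower bound. Any maximiser of the relaxed problem has value at least $\mathbb{E}_{z_0}^{\pi^\star}[g_N]\ge\alpha$, so it is feasible for \eqref{eq_constrained_mdp_formulation}, so its cost is at least $c^\star$ and hence exactly $c^\star$. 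You then prove attainment by hand: compactness of occupation measures, upper semicontinuity of $g_N$, and closedness of the sublevel set of the lower semicontinuous $f_N$. Your reduction is more than cosmetic. Lemma~\ref{lem_optimality_of_constrained_MDPs} requires every constraint to read $\mathbb{E}[r]\ge\alpha_i$ with $r$ nonpositive and upper semicontinuous. The paper's constraint $\mathbb{E}[f_N]\ge c^\star$ has $r=f_N\ge 0$, which can be shifted into that form only if $f_N$ is bounded. That holds for the compact cost space $[0,C_{\max}]$ under Assumption~\ref{ass_lipschitz_cont}. It is not guaranteed under Assumption~\ref{ass_assumptions} alone, where $\mathcal{C}=[0,\infty)$ and $\ell_N$ may be unbounded, and the paper's proof does not address this. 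Your version needs only lower semicontinuity of $f_N$, which is the natural direction for a $\le$ constraint. The price is the hand-made compactness step, which you can skip: apply Lemma~\ref{lem_optimality_of_constrained_MDPs} to your single-constraint problem with $r^0_N=g_N-1$ and $r^1_N=-f_N$ (both nonpositive and upper semicontinuous) and $\alpha_1=-c^\star$. If you keep the direct route, note that the $C_{\max}$ bound of Appendix~\ref{sec_appendix_comment} relies on compactness of $\mathcal{X}$, which this proposition does not assume. So the compactness claim has to rest on the tightness argument, using weak continuity of $\mathcal{Q}$, compactness of $\mathcal{U}$, and continuity of $\ell_k$.
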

See Appendix \ref{sec_appendix_attainability_refined} for the proof.

\section{Numerical Experiments}
\label{sec_num}
We evaluate our approach on a unicycle system modeled by a two-dimensional state space $(x_{1,k},x_{2,k})\in[0,10]\times[0,10]$, two inputs $(u_{1,k},u_{2,k})\in[0,3]\times[0,2\pi]$, representing the velocity and heading angle, and dynamics 
\begin{align*}
    \begingroup
    \setlength\arraycolsep{3pt}x_{k+1}\!=\!x_k\!+\!u_{1,k}\begin{bmatrix}\cos(u_{2,k}\!+\!w_{2,k}) & \sin(u_{2,k}\!+\!w_{2,k})\end{bmatrix}^{\top}\!+\!w_{1,k},
    \endgroup
\end{align*}
where $w_{1,k}\sim\mathcal{N}([0 \ 0]^{\top},\text{diag}(0.025,0.025))$ is an additive disturbance and $w_{2,k}\sim\mathcal{N}(0,0.01)$ steering actuation noise. The objective is to safely reach targets in cluttered environments with as little velocity actuation $\sum_{k=0}^{N-1}u_{1,k}$ as possible, but with probability at least $\alpha$ over $N=15$ time-steps. While this is a standard stochastic control problem, we invite the reader to think about the expected optimal behaviour of the controller. 

Generally, one would expect a risk-aware control behaviour in which trajectories potentially graze unsafe sets if this allows reaching the target with little actuation. The smaller $\alpha$, the riskier these manoeuvres as violations are allowed to occur more likely. We will demonstrate that our formulation \eqref{eq_main_problem_formulation_using_expectation} matches this intuition, while standard JCC optimal control problems \eqref{eq_jcc_in_words} do not. 

To provide clearer visualizations (avoid safe trajectories cutting corners) we define trajectories as safe if the line between any consecutive states does not intersect the unsafe set. While the previous discussion focused on safety-constraints, we will consider probabilistic reach-avoid constraints with absorbing unsafe and target sets for the numerical examples. Note that reach-avoid specifications with absorbing unsafe and target sets are easily cast as safety specifications by reducing the safe set to the target set at the terminal time. The code used to generate all numerical results has been executed on a Ryzen 9 9950X CPU at $\SI{4.3}{\giga\hertz}$, 32 GB RAM, and an Nvidia RTX 5070 Ti GPU and is openly accessible on \url{https://github.com/NiklasSchmidResearch/JCC_Safe_Optimal}.

\subsection{Dynamic Programming based Experiments}
We first approach Problem \eqref{eq_main_problem_formulation_using_expectation} via DP. To render the problem finite dimensional, we grid the state space into partitions of size $0.2\times 0.2$, and generate $\mathcal{U}^r$ by uniformly sampling a set of $50$ input pairs from $\mathcal{U}$. For each action $u_k\in\mathcal{U}^r$, we exploit the translation- and time-invariance of the dynamics to simulate $i\in[500]$ transitions from the origin $x_k=0$ to states $x_{k+1}=x^i$. In the DP algorithm, we empirically evaluate expectations over the transition probabilities given any $x_k$ and $u_k$ through the samples $(x+x^i, y+y^i)$. Further, the cost space is discretized into partitions of size $0.2$, leading to a combined state space of $50\times 50 \times 225$ states. Solving the inner minimization in \eqref{eq_constrained_mdp_formulation_lagrangian} for a fixed value of $\lambda$ took between $80$ and $110$ seconds.

\begin{figure}
    \centering
    \definecolor{safegreen}{HTML}{009E73}
    
    \begin{tikzpicture}
    \begin{axis}[
        xlabel={Safety},
        ylabel={Expected Cost},
        legend style={
            at={(0.435,1.45)}, 
            anchor=north,
            legend columns=2, 
            font=\small
        },
        grid=major,
        width=\linewidth,
        height=4cm,
    ]

    \addplot[color=safegreen, ultra thick, solid] table [x=Safety, y=SafeCost] {data_pareto_jcc_safe.tex};
    \addlegendentry{Safe-Optimal (Safe Traj.)}

    \addplot[color=black, ultra thick, solid] table [x=Safety, y=AllCost] {data_pareto_jcc_safe.tex};
    \addlegendentry{Safe-Optimal (All Traj.)}

    \addplot[color=safegreen, ultra thick, dashed] table [x=Safety, y=SafeCost] {data_pareto_jcc_classic.tex};
    \addlegendentry{Classic JCC (Safe Traj.)}

    \addplot[color=black, ultra thick, dashed] table [x=Safety, y=AllCost] {data_pareto_jcc_classic.tex};
    \addlegendentry{Classic JCC (All Traj.)}

    \end{axis}
    \end{tikzpicture}
    \caption{Pareto Front: Expected cost of safe / all trajectories against safety for the classical JCC problem \eqref{eq_jcc_in_words} and our safe-optimal approach \eqref{eq_main_problem_formulation_using_expectation} for the slit environment, computed using $1000$ Monte-Carlo simulations of each policy for different values of $\lambda$.}
    \label{fig_pareto_costs}
\end{figure}

\textbf{Performance and Safety is Balanced via $\bm{\lambda}$.}
We first treat the dual variable $\lambda$ as a parameter and analyse the effect of its value on a reach-avoid problem with a target in the top-left corner and an unsafe set with a slit, see Fig.~\ref{fig_dp_trajectories_different_lambdas}. For simplicity, we call any trajectory that satisfies the reach-avoid objective safe, and any trajectory that enters the unsafe set or does not reach the target within $15$ time-steps unsafe. Generally, two routes to the target are available. The shorter route leads through the thin slit, but has high risk of entering the unsafe set. The second route goes around the unsafe set and is less risky but significantly longer. In \eqref{eq_constrained_mdp_formulation_lagrangian}, a lower value of $\lambda$ implies a lower reward on safety. Indeed, for $\lambda=14$, the policy exploits the slit through the unsafe set as a shortcut to the target. As $\lambda$ is increased to $20$, safety is more strongly rewarded and the policy prefers the longer, but less risky route around the unsafe set. This trend is also summarized in Table \ref{tab_dp_trajectories_different_lambdas}, where the safety and cost of safe trajectories increase with $\lambda$. The value of $\lambda$ therefore allows trading off safety against cost of safe trajectories. 

\begin{figure*}[!t]
    \centering
    
    
    \begin{minipage}[b]{0.03\linewidth}
        \centering
        \rotatebox{90}{\textbf{\small Opt. Cost of Safe Traj.}}
        \vspace{2em} 
    \end{minipage}
    \hspace{-1em} 
    \begin{subfigure}{0.23\linewidth}
        \centering
        \includegraphics[width=\linewidth]{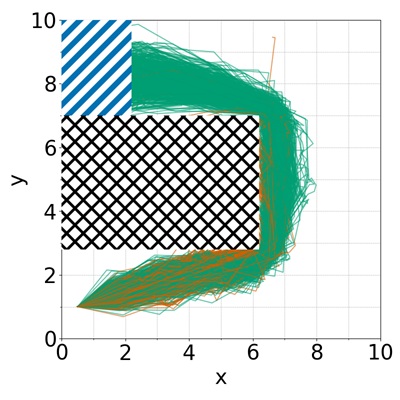}
    \end{subfigure}
    \begin{subfigure}{0.23\linewidth}
        \centering
        \includegraphics[width=\linewidth]{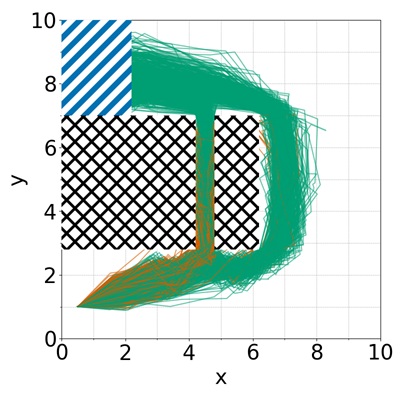}
    \end{subfigure}
    \begin{subfigure}{0.23\linewidth}
        \centering
        \includegraphics[width=\linewidth]{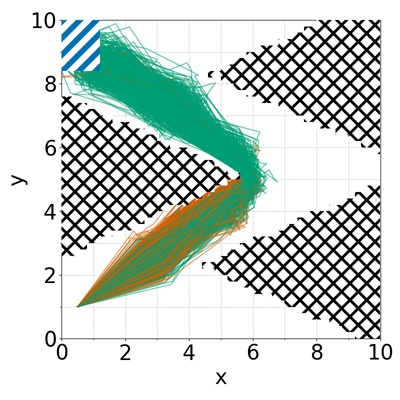}
    \end{subfigure}
    \begin{subfigure}{0.23\linewidth}
        \centering
        \includegraphics[width=\linewidth]{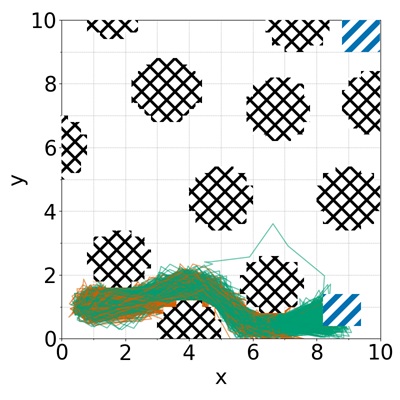}
    \end{subfigure}

    \vspace{0em} 

    \begin{minipage}[b]{0.03\linewidth}
        \centering
        \rotatebox{90}{\textbf{\small Opt. Cost of All Traj.}}
        \vspace{2em} 
    \end{minipage}
    \hspace{-1em} 
    \begin{subfigure}{0.23\linewidth}
        \centering
        \includegraphics[width=\linewidth]{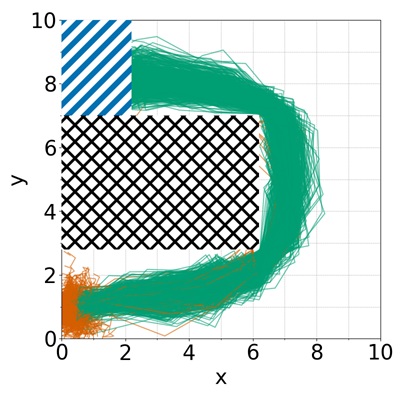}
    \end{subfigure}
    \begin{subfigure}{0.23\linewidth}
        \centering
        \includegraphics[width=\linewidth]{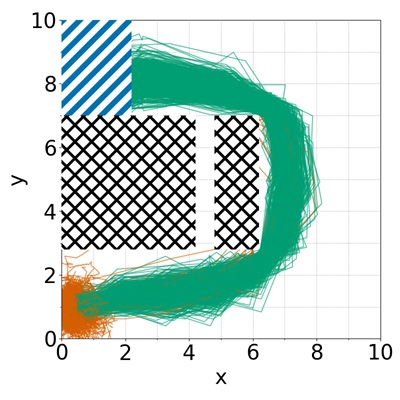}
    \end{subfigure}
    \begin{subfigure}{0.23\linewidth}
        \centering
        \includegraphics[width=\linewidth]{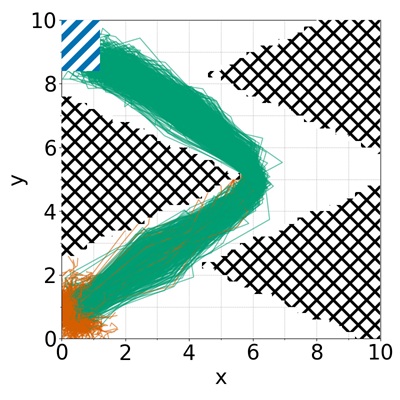}
    \end{subfigure}
    \begin{subfigure}{0.23\linewidth}
        \centering
        \includegraphics[width=\linewidth]{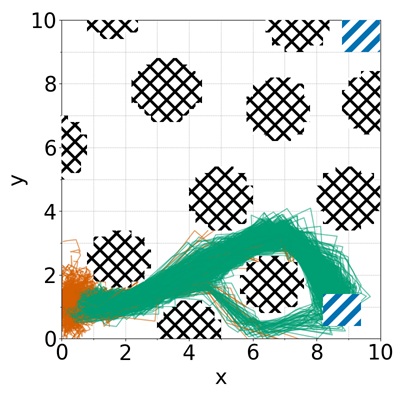}
    \end{subfigure}

    \caption{Comparison of $1000$ Monte-Carlo simulations generated using the optimal policies for problems \eqref{eq_main_problem_formulation_using_expectation} (first row) and \eqref{eq_jcc_in_words} (second row) with $\alpha=0.6$ and initial state $(0.5,1)$ solved via DP. Environments: Simple (left), Simple-Slit (middle-left), Zigzag (middle-right), Balls (right). Target: blue striped, unsafe set: black hashed, safe trajectories: green, unsafe trajectories: red.}
    \label{fig_dp_trajectories_jcc_vs_ours}
\end{figure*}

\begin{table}[!t]
\centering
\begin{tabular}{ccc}
\hline
$\bm{\lambda}$ & \textbf{Safety Probability} & \textbf{Exp. Cost of Safe Trajectories} \\ \hline
14 & 0.46 & 12.41 \\
16 & 0.47 & 12.63 \\
18 & 0.64 & 14.61 \\
20 & 0.96 & 16.99 \\ \hline
\end{tabular}
\caption{Safety vs. expected cost of safe trajectories for trajectories in Fig.~\ref{fig_dp_trajectories_different_lambdas} empirically estimated over $1000$ Monte-Carlo simulations.}
\label{tab_dp_trajectories_different_lambdas}
\end{table}

\textbf{Comparison of Problems \eqref{eq_jcc_in_words} and \eqref{eq_main_problem_formulation_using_expectation}.}
Next, we use a heuristic line-search over $\lambda$ to find policies with safety $\alpha=0.6$ in different reach-avoid environments, see Fig.~\ref{fig_dp_trajectories_jcc_vs_ours}. We compare the results to the standard JCC policies obtained from solving Problem \eqref{eq_jcc_in_words} via the procedure described in \cite{schmid2023computing}. 

Over safe and unsafe trajectories, the policies obtained from \eqref{eq_jcc_in_words} achieve a lower expected cost than the policies obtained from \eqref{eq_main_problem_formulation_using_expectation}, see Table~\ref{tab_cost_comparison}. However, this is mostly due to low-cost trajectories that remain close to the initial state and do not reach the target; those trajectories that reach the target follow more conservative routes and incur higher costs than those of \eqref{eq_main_problem_formulation_using_expectation}. Our approach only generates unsafe trajectories as “collateral damage” during risky manouvers along unsafe sets. Those trajectories that remain safe during such manouvers benefit from low costs. In applications where safety is traded in favor of performance, we argue that the behaviour of the policies obtained from \eqref{eq_main_problem_formulation_using_expectation} is more suitable than the one of those obtained from \eqref{eq_jcc_in_words} as it matches common intuition and the expected outcome of constrained stochastic control problems more closely. The Pareto fronts in Fig.~\ref{fig_pareto_costs} indicate that no performance on safe trajectories is gained by increasing risks in \eqref{eq_jcc_in_words}. The joint cost of safe and unsafe trajectories increases linearly with the safety $\alpha$ while the cost of safe trajectories remains unchanged; increasing the safety probability merely varies the number of trajectories that remain at the initial state.

We did not find any quantifiable ambiguity in the optimal policy for systems with a non-trivial number of time-steps. Any non-zero probability of noise sequences leading to low-cost safe trajectories, even from unfavorable states, poses unique trade-offs on the achievable safety and cost. 



\subsection{Reinforcement Learning based Experiments}
We compare our DP approach to RL as a state-of-the-art alternative for solving constrained MDPs. For ease of comparison, we remain with our 2D unicycle example. 

In contrast to grid-based DP, the bottleneck of RL is not the dimensionality of the system's state space but sparsity in the rewards \cite{bonyadi2022self}. Unfortunately, temporal logic specifications, such as safety or reach-avoid constraints, feature highly sparse rewards: In \eqref{eq_constrained_mdp_formulation_lagrangian} the constraint is enforced through a reward of $\lambda$, which is only achieved at terminal time and only if the system reached the target; the reward is always zero otherwise. This can render learning slow, or even unstable. 

\begin{figure*}[!t]
    \centering
    \begin{subfigure}{0.24\linewidth}
        \centering
        \includegraphics[width=\linewidth]{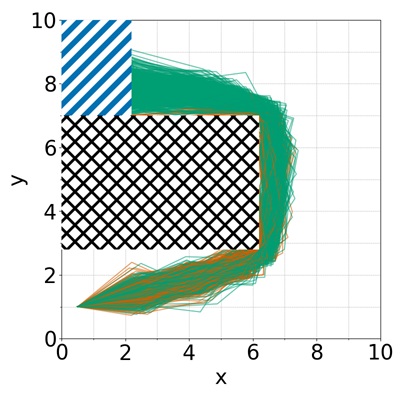}
    \end{subfigure}
    \begin{subfigure}{0.24\linewidth}
        \centering
        \includegraphics[width=\linewidth]{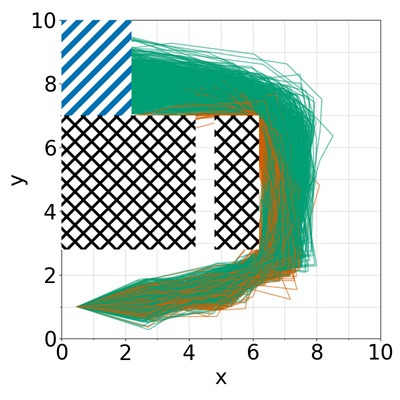}
    \end{subfigure}
    \begin{subfigure}{0.24\linewidth}
        \centering
        \includegraphics[width=\linewidth]{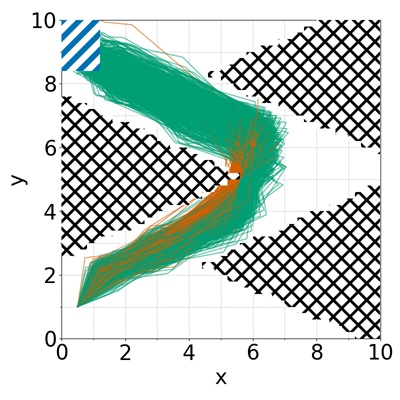}
    \end{subfigure}
    \begin{subfigure}{0.24\linewidth}
        \centering
        \includegraphics[width=\linewidth]{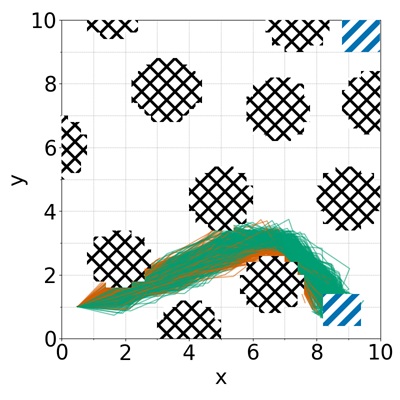}
    \end{subfigure}
    \caption{Trajectories generated by RL-policies solving the safe-optimal Problem \eqref{eq_main_problem_formulation_using_expectation} for the environments in Fig.~\ref{fig_dp_trajectories_jcc_vs_ours}.}
    \label{fig_rl_trajectories_different_worlds_safeties}
\end{figure*}

To overcome this sparse reward structure, we implement a lexicographic learning framework; all steps are implemented via the soft-actor-critic algorithm \cite{haarnoja2018soft} whose implementation we adopted from \cite{khan2024allrl}: 
\begin{enumerate}
    \item We first train the actor and critic to achieve a large reach-avoid probability from arbitrary initial states. To achieve this, we choose a large value of $\lambda$ and uniformly sample initial states across the state space.  
    \item We stop randomizing the initial state but train the actor and critic from the true initial state of the system. If $\lambda$ is chosen high enough, this generally results in a policy that achieves a reach-avoid probability greater than $\alpha$.
    \item The final step involves updating $\lambda$ until the policy achieves a safety of approximately $\alpha$. To achieve this, we apply the update
    \begin{align}
      \lambda^{i+1} &= \lambda^i - \eta_\lambda(\mathbb{E}[g_N(z_{N},u_N)] - \alpha) 
    \end{align}
    after every episode $i$ where the expectation is empirically approximated via $1000$ rollouts. Note that the actor and critic do not depend on $\lambda$. To ensure stable convergence, we choose $\eta_{\lambda}$ approximately equal to the critic learning rate, while the actor learning rate is three times smaller.
\end{enumerate}
Each step is executed until convergence or until a maximum number of episodes has been reached. The actor and critic are realized by neural networks with two fully connected hidden layers and $32$ nodes per layer.


\textbf{Comparison of RL with DP.} We execute the RL approach on the same environments as DP. Fig.~\ref{fig_rl_trajectories_different_worlds_safeties} depicts trajectories obtained from Monte-Carlo simulations of the resulting policies. The average costs of all, only safe and only unsafe trajectories are listed in Table \ref{tab_cost_comparison} and compared to DP. In general, the trajectory rollouts display a similar behaviour for the RL and DP policy, which verifies the general applicability of RL to Problem \eqref{eq_main_problem_formulation_using_expectation}. However, DP outperformed RL in all except the Simple-environment. Particularly in the Simple-Slit environment the RL policy did not exploit the slit to reduce costs which indicates convergence to a locally optimal solution. The conservative routes chosen by RL are potentially due to our lexicographic learning framework, which approaches a cost-safety balancing policy from a maximum safe policy. 

Overall, computing an optimal policy took significantly longer with RL, ranging from $91$ minutes for the Zigzag environment to $132$ minutes for the Simple-Slit environment (Fig.~\ref{fig_rl_trajectories_different_worlds_safeties}, third and second column, respectively). However, we remark that the computational complexity of grid-based DP scales exponentially with its state space dimensionality, which often hampers its applicability to real-world problems. In contrast, RL proved successful in solving high-dimensional, real-world MDPs at which DP becomes infeasible \cite{silver2018general, miki2022learning}. We do not aim to display the general scalability of RL, but rather compare the quality of its solutions against our DP results.

\begin{table}[h]
\centering
\begin{tabular}{@{}llcccc@{}}
\toprule
\multirow{2}{*}{\textbf{Environment}} & \multirow{2}{*}{\textbf{Expected Cost of}} & \textbf{Standard JCC} & \multicolumn{2}{c}{\textbf{Safe-Opt.}} \\ \cmidrule(lr){3-3} \cmidrule(lr){4-5} 
 &  & \textbf{DP} & \textbf{DP} & \textbf{RL} \\ \midrule
\multirow{3}{*}{\textbf{Simple}} 
 & All Trajectories & 9.84 & 12.53 & 12.49 \\
 & Unsafe Trajectories & 0.96 & 6.85 & 7.77 \\
 & Safe Trajectories & 16.38 & 16.23 & 15.30 \\ \midrule
\multirow{3}{*}{\textbf{Simple-Slit}} 
 & All Trajectories & 9.46 & 11.39 & 14.62 \\
 & Unsafe Trajectories & 0.68 & 6.75 & 11.67 \\
 & Safe Trajectories & 16.23 & 14.31 & 16.30 \\ \midrule
\multirow{3}{*}{\textbf{Zigzag}} 
 & All Trajectories & 8.47 & 10.16 & 12.15 \\
 & Unsafe Trajectories & 0.72 & 6.83 & 8.26 \\
 & Safe Trajectories & 13.30 & 12.63 & 14.45 \\ \midrule
\multirow{3}{*}{\textbf{Balls}} 
 & All Trajectories & 5.85 & 5.93 & 7.93 \\
 & Unsafe Trajectories & 0.73 & 4.10 & 5.28 \\
 & Safe Trajectories & 9.21 & 7.04 & 9.31 \\ \bottomrule
\end{tabular}
\caption{Average trajectory costs over $1000$ Monte-Carlo simulations for the standard JCC policy \eqref{eq_jcc_in_words} computed via DP, and the safe-optimal JCC policy \eqref{eq_main_problem_formulation_using_expectation} computed via DP and RL for different environments.}
\label{tab_cost_comparison}
\end{table}

\section{Conclusion \& Outlook}
\label{sec_concl}
We addressed shortcomings of the standard joint chance constraint problem for MDPs by a novel formulation in which only the performance of constraint-satisfying trajectories is optimized. We demonstrated that this problem variant avoids phenomena in which chance constrained controllers actively enforce constraint violations in favor of low cost. We proved that our problem can be cast as a constrained Markov Decision Process on an augmented state space and analysed dynamic programming and reinforcement learning based solution methods. We proposed bounds on the policies' safety when solved on a gridding abstraction of the MDP. Future work will analyse the effect of our formulation on more general temporal logic constraints \cite{haesaert2021formal}, build connections to conditional value-at-risk problems, introduce further problem variants based on our formulation, and validate our approach on real-world systems.

\bibliographystyle{IEEEtran}
\bibliography{references}

\appendix
\label{sec_appendix}
\ifArxiv
\counterwithin{theorem}{subsection}
\else
\counterwithin{thm}{subsection}
\counterwithin{defn}{subsection}
\counterwithin{conj}{subsection}
\counterwithin{exmp}{subsection}
\counterwithin{cor}{subsection}
\fi

The following sections contain the proofs of our main Theorems \ref{thm_strong_duality_main} and \ref{prop_error_bounds}, as well as Proposition \ref{prop_constrained_mdp_formulation_refining}. Throughout, we will integrate over histories $h_k$ while the integrated term is only a function of states and actions contained in $h_k$; we assume that it is clear from the context that the respective states and actions are associated to the integrated history. Further, we denote the set of histories in $\mathcal{M}^{z}$ and $\mathcal{M}^{rz}$ for $k\in[N+1]$ by $\mathcal{H}^{z}_k=\prod_{j=0}^{k-1}(\mathcal{Z}\times\mathcal{U}_j)\times\mathcal{Z}$ and $\mathcal{H}^{rz}_k=\prod_{j=0}^{k-1}(\mathcal{Z}^r\times\mathcal{U}^r_j)\times\mathcal{Z}^r$, respectively.

\subsection{Proof of Theorem \ref{thm_strong_duality_main}}
\label{sec_appendix_proof_of_strong_duality_main}
Let $\psi:\mathcal{H}_k\rightarrow \mathcal{H}^z_k$ be the measurable mapping that associates to any history $h_k\in\mathcal{H}_k$ the unique associated sequence $h^z_k\in\mathcal{H}^z_k$ with the same states and actions in $\mathcal{X}$ and $\mathcal{U}$ for all $k\in[N]$, that is 
\begin{align*}
    \psi((x_k,u_{k-1},\dots,x_0))=((x_k,c_k),u_{k-1},\dots,(x_0,c_0))
\end{align*}
with $c_0=0$, $c_k=c_{k-1}+\ell_{k-1}(x_{k-1},u_{k-1})$ for $k=1,\dots,N$. Note that $\psi$ is bijective, since $h_k$ and $h^z_k$ only differ in the additional cost-states $c_0,\dots, c_k$, which can be uniquely reconstructed from the state-action history.

\begin{defn}
    Let $(a,\pi)$ be feasible for \eqref{eq_main_problem_formulation_using_expectation}. We call $\pi^{z}=\{\pi_0^{z},\dots,\pi_N^{z}\}$ with $\pi_k^{z}(\cdot|h_k^z)=\pi_k(\cdot|\psi^{-1}(h_k^z))$ for all $h_k^z\in\mathcal{H}^z_k$ and $\pi_N^{z}(1|h_N^z)=a(\psi^{-1}(h_N^z))$, $\pi_N^{z}(0|h_N^z)=1-\pi_N^{z}(1|h_N^z)$ an adaptation of $(a,\pi)$ to \eqref{eq_constrained_mdp_formulation}. Vice versa, let $\pi^z$ be feasible for \eqref{eq_constrained_mdp_formulation}. We call $(\pi,a)$ with $\pi=\{\pi_0,\mydots,\pi_{N-1}\}$, $\pi_k(\cdot|h_k)=\pi_k^z(\cdot|\psi(h_k))$ for any $h_k\in\mathcal{H}_k$, $k\in[N-1]$ and $a(\cdot)=\pi_N^z(1|\psi(\cdot))$ an adaptation of $\pi$ to \eqref{eq_main_problem_formulation_using_expectation}.
    \label{def_policy_adaptation}
\end{defn}
By construction, if $(a,\pi)$ is an adaptation of $\pi^z$ to \eqref{eq_main_problem_formulation_using_expectation}, then $\pi^z$ is an adaptation of $(a,\pi)$ to \eqref{eq_constrained_mdp_formulation}. 

For all $k\in[N]$, executing a policy $\pi\in\Pi$ with initial condition $x_0\in\mathcal{X}$ on $\mathcal{M}$ induces an occupation measure on $\mathcal{H}_k$ \cite{hernandez2012discrete}, which we denote by $p_k$. Likewise, we denote occupation measures on $\mathcal{H}_{0:N+1}^z$ by $p_{0:N+1}^z$. The adaptation of policies in Definition \ref{def_policy_adaptation} induces a relation between their associated occupation measures $p_{0:N}$ and $p_{0:N+1}^z$.
\begin{lem}
    Let $(a,\pi)$ be an adaptation of $\pi^z$ to \eqref{eq_main_problem_formulation_using_expectation}, or vice versa to \eqref{eq_constrained_mdp_formulation}. Let $p_{0:N}$ be the occupation measures on $\mathcal{H}_{0:N}$ induced by executing $\pi$ on $\mathcal{M}$. Let $p_{0:N+1}^z$ be the occupation measures on $\mathcal{H}^z_{0:N}$ induced by executing $\pi^z$ on $\mathcal{M}^z$. Then, $p_k^z = \psi_\# p_k$ for all $k\in[N]$, where $\psi_\#$ denotes the push-forward through $\psi$.
    \label{lem_occupation_measure_pushforward}
\end{lem}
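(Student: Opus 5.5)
The proof goes by induction on $k$, using that both occupation measures are built by the Ionescu--Tulcea construction from the same ingredients: an initial distribution, policy kernels and transition kernels. For $k=0$ we have $p_0=\delta_{x_0}$ and $p_0^z=\delta_{(x_0,0)}$. Since $\psi(x_0)=(x_0,0)$, we get $\psi_\#p_0=p_0^z$.

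For the inductive step, assume $p_k^z=\psi_\# p_k$ for some $k\le N-1$. It suffices to check equality on measurable rectangles of the form $A\times B\times (D\times C)$. Here $A\in\mathcal{B}(\mathcal{H}^z_k)$ is a set of past histories, $B\in\mathcal{B}(\mathcal{U})$, $D\in\mathcal{B}(\mathcal{X})$ and $C\in\mathcal{B}(\mathcal{C})$; these rectangles form a $\pi$-system generating $\mathcal{B}(\mathcal{H}^z_{k+1})$. By construction of the occupation measures,
\begin{align*}
p^z_{k+1}(A\times B\times D\times C)=\int_A\int_B \mathcal{Q}(D|x_k,u_k)\,\delta_{c_k+\ell_k(x_k,u_k)}(C)\,\pi^z_k(du_k|h^z_k)\,p^z_k(dh^z_k).
\end{align*}
We then apply the induction hypothesis and the change-of-variables formula for push-forwards, which rewrites the outer integral as an integral over $h_k\in\psi^{-1}(A)$ against $p_k$. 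Definition \ref{def_policy_adaptation} gives $\pi^z_k(\cdot|\psi(h_k))=\pi_k(\cdot|h_k)$. On the image of $\psi$ we also have $c_k=\sum_{j<k}\ell_j(x_j,u_j)$, so the Dirac factor equals $\mathbb{1}_C\bigl(\sum_{j\le k}\ell_j(x_j,u_j)\bigr)$. The resulting integral is exactly $p_{k+1}$ of the set of histories $h_{k+1}=(x_{k+1},u_k,h_k)$ with $h_k\in\psi^{-1}(A)$, $u_k\in B$, $x_{k+1}\in D$ and new cost coordinate in $C$. That set is precisely $\psi^{-1}(A\times B\times D\times C)$, because $\psi$ appends to $\psi(h_k)$ the action $u_k$ and the augmented state $(x_{k+1},c_k+\ell_k(x_k,u_k))$. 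Hence $p^z_{k+1}=\psi_\#p_{k+1}$ on the generating $\pi$-system. Both sides are probability measures, so by Dynkin's $\pi$--$\lambda$ theorem they agree on all of $\mathcal{B}(\mathcal{H}^z_{k+1})$.

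The claim only concerns $k\in[N]$, so the virtual step $N$ of $\mathcal{M}^z$, with input $u_N$, the kernel $\pi^z_N$ and $\mathcal{Q}^z_N$, never enters the induction. The "vice versa" direction needs no separate argument: the paper already notes that adaptation is symmetric, so in either case the pair satisfies $\pi_k(\cdot|h_k)=\pi^z_k(\cdot|\psi(h_k))$ for all $k\le N-1$, which is all the induction uses.

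The step that needs the most care is measurability and the identification of $\psi^{-1}$ of the rectangle. I will first verify that $\psi$ is Borel measurable: it is built by composing the measurable functions $\ell_j$ with coordinate projections. I will also use that its image, the set of histories with consistent cost coordinates, is a Borel set on which $p_k^z$ is concentrated. The concentration follows from the inductive form of $\mathcal{Q}^z_k$, since the cost coordinate is deterministic given the past. This lets me handle $\psi^{-1}$ on the image without needing $\psi$ to be surjective onto $\mathcal{H}^z_k$.
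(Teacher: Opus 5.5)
Your proof is correct and follows the same route as the paper's. Both argue by induction on $k$ through the one-step construction of the occupation measures, change variables via the induction hypothesis $p_k^z=\psi_\# p_k$, and use the adaptation identity $\pi^z_k(\cdot|\psi(h_k))=\pi_k(\cdot|h_k)$. Your extra steps tighten details the paper glosses over rather than change the approach: checking on rectangles with Dynkin's theorem, integrating out the Dirac cost coordinate explicitly, and noting that $\psi$ is injective but not onto $\mathcal{H}^z_k$ (the paper calls it bijective).
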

\begin{proof}
    We use the induction hypothesis that, up to time-step $k$, $p_k^z = \psi_\# p_k$, and show that this implies $p_{k+1}^z = \psi_\# p_{k+1}$. Indeed, for any $B\in\mathcal{B}(\mathcal{H}_{k+1}^z)$,
    \begin{subequations}
        \begin{align}
            p_{k+1}^z(B)\!&=\!\mathbb{P}_{x_0}^{\pi}(h_{k+1}^z\in B|h_k^z)
            \\
            &=\!\int_{\mathcal{H}_k^z}\int_{\mathcal{U}}\int_{\mathcal{Z}}\hspace{-.5em}\mathbb{1}_{B}(h_{k+1}^z)\mathcal{Q}^z_k(dz_{k+1}|z_k,u_k)\label{eq_proof_occupation_measure_pushforward_eq1}\\&\qquad\pi_k^{z}(du_k|h_k^z)p_k^z(dh_k^z) \nonumber \\
            &=\!\int_{\mathcal{H}_k}\int_{\mathcal{U}}\int_{\mathcal{X}}\hspace{-.5em}\mathbb{1}_{B}(\psi(h_{k+1}))\mathcal{Q}(dx_{k+1}|x_k,u_k)\label{eq_proof_occupation_measure_pushforward_eq2}\\&\qquad\pi_k^{z}(du_k|\psi(h_k))p_k^z(\psi(dh_k)) \nonumber
            \\
            &=\!\int_{\mathcal{H}_k}\!\int_{\mathcal{U}}\!\int_{\mathcal{X}}\hspace{-.5em}\mathbb{1}_{\psi^{-1}(B)}(h_{k+1})\mathcal{Q}(dx_{k+1}|x_k,u_k)\label{eq_proof_occupation_measure_pushforward_eq3}\\&\qquad\pi_k(du_k|h_k)p_k(dh_k) \nonumber
            \\
            &=\!p_{k+1}(\psi^{-1}(B)) \label{eq_proof_occupation_measure_pushforward_eq4},
        \end{align}
    \end{subequations}
    where the first two equalities follow from the definition of the occupation measure, the third equality follows from the change of variables $h_k^z=\psi(h_k)$ and uses the fact that $\mathcal{Q}^z_k(B\times\mathcal{C}|z_k,u_k) = \mathcal{Q}(B|x_k,u_k)$ for all $B\in\mathcal{B}(\mathcal{X})$ and for all $z_k$ and $x_k$ associated to $h_k^z=\psi(h_k)$, the fourth equality follows from the induction hypothesis and the definition of $\pi$, the last equality again follows by definition. 

    Starting the induction from $k=0$, where $p_0^z((x_0,0)) = p_0^z(\psi(x_0)) = p_0(x_0)=1$ for the initial state $x_0$ and zero otherwise, we conclude that $p_k^z = \psi_\# p_k$ for all $k\in[N]$.
\end{proof}
In other words, the probability of visiting a set of states and actions is equivalent in both MDPs $\mathcal{M}$ and $\mathcal{M}_z$ under the respective policies $\pi$ and $\pi^z$. Their occupation measure only differs in the additional cost state, which has following implications.
\begin{lem}
    Let $(a,\pi)$ be feasible for \eqref{eq_main_problem_formulation_using_expectation}, then the adaptation $\pi^z$ is feasible for \eqref{eq_constrained_mdp_formulation}, and vice versa.
    \label{lem_adaptations_feasible}
\end{lem}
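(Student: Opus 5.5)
We will show that the constraint values of the two problems coincide under adaptation; in fact we will show the objective values coincide as well, since this is needed later. Suppose $(a,\pi)$ and $\pi^z$ are adaptations of one another in the sense of Definition \ref{def_policy_adaptation}. The plan is to express both constraint expectations as integrals against the occupation measures and then transport one into the other via Lemma \ref{lem_occupation_measure_pushforward}.

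\textbf{Step 1: rewrite the constraint of \eqref{eq_constrained_mdp_formulation} on $\mathcal{H}_N^z$.} The virtual last step uses $u_N\in\{0,1\}$, drawn from $\pi_N^z(\cdot|h_N^z)$. Since $g_N(z_N,u_N)=u_N\mathbb{1}_{\mathcal{Z}^s}(z_N)$, integrating out $u_N$ gives
\begin{align*}
\mathbb{E}_{z_0}^{\pi^z}[g_N(z_N,u_N)]=\int_{\mathcal{H}_N^z}\pi_N^z(1|h_N^z)\,\mathbb{1}_{\mathcal{Z}^s}(z_N)\,p_N^z(dh_N^z).
\end{align*}

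\textbf{Step 2: push forward.} By Lemma \ref{lem_occupation_measure_pushforward}, $p_N^z=\psi_\#p_N$. The change of variables $h_N^z=\psi(h_N)$ turns the integrand into $\pi_N^z(1|\psi(h_N))\,\mathbb{1}_{\mathcal{X}^s}(x_N)$, because the $x$-component of $\psi(h_N)$ is $x_N$ and $\mathcal{Z}^s=\mathcal{X}^s\times\mathcal{C}$. By Definition \ref{def_policy_adaptation}, $\pi_N^z(1|\psi(h_N))=a(h_N)$, and this holds in both directions of adaptation since $\psi$ is a bijection. The constraint therefore equals
\begin{align*}
\int_{\mathcal{H}_N}a(h_N)\,\mathbb{1}_{\mathcal{X}^s}(x_N)\,p_N(dh_N).
\end{align*}

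\textbf{Step 3: match the product of indicators.} By construction (i), $x^\dagger$ is absorbing and $\mathcal{X}=\mathcal{X}^s\cup x^\dagger$. Hence $\prod_{k=0}^N\mathbb{1}_{\mathcal{X}^s}(x_k)=\mathbb{1}_{\mathcal{X}^s}(x_N)$ holds $p_N$-almost surely. The exceptional histories, those that leave $x^\dagger$, have $p_N$-measure zero, and this is the only subtle point. The last integral is therefore exactly the left-hand side of \eqref{eq_main_problem_formulation_using_expectation_constraint}, so one constraint holds with $\geq\alpha$ if and only if the other does.

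The same computation with $f_N=u_N(c_N+\ell_N(x_N))$, using $c_N=\sum_{j<N}\ell_j(x_j,u_j)$ along $\psi(h_N)$, shows the objectives agree as well. The main obstacle is measure-theoretic bookkeeping rather than any real difficulty. Three things need checking: that $\pi_N^z(1|\cdot)\circ\psi$ is measurable (it is, since $\psi$ is measurable with measurable inverse); that the change of variables is legitimate for nonnegative integrands (it is, by the push-forward definition); and the almost-sure reduction of the indicator product in Step 3.
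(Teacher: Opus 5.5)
Your proposal is correct and follows the same route as the paper: a change of variables $h_N^z=\psi(h_N)$ justified by Lemma~\ref{lem_occupation_measure_pushforward}, after which the two constraint integrals coincide. Your version is more complete than the paper's one-line argument, whose displayed identity compares only the unweighted quantities $\int\mathbb{1}_{\mathcal{H}_N^s}\,dp_N$; it leaves implicit both the factor $a(h_N)=\pi_N^z(1|\psi(h_N))$ obtained by integrating out $u_N$ and the $p_N$-almost-sure reduction $\prod_{k=0}^N\mathbb{1}_{\mathcal{X}^s}(x_k)=\mathbb{1}_{\mathcal{X}^s}(x_N)$ coming from the absorbing state $x^{\dagger}$, and you make both of these explicit.
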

\begin{proof}
    This follows immediately by Lemma \ref{lem_occupation_measure_pushforward}, which allows a change of variables in $\int_{\mathcal{H}_N}\mathbb{1}_{\mathcal{H}^s_N}(h)p_N(dh)= \int_{\mathcal{H}^z_N}\mathbb{1}_{\mathcal{H}^s_N}(\psi^{-1}(h^z))p_N^z(dh^z)$. If one term is greater than or equal to $\alpha$, so is the other.
\end{proof}

We are now ready for prove the existence of a solution for \eqref{eq_constrained_mdp_formulation}, and verify the solutions' optimality in \eqref{eq_main_problem_formulation_using_expectation}. For the former, we rely on the following.
\begin{lem}[{\cite[Theorem 5.1, Lemma 4.1]{feinberg2002nonatomic}}]\label{lem_optimality_of_constrained_MDPs}
    Assume that $\mathcal{Q}$ is a weakly continuous transition kernel, the input space $\mathcal{U}$ is compact, the reward functions $r_k^i$ nonpositive and upper semi-continuous and $\alpha_i\in\mathbb{R}$ for all $k\in\mathbb{N}$, $i\in[M]$, $N,M\in\mathbb{N}$. The constrained MDP
    \begin{subequations}
        \begin{align}
            &\sup_{\pi\in\Pi} &&\mathbb{E}_{x_0}^{\pi}\left[\sum_{k=0}^{\infty} r_k^0(x_k,u_k)\right] \\
            &\text{s.t.} && \mathbb{E}_{x_0}^{\pi}\left[\sum_{k=0}^{\infty} r_k^i(x_k,u_k)\right]\geq \alpha_i
        \end{align}%
        \label{eq_general_constrained_mdp}%
    \end{subequations}%
    admits a feasible policy if and only if it admits an optimal stochastic Markov policy.
\end{lem}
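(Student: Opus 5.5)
The statement to prove is Lemma~\ref{lem_optimality_of_constrained_MDPs}, which the paper cites from \cite{feinberg2002nonatomic}. Rather than reprove it from scratch, I would organise the argument along the standard occupation-measure route. The ``if'' direction is trivial: an optimal policy is in particular feasible. For the ``only if'' direction, I first establish that an optimum exists, and then show it can be realised by a Markov policy.

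\textbf{Step 1 (reformulation).} For each initial state $x_0$ and policy $\pi$, consider the strategic measure $\mathbb{P}^\pi_{x_0}$ on the space of infinite histories $(\mathcal{X}\times\mathcal{U})^{\infty}$, equipped with the product topology. Let $\mathcal{S}$ denote the set of all such strategic measures. I would invoke the classical result (Sch\"al; Balder; Nowak) that, when $\mathcal{Q}$ is weakly continuous and $\mathcal{U}$ is compact, $\mathcal{S}$ is convex and compact in the weak topology. In the noncompact $\mathcal{X}$ case one uses a tightness argument based on the fixed $x_0$, together with compactness of $\mathcal{U}$ and weak continuity of $\mathcal{Q}$.

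\textbf{Step 2 (semicontinuity).} Next I show that each map
\[
P\mapsto \int \sum_{k=0}^{\infty} r^i_k(x_k,u_k)\,dP
\]
is upper semicontinuous on $\mathcal{S}$. The finite truncations $\sum_{k\le n} r^i_k$ are upper semicontinuous and bounded above by $0$, so by the portmanteau theorem their integrals are u.s.c.\ in $P$. Since $r^i_k\le 0$, the full sum is the decreasing limit of these truncations. An infimum of u.s.c.\ functions is u.s.c., so the full expected reward is u.s.c.\ in $P$.

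\textbf{Step 3 (existence of an optimum).} The feasible set $\{P\in\mathcal{S}:\int\sum_k r^i_k\,dP\ge\alpha_i,\ i=1,\dots,M\}$ is a superlevel set of u.s.c.\ functions intersected with a compact set, hence closed and therefore compact. It is nonempty by assumption. An u.s.c.\ objective attains its supremum on a nonempty compact set, so there is an optimal strategic measure $P^\star=\mathbb{P}^{\pi^\star}_{x_0}$.

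\textbf{Step 4 (reduction to Markov).} I expect this to be the main obstacle, and it is the content of \cite[Lemma 4.1]{feinberg2002nonatomic}. For the possibly history-dependent $\pi^\star$, take the marginals $\mu_k$ of $P^\star$ on $\mathcal{X}\times\mathcal{U}$ at each time $k$. Disintegrate $\mu_k(dx,du)=\nu_k(dx)\,\sigma_k(du|x)$ and define the Markov policy $\sigma=(\sigma_k)$. By induction on $k$, using the Markov property of $\mathcal{Q}$, the policy $\sigma$ reproduces the same one-step marginals: $x_{k+1}$ has law $\int\mathcal{Q}(\cdot|x,u)\,\mu_k(dx,du)$ under both $\pi^\star$ and $\sigma$.

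Every criterion is a sum of expectations of functions of $(x_k,u_k)$ alone. Since $r^i_k\le0$, the monotone convergence theorem lets us interchange sum and expectation. Hence every criterion is determined by the marginals, so $\sigma$ is feasible and optimal. The measurability of the disintegration holds because $\mathcal{X}$ and $\mathcal{U}$ are standard Borel spaces.

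The delicate points are the compactness in Step 1, which genuinely needs weak continuity and compact $\mathcal{U}$, and the handling of possibly $-\infty$ values in Step 2. I would handle the latter by working in the extended reals and noting that feasibility forces the constraint values to be finite.
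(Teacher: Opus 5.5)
Your proposal is correct and takes essentially the same route as the paper. The paper gives no argument of its own beyond citing Feinberg and Piunovskiy, and the cited results rest on exactly your two ingredients: existence of an optimal strategic measure from weak compactness of the set of strategic measures (weakly continuous kernel, compact $\mathcal{U}$) combined with upper semicontinuity of the nonpositive, upper semicontinuous total-reward functionals, and then the Derman--Strauch reduction to a randomized Markov policy with identical state--action marginals.
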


\begin{lem}
    Under Assumption \ref{ass_assumptions}, there exists a stochastic Markov policy $\pi\in\Pi$ that optimally solves Problem \eqref{eq_constrained_mdp_formulation}.    \label{lem_existence_and_markov_property_of_optimal_solution}
\end{lem}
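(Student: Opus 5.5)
The plan is to cast \eqref{eq_constrained_mdp_formulation} in the form \eqref{eq_general_constrained_mdp} and then invoke Lemma \ref{lem_optimality_of_constrained_MDPs}. Feasibility comes from Assumption \ref{ass_assumptions}: by item 1 there is a feasible pair $(a,\pi)$ for \eqref{eq_main_problem_formulation_using_expectation}, and Lemma \ref{lem_adaptations_feasible} turns it into a feasible adaptation $\pi^z$ for \eqref{eq_constrained_mdp_formulation}. Lemma \ref{lem_optimality_of_constrained_MDPs} then gives an optimal stochastic Markov policy, provided its hypotheses hold for $\mathcal{M}^z$.

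To fit the infinite-horizon form, I extend $\mathcal{M}^z$ past time $N+1$ with an absorbing dynamics (for instance $\mathcal{Q}^z_k(z|z,\cdot)=1$ for $k\geq N$) and zero rewards for $k\neq N$. I take $r^0_N=-f_N$ and $r^1_N=g_N-1$ with threshold $\alpha_1=\alpha-1$, which keeps every reward nonpositive. The time-varying action space can be handled by embedding it in the compact product $\mathcal{U}\times\{0,1\}$: one coordinate is ignored before time $N$ and the other after. Alternatively, time can be appended to the state, which makes the system stationary on $\mathcal{Z}\times[N+1]$. A Markov policy in this augmented model is still Markov in $z_k$ for each fixed $k$.

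Next I check the regularity conditions.
\begin{itemize}
\item \emph{Compactness.} $\mathcal{U}_N=\{0,1\}$ is finite and $\mathcal{U}$ is compact, so the action space is compact.
\item \emph{Weak continuity of $\mathcal{Q}^z_k$.} For $v\in C_b(\mathcal{Z})$ we have $\int v\,d\mathcal{Q}^z_k(\cdot|z,u)=\int v(y,c+\ell_k(x,u))\,\mathcal{Q}(dy|x,u)$. Continuity in $(x,c,u)$ follows from continuity of $\ell_k$ (item 3) and weak continuity of $\mathcal{Q}$ (item 4). The integrand moves jointly with $(x,c,u)$, so I will use the standard fact that $\mathcal{Q}(\cdot|x_n,u_n)\to\mathcal{Q}(\cdot|x,u)$ weakly and that $v(\cdot,c_n)\to v(\cdot,c)$ uniformly on compacts. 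Tightness of a weakly convergent sequence supplies those compacts. The identity kernel at time $N$ is trivially weakly continuous.
\item \emph{Semicontinuity of rewards.} $-f_N(z,u)=-u(c+\ell_N(x))$ is continuous and nonpositive. For $r^1_N=u\mathbb{1}_{\mathcal{Z}^s}(z)-1$, note that $\mathcal{Z}^s=\mathcal{X}^s\times\mathcal{C}$ is closed by item 2. The indicator of a closed set is upper semicontinuous, and multiplying by $u\in\{0,1\}$ with the discrete topology on $\{0,1\}$ preserves this.
\end{itemize}

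The step I expect to be the main obstacle is that the lemma asks for bounded-type regularity, while the cost state lives in the unbounded $\mathcal{C}=[0,\infty)$. This matters in two places: $-f_N$ is unbounded below, and $C_b$ test functions must be handled on a noncompact space. Feinberg--Piunovskiy allow nonpositive upper semicontinuous rewards without a lower bound, so the reward itself should be fine. If needed, I would fall back on truncating $c$ at $\sum_k\|\ell_k\|_\infty$ as in Appendix \ref{sec_appendix_comment}, which is valid when the costs are bounded, or argue via monotone convergence with truncated costs $\min(f_N,M)$.

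After these checks, Lemma \ref{lem_optimality_of_constrained_MDPs} yields the claimed optimal stochastic Markov policy for \eqref{eq_constrained_mdp_formulation}.
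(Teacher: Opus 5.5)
Your proposal is correct and follows the paper's proof essentially step for step. Feasibility comes from Assumption~\ref{ass_assumptions} via Lemma~\ref{lem_adaptations_feasible}, the problem is recast with $r^0_N=-f_N$, $r^1_N=g_N-1$, $\alpha_1=\alpha-1$ and zero rewards elsewhere, and after checking compactness, weak continuity of $\mathcal{Q}^z_k$, and upper semicontinuity (using that $\mathcal{X}^s\times\mathcal{C}$ is closed) you invoke Lemma~\ref{lem_optimality_of_constrained_MDPs}. Your extra care with the time-varying action sets and the tightness argument for the jointly moving integrand fills in details the paper passes over, and the truncation fallback is unnecessary since the cited lemma imposes no boundedness on the nonpositive rewards.
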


\begin{proof}
    A feasible solution to Problem \eqref{eq_main_problem_formulation_using_expectation} exists by Assumption \ref{ass_assumptions}. Then, a feasible solution exists for Problem \eqref{eq_constrained_mdp_formulation} by Lemma \ref{lem_adaptations_feasible}. By Assumption \ref{ass_assumptions}, for all $k\in[N-1]$ the set $\mathcal{U}_k$ is compact and the kernel $\mathcal{Q}_k$ is weakly continuous since, for all $\gamma\in C_b(\mathcal{Z})$,
    \begin{subequations}
        \begin{align}
            &\int_{\mathcal{Z}}\gamma(z_{k+1})\mathcal{Q}^z_k(dz_{k+1}|z_k, u_k)
            \\ & \quad = \int_{\mathcal{X}}\int_{\mathcal{C}}\gamma((x_{k+1},c_{k+1}))
            \\ & \qquad\delta_{c_k+\ell_k(x_k,u_k)}(dc_{k+1})
             \mathcal{Q}(dx_{k+1}|x_k, u_k) \nonumber
            \\ & \quad = \int_{\mathcal{X}}\gamma((x_{k+1},c_k\!+\!\ell_k(x_k,u_k))\mathcal{Q}(dx_{k+1}|x_k, u_k) 
        \end{align}
    \end{subequations}
    is continuous and bounded since $\ell_k$ is continuous, $\gamma$ is continuous and bounded, and $\mathcal{Q}$ weakly continuous. The set $\mathcal{U}_N=\{0,1\}$ is compact and the kernel $\mathcal{Q}_N$ is weakly continuous by definition. Choosing $r_N^0=-f_N$, $r_N^1 = g_N-1$ and $r_k^0=r_k^1=0$ for $k\in\mathbb{N}\setminus\{N\}$ renders $r_k^0$ and $r_k^1$ nonpositive and upper-semicontinuous for all $k\in\mathbb{N}$ since $\mathcal{Z}\setminus x^{\dagger}=\mathcal{X}^s\times\mathcal{C}$ is closed. Choosing $\alpha_1 = \alpha - 1$ recovers the constraint of \eqref{eq_constrained_mdp_formulation_constraint} and renders Problems \eqref{eq_constrained_mdp_formulation} and \eqref{eq_general_constrained_mdp} equivalent. By Lemma \ref{lem_optimality_of_constrained_MDPs}, Problem \eqref{eq_constrained_mdp_formulation} admits an optimal stochastic Markov policy. 
\end{proof}

\begin{lem}
    Problem \eqref{eq_main_problem_formulation_using_expectation} and \eqref{eq_constrained_mdp_formulation} are equivalent in the following sense.
    \begin{enumerate}
        \item If $\pi^z$ is optimal for \eqref{eq_constrained_mdp_formulation}, then its adaptation $(\pi,a)$ to \eqref{eq_main_problem_formulation_using_expectation} optimally solves \eqref{eq_main_problem_formulation_using_expectation}. 
        
        \item If $(\pi,a)$ is optimal for \eqref{eq_main_problem_formulation_using_expectation}, then its adaptation $\pi^z$ to \eqref{eq_constrained_mdp_formulation} optimally solves \eqref{eq_constrained_mdp_formulation}.
    \end{enumerate}
    \label{lem_adaptations_optimal}
\end{lem}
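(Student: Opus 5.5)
The plan is to show that the adaptation maps of Definition \ref{def_policy_adaptation} preserve both the objective value and the constraint value. Combined with Lemma \ref{lem_adaptations_feasible}, this makes them value-preserving bijections between the feasible sets of \eqref{eq_main_problem_formulation_using_expectation} and \eqref{eq_constrained_mdp_formulation}. Optimality then transfers in both directions by a short contradiction argument.

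\textbf{Step 1: objectives coincide.} Fix a pair $(\pi,a)$ and its adaptation $\pi^z$; by the remark after Definition \ref{def_policy_adaptation} the two maps are mutually inverse.

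\begin{itemize}
\item Since $\mathcal{Q}^z_N(z|z,\cdot)=1$ and $u_N\in\{0,1\}$, the objective of \eqref{eq_constrained_mdp_formulation} can be written as an integral over $\mathcal{H}^z_N$ only:
\begin{align*}
\mathbb{E}_{z_0}^{\pi^z}[f_N(z_N,u_N)]=\int_{\mathcal{H}^z_N}\pi^z_N(1|h^z_N)\,\bigl(c_N+\ell_N(x_N)\bigr)\,p^z_N(dh^z_N).
\end{align*}
\item By Lemma \ref{lem_occupation_measure_pushforward}, $p^z_N=\psi_\#p_N$, so we may change variables $h^z_N=\psi(h_N)$.
\item Under this change of variables, $\pi^z_N(1|\psi(h_N))=a(h_N)$ by definition of the adaptation.
\item Also $c_N=\sum_{k=0}^{N-1}\ell_k(x_k,u_k)$ by the construction of $\psi$.
\item The result is exactly $\int_{\mathcal{H}_N}a(h_N)\bigl(\ell_N(x_N)+\sum_k\ell_k\bigr)p_N(dh_N)$, which is the objective \eqref{eq_main_problem_formulation_using_expectation_objective}.
\end{itemize}

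\textbf{Step 2: constraints coincide.} The same computation applies with $g_N=u_N\mathbb{1}_{\mathcal{Z}^s}(z_N)$. By construction (i), $x^\dagger$ is absorbing, so $\mathbb{1}_{\mathcal{X}^s}(x_N)=\prod_{k=0}^N\mathbb{1}_{\mathcal{X}^s}(x_k)$. This turns the constraint of \eqref{eq_constrained_mdp_formulation} into \eqref{eq_main_problem_formulation_using_expectation_constraint}. Note that Lemma \ref{lem_adaptations_feasible} was stated with the indicator alone; the weighting by $a$ is handled identically.

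\textbf{Step 3: transfer optimality.} For item 1, let $\pi^z$ be optimal and $(\pi,a)$ its adaptation. Suppose some feasible $(\pi',a')$ had a strictly smaller objective. Its adaptation $\pi'^z$ is feasible by Steps 1 and 2 and has the same strictly smaller value, contradicting the optimality of $\pi^z$. Item 2 is symmetric.

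\textbf{Main obstacle.} The delicate point is measure-theoretic bookkeeping at the virtual step $N$, specifically whether Lemma \ref{lem_occupation_measure_pushforward} covers index $N$ (it is stated for $k\in[N]$, so it does). The extra step $N+1$ contributes nothing, since $f_{N+1}=g_{N+1}=0$. We also need $\psi$ to be a measurable bijection with measurable inverse, so that the change of variables is valid for the nonnegative integrands. This follows because $\psi^{-1}$ is a coordinate projection, and the integrals may be infinite only in the objective, where monotone convergence still applies.
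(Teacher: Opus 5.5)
Your proposal is correct and follows essentially the same route as the paper. You use the pushforward $p^z_N=\psi_\# p_N$ to rewrite the objective of \eqref{eq_constrained_mdp_formulation} as $\int_{\mathcal{H}_N} a(h_N)\bigl(\ell_N(x_N)+\sum_{k}\ell_k(x_k,u_k)\bigr)p_N(dh_N)$, note that adaptations are feasible, and obtain a contradiction from the adaptation of a strictly better competitor. Your Step 2 carries the weight $a(h_N)=\pi^z_N(1|\psi(h_N))$ through the constraint and uses the absorbing state $x^\dagger$, which is slightly more careful than the paper's Lemma \ref{lem_adaptations_feasible}, since that lemma omits $a$.
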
 
\begin{proof}
    Let $p^z_{0:N+1}$ be the occupation measure on $\mathcal{H}_{0:N+1}^z$ induced by $\pi^z$ on $\mathcal{M}_z$, and $p_{0:N}$ the occupation measure induced on $\mathcal{H}_{0:N}$ by $\pi$ on $\mathcal{M}$. For notational simplicity, we write $L(h_N)=\ell_N(x_N)+\sum_{k=0}^{N-1}\ell_k(x_k,u_k)$. As a consequence of Lemma \ref{lem_occupation_measure_pushforward}, by a change of variables $h_N^z=\psi(h_N)$, 
    \begin{subequations}
        \begin{align}
            &\int_{\mathcal{H}_{N+1}^z}(c_N+\ell_N(x_N))u_N p_{N+1}^z(dh^z_{N+1}) \label{eq_proof_of_implication_problems_align_eq21}
            \\
            &=\int_{\mathcal{H}_N^z}L(\psi^{-1}(h_N^z)) \pi_N^z(1|h^z_N)p_{N}^z(dh^z_{N}) \label{eq_proof_of_implication_problems_align_eq22}
            \\
            &=\int_{\mathcal{H}_N}L(h_N)\pi_N^z(1|\psi(h_N))p_N(dh_N). \label{eq_proof_of_implication_problems_align_eq23}
        \end{align}
        \label{eq_proof_of_implication_problems_align_eq14_all}
    \end{subequations}
    Further, by Lemma \ref{lem_adaptations_feasible}, the adaptations in 1) and 2) are feasible for \eqref{eq_main_problem_formulation_using_expectation} and \eqref{eq_constrained_mdp_formulation}, respectively. 
    
    Assume, for the sake of contradiction, that $(a,\pi)$ is suboptimal in \eqref{eq_main_problem_formulation_using_expectation}. Then, there exists $(a^{\star},\pi^{\star})$ feasible for \eqref{eq_main_problem_formulation_using_expectation} on $\mathcal{M}$ which achieves a lower objective value in \eqref{eq_main_problem_formulation_using_expectation_objective}. Let $\pi^{z,\star}$ be the adaptation of $(a^{\star},\pi^{\star})$ to \eqref{eq_constrained_mdp_formulation}. Denote the occupation measure induced by $\pi^{\star}$ on $\mathcal{H}_{0:N}$ by $p_{0:N}^{\star}$ and the occupation measure induced by $\pi^{z,\star}$ on $\mathcal{H}^z_{0:N+1}$ by $p_{0:N+1}^{z,\star}$. Then, 
    \begin{subequations}
        \begin{align}
            &\int_{\mathcal{H}_N}L(h_N)\pi_N^z(1|\psi(h_N))p_N(dh_N) \label{eq_proof_of_implication_problems_align_eq11}
            \\
            &=\int_{\mathcal{H}_N}L(h_N)a(h_N)p_N(dh_N) \label{eq_proof_of_implication_problems_align_eq12}
            \\
            &>\int_{\mathcal{H}_N}L(h_N)a^{\star}(h_N)p_N^{\star}(dh_N) \label{eq_proof_of_implication_problems_align_eq13}
            \\
            &=\int_{\mathcal{H}_N^z}L(\psi^{-1}(h_N^z))\pi_N^{z,\star}(1|h_N^z)p_N^{z,\star}(dh_N^z) \label{eq_proof_of_implication_problems_align_eq14}
            \\
            & \geq \int_{\mathcal{H}_N^z}L(\psi^{-1}(h_N^z))\pi_N^z(1|h_N^z)p_N^z(dh_N^z), \label{eq_proof_of_implication_problems_align_eq15}
        \end{align}
    \end{subequations}
    where \eqref{eq_proof_of_implication_problems_align_eq12} holds by a definition of the adaptation, \eqref{eq_proof_of_implication_problems_align_eq13} holds by the suboptimality assumption of $(a,\pi)$, \eqref{eq_proof_of_implication_problems_align_eq14} holds by a change of variables, and \eqref{eq_proof_of_implication_problems_align_eq15} by optimality of $\pi^z$ in \eqref{eq_constrained_mdp_formulation}. This contradicts, hence $(\pi,a)$ must be optimal for \eqref{eq_main_problem_formulation_using_expectation}.

    The proof of 2) follows analogously and is omitted in the interest of space.
\end{proof}
Theorem \ref{thm_strong_duality_main} now follows by combining Lemma \ref{lem_existence_and_markov_property_of_optimal_solution} and Lemma \ref{lem_adaptations_optimal}. 

\subsection{Comment: Theorem \ref{thm_strong_duality_main} with Compact $\mathcal{C}$.}
\label{sec_appendix_comment}
The proof of Theorem \ref{thm_strong_duality_main} is fully analogous for a bounded cost space $\mathcal{C}$. Any cost-state appended to a history $h_k\in\mathcal{H}_k$ by $\psi(h_k)$ is within $\mathcal{C}$ by definition. It only remains to show that the kernels $Q^z_{0:N}$ are weakly continuous to ensure validity of Lemma \ref{lem_optimality_of_constrained_MDPs}.

To simplify notation, we define the function $m(c_k,x_k,u_k)=\min(C_{\text{max}},c_k\!+\!\ell_k(x_k,u_k))$, which is continuous in its arguments since $\ell_k$ is continuous for all $k\in[N]$ by Assumption \ref{ass_assumptions}. Let $\gamma\in C_b(\mathcal{Z})$. Then, the composition $\gamma((x_{k+1}, m(c_k,x_k,u_k)))$ is continuous in $(x_{k+1},x_k,c_k,u_k)$ and bounded. Further,  
\begin{subequations}
    \begin{align}
        &\int_{\mathcal{Z}}\gamma(z_{k+1})\mathcal{Q}^z_k(dz_{k+1}|z_k, u_k)
        \\ & \quad = \int_{\mathcal{X}}\int_{\mathcal{C}}\gamma((x_{k+1},c_{k+1}))
        \\ & \qquad\delta_{m(c_k,x_k,u_k)}(dc_{k+1})
         \mathcal{Q}(dx_{k+1}|x_k, u_k) \nonumber
        \\ & \quad = \int_{\mathcal{X}}\gamma((x_{k+1},m(c_k,x_k,u_k))\mathcal{Q}(dx_{k+1}|x_k, u_k) 
    \end{align}
\end{subequations}
is continuous in $(z_k,u_k)$ with $z_k=(x_k,c_k)$ and bounded since $\mathcal{Q}$ is weakly continuous in $(x_k,u_k)$ by Assumption \ref{ass_assumptions}. 
Hence, $\mathcal{Q}^z_k$ is weakly continuous for all $k\in[N-1]$; $\mathcal{Q}^z_N$ is continuous by definition. 

\subsection{Proof of Theorem \ref{prop_error_bounds}} \label{sec_appendix_proof_of_error_bounds}
Similarly to $\mathcal{M}^z$, we introduce a gridding abstraction for $\mathcal{M}$, which will act as a bridge between $\mathcal{M}$ and $\mathcal{M}^{rz}$ in our derivation. We denote this abstraction by $\mathcal{M}^r=\{\mathcal{X}^r, {\mathcal{U}}^r,\mathcal{Q}^r,{\ell}_{0:N},g_{0:N}, \alpha\}$, where $\mathcal{Q}^r({x}^r_j|{x},{u})=\mathcal{Q}(\mathcal{X}_j|x,u)$ for all $j=1,\mydots,M_x$. The different MDPs are summarized in Table \ref{tab_mdp_relations}.

\begin{table}[!tb]
    \centering
    \begin{tabular}{c|cccc}
         MDP & State space & Action space & Kernel & $k$-step-histories \\ \hline
         $\mathcal{M}$ & $\mathcal{X}$ & $\mathcal{U}$ & $\mathcal{Q}$ & $\mathcal{H}_k$ \\
         $\mathcal{M}^z$ & $\mathcal{Z}\!=\!\mathcal{X}\!\times\!\mathcal{C}$ &  $\mathcal{U}_k$ & $\mathcal{Q}^z_k$ & $\mathcal{H}^z_k$ \\
         $\mathcal{M}^r$ & $\mathcal{X}^r$ & $\mathcal{U}^r$ & $\mathcal{Q}^r$ & $\mathcal{H}^r_k$ \\
         $\mathcal{M}^{rz}$ & $\mathcal{Z}^r\!=\!\mathcal{X}^r\!\times\!\mathcal{C}^r$ & $\mathcal{U}^r_k$ & $\mathcal{Q}^{rz}_k$ & $\mathcal{H}^{rz}_k$
    \end{tabular}
    \caption{Overview of the MDPs that are used in the proof of Theorem \ref{prop_error_bounds}. $\mathcal{M}^z$ differs from $\mathcal{M}$ by the accumulation of costs in an additional cost state and an additional time-step $N+1$. The MDPs $\mathcal{M}^r$ and $\mathcal{M}^{rz}$ are finite gridding abstractions of $\mathcal{M}$ and $\mathcal{M}^z$, respectively.}
    \label{tab_mdp_relations}
\end{table}

The proof is structured as follows: 1. We define how a policy $\pi^{rz}$ associated to $\mathcal{M}^{rz}$ is executed on $\mathcal{M}$ and $\mathcal{M}^r$. We show that $\pi^{rz}$ achieves a safety of $\alpha$ on $\mathcal{M}^r$ if it is feasible for \eqref{eq_constrained_mdp_formulation} on $\mathcal{M}^{rz}$. 2. We bound the difference of the safety achieved by $\pi^{rz}$ on $\mathcal{M}$ and $\mathcal{M}^r$. 3. We combine these results to prove Theorem \ref{prop_error_bounds}.

\textbf{Step 1: Safety on $\bm{\mathcal{M}^r}$.} We define how a policy $\pi^{rz}$ associated to $\mathcal{M}^{rz}$ is executed on the MDPs $\mathcal{M}$ and $\mathcal{M}^r$. For this, we require some auxiliary notation. By abuse of notation, we denote by $\xi$ the following maps: If $x\in{\mathcal{X}}_j$, then $\xi({x})={x}_j^r$, while $\xi(x=x^{\dagger})=x^{\dagger}$; if $z\in\mathcal{Z}_j$, then $\xi(z)=z_j^r$; if $u\in\mathcal{U}_j$, then $\xi(u)=u_j^r$; if $c\in C_j$, then $\xi(c)=c_j^r$; and finally for any $h_k\in\mathcal{H}_k$ with $k\in[N]$, $\xi(h_k)=(\xi(x_0),\xi(u_{0}),\mydots,\xi(x_k))$. We denote by $\zeta:\mathcal{H}_k\rightarrow\mathcal{C}^r$ the cost of a history $h_k\in\mathcal{H}_k$ propagated through the gridded dynamics of $\mathcal{M}^{r}$, which is recursively defined by 
\begin{align*}
    \zeta(h_k) = \xi\Big(\zeta(h_{k-1}) + \ell_{k-1}\big(\xi(x_{k-1}),\xi(u_{k-1})\big)\Big),
\end{align*}
with $\zeta(h_0)=0$. The mapping $\psi^r:\mathcal{H}_k\rightarrow\mathcal{H}_k^{rz}$ discretizes the history and augments the discretized cost to every state, 
\begin{align*}
    \psi^r(h_k)=\Big(&(\xi(x_k),\zeta(h_k)),\xi(u_k),\mydots,(\xi(x_0),\zeta(h_0))\Big).
\end{align*}
The functions $\zeta$ and $\psi^r$ are extended to all $k\in[N]$.

\begin{defn}\label{def_policy_adaptation_r}
    Let $\pi^{rz}=\{\pi_0^{rz},\dots,\pi_N^{rz}\}$ with $\pi_k^{rz}:\mathcal{B}(\mathcal{U})\times\mathcal{H}^{rz}_k\rightarrow[0,1]$ for $k\in[N]$ be a policy associated to $\mathcal{M}^{rz}$. We call $\pi=\{\pi_0,\mydots,\pi_{N-1}\}$ with $\pi_k(\cdot|h_k)=\pi_k^{rz}(\cdot|\psi^r(h_k))$ for all $h_k\in\mathcal{H}_k$, $k\in[N-1]$, the adaptation of $\pi^{rz}$ to $\mathcal{M}$ and $\mathcal{M}^r$.
\end{defn}
The adaptation in Definition \ref{def_policy_adaptation_r} allows to execute any policy $\pi^{rz}$ associated to $\mathcal{M}^{rz}$ on both $\mathcal{M}$ and $\mathcal{M}^r$. Further, it leads to a counterpart of Lemma \ref{lem_occupation_measure_pushforward} for the MDPs $\mathcal{M}^r$ and $\mathcal{M}^{rz}$.
\begin{lem}
    Let $\pi^{rz}$ be associated to $\mathcal{M}^{rz}$ and $\pi$ be the respective adaptation to $\mathcal{M}^r$. Let $p_{0:N+1}^{rz}$ be the occupation measures on $\mathcal{H}^{rz}_{0:N+1}$ induced by executing $\pi^{rz}$ on $\mathcal{M}^{rz}$ and $p_{0:N}^r$ be the occupation measures on $\mathcal{H}^r_{0:N}$ induced by executing $\pi^{r}$ on $\mathcal{M}^r$. Then, $p_k^{rz} = \psi^r_\# p_k^r$ for all $k\in[N]$, where $\psi^r_\#$ denotes the push-forward through $\psi^r$.
    \label{lem_occupation_measure_pushforward_r}
\end{lem}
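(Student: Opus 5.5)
The plan is to mirror the induction in the proof of Lemma \ref{lem_occupation_measure_pushforward}, with $\psi$ replaced by $\psi^r$ and $(\mathcal{M},\mathcal{M}^z)$ replaced by $(\mathcal{M}^r,\mathcal{M}^{rz})$. Histories of $\mathcal{M}^r$ consist only of representative points, so $\xi$ acts as the identity on them. Hence on $\mathcal{H}^r_k$ the map $\psi^r$ just appends the gridded cost $\zeta(h_k)$ to each state. Since $\zeta$ is computed recursively from the states and actions in the history, $\psi^r$ restricted to $\mathcal{H}^r_k$ is injective onto its image in $\mathcal{H}^{rz}_k$. The inverse on that image is the map that drops the cost coordinates, and it is measurable because all spaces involved are finite.

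The base case is $k=0$. Here $p_0^r=\delta_{\xi(x_0)}$ and $p_0^{rz}=\delta_{(\xi(x_0),0)}=\delta_{\psi^r(x_0)}$, where I use the initial state of $\mathcal{M}^{rz}$ gridded consistently with $\zeta(h_0)=0$.

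For the induction step, assume $p_k^{rz}=\psi^r_\# p_k^r$ and fix $B\subseteq\mathcal{H}^{rz}_{k+1}$. I would write $p_{k+1}^{rz}(B)$ as a sum over $h_k^{rz}$, over $u_k\in\mathcal{U}^r$ and over $z_{k+1}$ of the product
\[
\mathbb{1}_B(h^{rz}_{k+1})\,\mathcal{Q}^{rz}_k(z_{k+1}|z_k,u_k)\,\pi^{rz}_k(u_k|h^{rz}_k)\,p^{rz}_k(h^{rz}_k).
\]
Then I change variables via the induction hypothesis, $h^{rz}_k=\psi^r(h^r_k)$, and use $\pi_k(\cdot|h^r_k)=\pi^{rz}_k(\cdot|\psi^r(h^r_k))$ from Definition \ref{def_policy_adaptation_r}.

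The key identity is
\[
\mathcal{Q}^{rz}_k\big((x^r_j,c')\,\big|\,(x_k,\zeta(h^r_k)),u_k\big)=\mathcal{Q}^r(x^r_j|x_k,u_k)\,\mathbb{1}\{c'=\zeta(h^r_{k+1})\},
\]
where $h^r_{k+1}$ is the history extended by $(u_k,x^r_j)$. This holds because $\mathcal{Q}^{rz}_k$ assigns to the cell $\mathcal{X}_j\times\mathcal{C}_l$ the mass $\mathcal{Q}(\mathcal{X}_j|x_k,u_k)\,\delta_{m}(\mathcal{C}_l)$, with $m=\min(C_{\max},\zeta(h^r_k)+\ell_k(x_k,u_k))$. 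The delta puts all its mass on the cell containing $m$, whose representative is $\xi(m)$. Since $x_k$ and $u_k$ are already representatives, $\xi(m)=\zeta(h^r_{k+1})$ by the recursive definition of $\zeta$. The unsafe state $x^\dagger$ is absorbing in both abstractions, so it is handled identically. With this identity the cost coordinate is forced, and the sum collapses to $p^r_{k+1}((\psi^r)^{-1}(B))$, which is the claim.

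I expect the main obstacle to be exactly this matching of the cost coordinate. One must check that the clipping at $C_{\max}$ and the gridding of $c$ in $\mathcal{Q}^{rz}_k$ coincide with $\zeta$. This is why the paper propagates the controller's cost with representative stage costs rather than true ones. If the definition of $\zeta$ omits the $\min(C_{\max},\cdot)$, I would note that sums never exceed $C_{\max}$ by the definition of $C_{\max}$, so the clipping is inactive. The final step $k=N+1$ is not needed for the statement, since it only concerns $k\in[N]$.
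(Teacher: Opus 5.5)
Your proposal is correct and follows the paper's route. The paper only says the proof is analogous to Lemma~\ref{lem_occupation_measure_pushforward}, and you carry out that same induction. You also make explicit the matching of the deterministic cost coordinate, which the paper leaves implicit.

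One correction concerns your fallback remark on clipping. If $\zeta$ is defined without $\min(C_{\max},\cdot)$, it is not true that the clip is inactive. The bound by $C_{\max}$ holds for the exact accumulated cost. But $\zeta$ accumulates rounded representatives $\xi(\cdot)$, which may lie above the values they replace, so rounding errors can add up over the horizon. For example, if $\ell_N\equiv 0$, the argument $\zeta(h_k)+\ell_k(\xi(x_k),\xi(u_k))$ can exceed $C_{\max}$, and there $\xi$ is not even defined. The right fix is to build the clip into the recursion,
$\zeta(h_{k+1})=\xi\bigl(\min(C_{\max},\zeta(h_k)+\ell_k(\xi(x_k),\xi(u_k)))\bigr)$,
so that it matches $\mathcal{Q}^z_k$. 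With that definition your identity $\xi(m)=\zeta(h^r_{k+1})$ holds by construction, and the rest of your argument goes through unchanged.
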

Lemma \ref{lem_occupation_measure_pushforward_r} is equivalent to Lemma \ref{lem_occupation_measure_pushforward} applied to $\mathcal{M}^{rz}$ and $\mathcal{M}^r$; the proof is analogous.
\begin{cor}
    The policy $\pi^{rz}$ has the same safety on $\mathcal{M}^{rz}$ as its adaptation to $\mathcal{M}^r$ on $\mathcal{M}^r$; if $\pi^{rz}$ is feasible for \eqref{eq_constrained_mdp_formulation}, then applying its adaptation to $\mathcal{M}^r$ on $\mathcal{M}^r$ yields a safety of at least $\alpha$.
    \label{cor_same_safety_Mrz_Mr}
\end{cor}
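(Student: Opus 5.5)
The plan is to deduce the corollary directly from Lemma \ref{lem_occupation_measure_pushforward_r}, mirroring how Lemma \ref{lem_adaptations_feasible} followed from Lemma \ref{lem_occupation_measure_pushforward}. Safety of a policy on $\mathcal{M}^r$ is the integral of $\mathbb{1}_{\mathcal{X}^s}(x_N)$ against $p_N^r$. Here we use construction (i): $x^\dagger$ is absorbing, so the product of indicators reduces to the terminal one, and the same holds in the gridded MDP because $\xi(x^\dagger)=x^\dagger$ and $\mathcal{Q}^r$ inherits absorption. Safety on $\mathcal{M}^{rz}$ is the integral of $\mathbb{1}_{\mathcal{Z}^s}(z_N)$ against $p_N^{rz}$.

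\textbf{Step 1: equal safety.} Since $\psi^r$ maps the terminal state $x_N$ of a history of $\mathcal{M}^r$ to $(x_N,\zeta(h_N))$, we have $\mathbb{1}_{\mathcal{Z}^s}(\psi^r(h_N)_N)=\mathbb{1}_{\mathcal{X}^s}(x_N)$. A representative $x\in\mathcal{X}^r$ is safe exactly when it differs from $x^\dagger$, because the cells partition $\mathcal{X}^s$. The change-of-variables formula for push-forward measures, combined with $p_N^{rz}=\psi^r_\# p_N^r$ from Lemma \ref{lem_occupation_measure_pushforward_r}, then gives
\begin{align*}
\int_{\mathcal{H}^{rz}_N}\mathbb{1}_{\mathcal{Z}^s}(z_N)\,p_N^{rz}(dh^{rz}_N)=\int_{\mathcal{H}^r_N}\mathbb{1}_{\mathcal{X}^s}(x_N)\,p_N^r(dh^r_N).
\end{align*}
This proves the first claim.

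\textbf{Step 2: feasibility implies safety $\alpha$.} The feasibility constraint \eqref{eq_constrained_mdp_formulation_constraint} on $\mathcal{M}^{rz}$ reads $\mathbb{E}[u_N\mathbb{1}_{\mathcal{Z}^s}(z_N)]\ge\alpha$. Because $u_N\in\{0,1\}$ and $\mathcal{Q}^{rz}_N$ leaves the state unchanged, this expectation equals
\begin{align*}
\int \pi_N^{rz}(1|h^{rz}_N)\,\mathbb{1}_{\mathcal{Z}^s}(z_N)\,p_N^{rz}(dh^{rz}_N).
\end{align*}
Since $\pi_N^{rz}(1|\cdot)\le 1$, this is at most the safety $\int\mathbb{1}_{\mathcal{Z}^s}(z_N)\,dp_N^{rz}$. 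Hence the safety on $\mathcal{M}^{rz}$ is at least $\alpha$, and by Step 1 so is the safety on $\mathcal{M}^r$.

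\textbf{Main obstacle.} The only delicate point is making sure Lemma \ref{lem_occupation_measure_pushforward_r} applies literally. Histories of $\mathcal{M}^r$ carry no cost state, and the cost component is reconstructed through $\zeta$ using representative states and inputs. This matches the deterministic cost update of $\mathcal{Q}^{rz}_k$ evaluated at the representatives, which is precisely why the controller propagates the cost of representative states. I would briefly recheck this consistency when invoking the lemma. With it in place, the rest is bookkeeping.
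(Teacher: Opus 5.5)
Your proposal is correct and follows the paper's route: the corollary comes from Lemma \ref{lem_occupation_measure_pushforward_r} via a push-forward change of variables, just as Lemma \ref{lem_adaptations_feasible} follows from Lemma \ref{lem_occupation_measure_pushforward}. Your Step 2 uses $\pi_N^{rz}(1|\cdot)\le 1$ to show that the weighted constraint $\mathbb{E}[u_N\mathbb{1}_{\mathcal{Z}^s}(z_N)]$ lower-bounds the unweighted safety, a step the paper leaves implicit in Step 3.
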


\textbf{Step 2: Safety Difference between $\bm{\mathcal{M}^r}$ and $\bm{\mathcal{M}}$.}
We will now prove that the difference of safety in executing any adaptation of $\pi^{rz}$ on $\mathcal{M}^r$ and on $\mathcal{M}$ is bounded by the expression in Proposition \ref{prop_error_bounds}. Then, based on Lemma \ref{cor_same_safety_Mrz_Mr}, also the difference of safety between $\mathcal{M}^{rz}$ and $\mathcal{M}$ is bounded.

We first show that the adapted policy only depends on the sequence of visited state and action partitions.
\begin{lem}
    Let $\pi^{rz}=\{\pi_0^{rz},\dots,\pi_N^{rz}\}$ be a policy associated to $\mathcal{M}^{rz}$, which is adapted to $\mathcal{M}$ and $\mathcal{M}^r$ through $\pi=\{\pi_0,\dots,\pi_{N-1}\}$. For any $k\in[N-1]$ and histories $h_k,h_k'\in\mathcal{H}_k$ with $\xi(h_k)=\xi(h_k')$ it holds that $\pi_k(\cdot|h_k)=\pi_k(\cdot|h_k')$.
    \label{lem_same_history_same_policy_r}
\end{lem}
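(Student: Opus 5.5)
The plan is to unfold the definition of the adaptation and show that $\psi^r(h_k)$ depends on $h_k$ only through $\xi(h_k)$. By Definition~\ref{def_policy_adaptation_r}, $\pi_k(\cdot|h_k)=\pi_k^{rz}(\cdot|\psi^r(h_k))$. It therefore suffices to prove that $\xi(h_k)=\xi(h_k')$ implies $\psi^r(h_k)=\psi^r(h_k')$. Once this holds, the two kernels are evaluated at the same argument and coincide as measures on $\mathcal{B}(\mathcal{U})$.

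The vector $\psi^r(h_k)$ has two kinds of entries, which I would treat separately.
\begin{enumerate}
\item The gridded states $\xi(x_j)$ and inputs $\xi(u_j)$ are exactly the components of $\xi(h_k)$, so they agree for $h_k$ and $h_k'$ by assumption.
\item The cost entries are $\zeta(h_j)$ for $j\le k$.
\end{enumerate}

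For the cost entries I would show by induction on $j$ that $\zeta(h_j)=\zeta(h_j')$, where $h_j,h_j'$ are the length-$j$ prefixes of $h_k,h_k'$. Their $\xi$-images agree because $\xi$ acts componentwise.
\begin{itemize}
\item Base case: $\zeta(h_0)=\zeta(h_0')=0$.
\item Inductive step: the recursion $\zeta(h_j)=\xi\big(\zeta(h_{j-1})+\ell_{j-1}(\xi(x_{j-1}),\xi(u_{j-1}))\big)$ uses only $\zeta(h_{j-1})$ and the gridded pair $(\xi(x_{j-1}),\xi(u_{j-1}))$. The first agrees by the induction hypothesis and the second by assumption, so $\zeta(h_j)=\zeta(h_j')$.
\end{itemize}

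This is precisely why the cost state is propagated with representative costs instead of the costs actually incurred. If $\ell_{j-1}(x_{j-1},u_{j-1})$ were used, two histories in the same partitions could accumulate different costs and fall into different cost cells. One small point needs care: $c+\ell$ may exceed $C_{\max}$, so it should be truncated by $\min(C_{\max},\cdot)$ before applying $\xi$ on $\mathcal{C}$. This makes $\zeta$ well defined but does not affect the argument.

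With both kinds of entries equal, $\psi^r(h_k)=\psi^r(h_k')$ and the claim follows. I see no real obstacle here; the only content is the observation in the induction that $\zeta$ is a function of $\xi(h_k)$ alone.
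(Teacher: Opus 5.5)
Your proposal is correct and matches the paper's proof: both use Definition~\ref{def_policy_adaptation_r} to reduce the claim to $\psi^r(h_k)=\psi^r(h_k')$, noting that the gridded states and inputs coincide and that $\zeta$ depends only on the representative pairs $(\xi(x_j),\xi(u_j))$. The paper states the step $\zeta(h_k)=\zeta(h_k')$ in one line, whereas you prove it by explicit induction over prefixes, and your remark about truncating at $C_{\max}$ is only a harmless well-definedness detail.
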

\begin{proof}
    This follows immediately by definition of $\zeta$ and $\psi^r$ as well as Definition \ref{def_policy_adaptation_r}: Given $\xi(h_k)=\xi(h_k')$, both histories are associated to the same state and action partitions. Therefore, $\ell(\xi(x_j),\xi(u_j))=\ell(\xi(x_j'),\xi(u_j'))$ for all $j<k$, and the propagated cost through the discretized dynamics is the same, $\zeta(h_k)=\zeta(h_k')$. Further, $\psi^r(h_k)=\psi^r(h_k')$. Then, $\pi_k(\cdot|h_k)=\pi_k^{rz}(\cdot|\psi^r(h_k))=\pi_k^{rz}(\cdot|\psi^r(h_k'))=\pi_k(\cdot|h_k')$.
\end{proof}
\begin{lem}[Theorem 1 in \cite{abate2008probabilistic}]
    Given the MDP $\mathcal{M}$ and a policy $\pi\in\Pi$, let $V_k:\mathcal{H}_k\rightarrow[0,1]$, $k\in[N]$, be recursively defined by
    \begin{subequations}
        \begin{align}
            V_N(h_N) &= 1 \\
            V_k(h_k) &= \int_{\mathcal{U}}\int_{\mathcal{X}^s}V_{k+1}(h_{k+1})\mathcal{Q}(dx_{k+1}|x_k,u_k)\pi_k(du_k|h_k)
        \end{align}%
        \label{eq_dp_recursion_safety_eval}%
    \end{subequations}%
    for all ${h}_k\in{\mathcal{H}}_k^s$, $k\in[N-1]$, ${h}_N\in{\mathcal{H}}_N^s$; for all $k\in[N]$ let $V_k(h_k)=0$ if $h_k\notin\mathcal{H}_k^s$. Then, $V_0(h_0)=\mathbb{P}_{x_0}^{\pi}(x_{0:N}\in \mathcal{X}^{s})$. 
    \label{lem_invariance_probability_computation}
\end{lem}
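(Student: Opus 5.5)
This is the standard safety-evaluation recursion, so I will prove it by backward induction. The target identity, at every stage $k$ and for safe histories $h_k\in\mathcal{H}_k^s$, is
\begin{align*}
V_k(h_k)=\mathbb{P}_{x_0}^{\pi}\left(x_{k+1},\mydots,x_N\in\mathcal{X}^s\,\middle|\,h_k\right).
\end{align*}
The conditional probability on the right is defined through the regular conditional distribution of the future given $h_k$. By the construction of $\mathbb{P}_{x_0}^{\pi}$ (Ionescu--Tulcea), this distribution is generated by the kernels $\pi_k(du_k|h_k)$ and $\mathcal{Q}(dx_{k+1}|x_k,u_k)$. Equivalently, I will show
\begin{align*}
V_k(h_k)=\mathbb{E}_{x_0}^{\pi}\left[\prod_{j=0}^N\mathbb{1}_{\mathcal{X}^s}(x_j)\,\middle|\,h_k\right]
\end{align*}
for all $h_k$, where unsafe histories give $0$ on both sides.

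The base case is $k=N$. Here the conditional expectation equals $\prod_{j=0}^N\mathbb{1}_{\mathcal{X}^s}(x_j)$, which is $1$ on $\mathcal{H}_N^s$ and $0$ otherwise, and this matches the definition of $V_N$.

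For the inductive step, assume the identity holds at $k+1$ and take $h_k\in\mathcal{H}_k^s$. Apply the tower property, conditioning first on $h_{k+1}=(x_{k+1},u_k,h_k)$. The conditional law of $(u_k,x_{k+1})$ given $h_k$ is $\pi_k(du_k|h_k)\mathcal{Q}(dx_{k+1}|x_k,u_k)$. This gives
\begin{align*}
\int_{\mathcal{U}}\int_{\mathcal{X}}V_{k+1}(h_{k+1})\,\mathcal{Q}(dx_{k+1}|x_k,u_k)\,\pi_k(du_k|h_k).
\end{align*}
Since $h_k$ is safe, $h_{k+1}$ is safe exactly when $x_{k+1}\in\mathcal{X}^s$. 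For any other $x_{k+1}$ we have $V_{k+1}(h_{k+1})=0$, so the inner integral reduces to one over $\mathcal{X}^s$, which is exactly the recursion. If instead $h_k\notin\mathcal{H}_k^s$, then $h_k$ already contains an unsafe state, so both sides are $0$.

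Finally, at $k=0$ the history is $h_0=x_0$ and $\mathbb{P}_{x_0}^\pi$ is concentrated on it, so the conditional expectation is unconditional. This gives $V_0(h_0)=\mathbb{P}_{x_0}^{\pi}(x_{0:N}\in\mathcal{X}^s)$.

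The only delicate point is measurability. Each $V_k$ must be Borel on $\mathcal{H}_k$ for the integrals to make sense, and the disintegration of $\mathbb{P}_{x_0}^\pi$ along histories must be justified. Both follow by induction from standard facts: integrating a bounded measurable function against a Borel stochastic kernel yields a measurable function, and the canonical construction in \cite{hernandez2012discrete} provides the disintegration.
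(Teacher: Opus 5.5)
Your backward induction is correct, and in substance it is the standard argument. The paper gives no proof of this lemma and imports it from the cited reference, which argues by the same backward induction on the conditional probability of remaining safe.

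Where your write-up does more than the citation is generality. The cited result is formulated for Markov policies with value functions on the state space. The lemma as stated here uses $\pi_k(du_k|h_k)$ and $V_k$ defined on $\mathcal{H}_k$. It is later applied to the adapted policy $\pi_k(\cdot|h_k)=\pi_k^{rz}(\cdot|\psi^r(h_k))$, which depends on the whole history through $\zeta(h_k)$.

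You induct over histories and use the kernel (Ionescu--Tulcea) version of the conditional law of the future given $h_k$. As a result, the identity $V_k(h_k)=\mathbb{E}_{x_0}^{\pi}[\prod_{j=0}^N\mathbb{1}_{\mathcal{X}^s}(x_j)\mid h_k]$ holds for every $h_k$, not only almost surely. Your argument therefore covers exactly the history-dependent, randomized case the paper needs, without first reducing to a Markov policy on an augmented state.
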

The safety of a policy on $\mathcal{M}^r$ can be computed equivalently. We denote the corresponding value function by $V_k^r$.
\begin{lem}
    \label{prop_same_history_same_safety}
    Let $h_k,h_k'\in\mathcal{H}_k$ be two histories with $\xi(h_k)=\xi(h_k')$ and $x_k=x_k'$ for some $k\in[N]$. Let a policy $\pi^{rz}$ associated to $\mathcal{M}^{rz}$ be adapted to $\mathcal{M}$ through $\pi=\{\pi_0,\dots,\pi_{N-1}\}$. Let $V_k$ be the value functions in recursion \eqref{eq_dp_recursion_safety_eval} associated to $\mathcal{M}$ under policy $\pi$. Then, $V_k(h_k)=V_k(h_k')$ for all $k\in[N]$.
\end{lem}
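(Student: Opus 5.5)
The plan is a backward induction on $k$, from $k=N$ down to the given index. The induction hypothesis at level $k+1$ is: for any two histories $h_{k+1},h_{k+1}'$ with $\xi(h_{k+1})=\xi(h_{k+1}')$ and $x_{k+1}=x_{k+1}'$, we have $V_{k+1}(h_{k+1})=V_{k+1}(h_{k+1}')$.

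\textbf{Base case $k=N$.} Take $h_N,h_N'$ with $\xi(h_N)=\xi(h_N')$. Every past state in both histories lies in the same cell. The partition of $\mathcal{X}$ consists of cells $\mathcal{X}_j\subseteq\mathcal{X}^s$ together with the singleton $\{x^\dagger\}$, so $x_j\in\mathcal{X}^s$ if and only if $x_j'\in\mathcal{X}^s$. Hence $h_N\in\mathcal{H}_N^s$ if and only if $h_N'\in\mathcal{H}_N^s$, and $V_N$ takes the same value, $1$ or $0$, on both.

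\textbf{Inductive step.} Fix $k<N$ and $h_k,h_k'$ with $\xi(h_k)=\xi(h_k')$ and $x_k=x_k'$.
\begin{enumerate}
\item If $h_k\notin\mathcal{H}_k^s$, the same cell argument gives $h_k'\notin\mathcal{H}_k^s$, and both values are $0$.
\item Otherwise, Lemma \ref{lem_same_history_same_policy_r} gives $\pi_k(\cdot|h_k)=\pi_k(\cdot|h_k')$, and $x_k=x_k'$ gives $\mathcal{Q}(\cdot|x_k,u_k)=\mathcal{Q}(\cdot|x_k',u_k)$ for every $u_k$. So the outer measure $\pi_k(du_k|\cdot)$ and the inner measure $\mathcal{Q}(dx_{k+1}|x_k,\cdot)$ in \eqref{eq_dp_recursion_safety_eval} coincide for the two histories.
\item It remains to compare the integrands pointwise. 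For a common $(u_k,x_{k+1})$, the extended histories $h_{k+1}=(x_{k+1},u_k,h_k)$ and $h_{k+1}'=(x_{k+1},u_k,h_k')$ satisfy $\xi(h_{k+1})=(\xi(x_{k+1}),\xi(u_k),\xi(h_k))=\xi(h_{k+1}')$ and share the last state $x_{k+1}$. The induction hypothesis then gives $V_{k+1}(h_{k+1})=V_{k+1}(h_{k+1}')$, so the two integrals agree and $V_k(h_k)=V_k(h_k')$.
\end{enumerate}

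\textbf{Main obstacle.} The delicate point is that the recursion is phrased on histories, while the equality needs only cell-level information about the past. The hypothesis must keep the exact current state $x_k$, because the kernel depends on it, but it may coarsen all earlier states to their cells. The extension step above shows this mixed exact/coarse condition is preserved when one more state and action are appended.

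The other point to check is that the adapted policy depends on the past only through $\xi(h_k)$. This includes the virtually propagated cost $\zeta$, which is built from representative states and actions. This is exactly why the paper updates the cost state with representative costs rather than incurred ones, and Lemma \ref{lem_same_history_same_policy_r} covers it.
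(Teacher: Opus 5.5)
Your proposal is correct and follows the paper's proof: a backward induction on the same mixed hypothesis, namely $\xi(h_{k+1})=\xi(h_{k+1}')$ together with $x_{k+1}=x_{k+1}'$. Like the paper, you use Lemma \ref{lem_same_history_same_policy_r} to match the policies, $x_k=x_k'$ to match the kernels, and the fact that every cell $\mathcal{X}_j$ lies in $\mathcal{X}^s$ (with $x^\dagger$ in its own cell) to match the safety of the two histories, including at $k=N$. Your separate treatment of the unsafe case, and your check that $(x_{k+1},u_k,h_k)$ and $(x_{k+1},u_k,h_k')$ again satisfy the hypothesis, only spell out steps the paper leaves implicit.
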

\begin{proof}
    The proof uses the induction hypothesis that $V_{k+1}(h_{k+1})=V_{k+1}(h_{k+1}')$ for all $h_{k+1},h_{k+1}'\in\mathcal{H}_{k+1}$ with $\xi(h_{k+1})=\xi(h_{k+1}')$ and $x_{k+1}=x_{k+1}'$ for some $k\in[N-1]$. Then, for any $h_k,h_k'\in\mathcal{H}_{k}$ with $\xi(h_k)=\xi(h_k')$ and $x_k=x_k'$, recursion \eqref{eq_dp_recursion_safety_eval} yields
    \begin{align*}
        V_k(h_k) &= \int_{\mathcal{U}}\int_{\mathcal{X}^s}V_{k+1}(h_{k+1})\mathcal{Q}(dx_{k+1}|x_k,u_k){\pi}_k(du_k|h_k)
        \\
        &= \int_{\mathcal{U}}\int_{\mathcal{X}^s}V_{k+1}(h_{k+1}')\mathcal{Q}(dx_{k+1}|x_k',u_k){\pi}_k(du_k|h_k')
        \\
        &= V_k(h_k'),
    \end{align*}
    since $x_k=x_k'$, $\pi_k(\cdot|h_k)=\pi_k(\cdot|h_k')$ by Lemma \ref{lem_same_history_same_policy_r}, and $V_{k+1}(h_{k+1})=V_{k+1}(h_{k+1}')$ when $x_{k+1}=x'_{k+1}$ by the induction hypothesis. Note that the partitions $\mathcal{X}_i$, $i=1,\dots,M_x$ are subsets of $\mathcal{X}^s$. Therefore, if $\xi(h_k)=\xi(h_k')$ and $h_k\in\mathcal{H}^s_k$, then also $h'_k\in\mathcal{H}^s_k$. Starting the induction at $k=N$, either $V_N(h_N)=V_N(h_N')=1$ or $V_N(h_N)=V_N(h_N')=0$ depending on whether $h_N\in\mathcal{H}^s_N$ or not, which satisfies the induction hypothesis.
\end{proof}
Under Assumption \ref{ass_lipschitz_cont}, the value function $V_k$ on $\mathcal{M}$ satisfies a Lipschitz condition within every partition.
\begin{lem}
    Let a policy $\pi^{rz}$ associated to $\mathcal{M}^{rz}$ be adapted to $\mathcal{M}$ through $\pi=\{\pi_0,\dots,\pi_{N-1}\}$. Let $V_k$ be the value functions in \eqref{eq_dp_recursion_safety_eval} associated to $\mathcal{M}$ under policy $\pi$. Under Assumption \ref{ass_lipschitz_cont}, for all $k\in[N]$ and $h_k,h_k'\in\mathcal{H}_k$ with $\xi(h_k)=\xi(h_k')$, it holds that
    \begin{align*}
        |V_k(h_k) - V_k(h_k')| &\leq M_x\overline{\mu}h_x\Delta_x
    \end{align*}
    \label{lem_value_function_lipschitz}
\end{lem}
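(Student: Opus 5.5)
The bound should come from the fact that, among histories in the same partition sequence, only the current state differs. Everything else (the policy and the value at the next step) can be made to coincide, and the remaining difference is controlled by the Lipschitz density of Assumption \ref{ass_lipschitz_cont}.

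\textbf{Reduction to the last coordinate.} Fix $k\in[N-1]$ and $h_k,h_k'$ with $\xi(h_k)=\xi(h_k')$.
\begin{itemize}
\item If $x_k=x^\dagger$, then also $x_k'=x^\dagger$, so both histories are unsafe and $V_k(h_k)=V_k(h_k')=0$. The same holds if any earlier state is $x^\dagger$, since the partitions of $\mathcal{X}^s$ never contain $x^\dagger$. Hence we may assume both histories lie in $\mathcal{H}_k^s$.
\item By Lemma \ref{lem_same_history_same_policy_r}, $\pi_k(\cdot|h_k)=\pi_k(\cdot|h_k')$.
\item For any successor $x_{k+1}$, the histories $h_{k+1}=(x_{k+1},u_k,h_k)$ and $\tilde h_{k+1}=(x_{k+1},u_k,h_k')$ satisfy $\xi(h_{k+1})=\xi(\tilde h_{k+1})$ and share the same last state. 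Lemma \ref{prop_same_history_same_safety} then gives $V_{k+1}(h_{k+1})=V_{k+1}(\tilde h_{k+1})$.
\end{itemize}
With these three facts, recursion \eqref{eq_dp_recursion_safety_eval} yields
\begin{align*}
|V_k(h_k)-V_k(h_k')|\le \int_{\mathcal{U}}\int_{\mathcal{X}^s} V_{k+1}(x_{k+1},u_k,h_k)\,\bigl|q(x_{k+1}|x_k,u_k)-q(x_{k+1}|x_k',u_k)\bigr|\,dx_{k+1}\,\pi_k(du_k|h_k).
\end{align*}

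\textbf{Bounding the integral.} Use $0\le V_{k+1}\le 1$ and the Lipschitz bound $|q(\cdot|x_k,u_k)-q(\cdot|x_k',u_k)|\le h_x\|x_k-x_k'\|$. Since $x_k,x_k'$ lie in the same partition $\mathcal{X}_j$, we have $\|x_k-x_k'\|\le\Delta_x$. It remains to integrate over $\mathcal{X}^s=\cup_{j=1}^{M_x}\mathcal{X}_j$, whose Lebesgue measure is at most $M_x\overline{\mu}$. This gives $h_xM_x\overline\mu\Delta_x$ for the inner integral, and the outer integral against the probability measure $\pi_k(du_k|h_k)$ preserves this bound.

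\textbf{The case $k=N$.} Here $V_N$ is the indicator of $\mathcal{H}_N^s$. Safety is determined by the partition sequence, so $V_N(h_N)=V_N(h_N')$ and the bound holds trivially.

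\textbf{Main obstacle.} The only delicate step is the pairing of successor histories, i.e. applying Lemma \ref{prop_same_history_same_safety} at time $k+1$ while both continuations use the same $x_{k+1}$ and $u_k$. One must check that its hypotheses hold: $\xi$ agrees on the appended pair trivially, and the last states coincide by construction. No induction on $k$ is needed: the comparison reduces to a single step because the future values coincide exactly, so no error accumulates.
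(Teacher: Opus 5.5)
Your proposal is correct and follows essentially the same route as the paper. Both arguments use equality of the policies via Lemma \ref{lem_same_history_same_policy_r}, pair the successor histories through Lemma \ref{prop_same_history_same_safety}, bound $V_{k+1}\le 1$, and integrate the Lipschitz density bound over the partitions $\mathcal{X}_j$ to obtain $M_x\overline{\mu}h_x\|x_k-x_k'\|\le M_x\overline{\mu}h_x\Delta_x$. Your explicit handling of unsafe histories and of the case $k=N$ covers cases the paper leaves implicit.
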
\allowdisplaybreaks
\begin{proof}
    Let $\mu$ be the Lebesgue measure and $\mu_j = \mu(\mathcal{X}_j)$ for $j=1,\mydots,M_x$. Note that, by construction, $V_k(\cdot)\in[0,1]$. Then,
    \begin{subequations}
        \begin{align}
            &|V_k(h_k)-V_k(h_k')| 
            \\ & = \bigg|\int_{\mathcal{U}}\!\sum_{j=1,\dots,M_x}\!\int_{\mathcal{X}_j}\!V_{k+1}(h_{k+1})\mathcal{Q}(dx_{k+1}|x_k,u_k){\pi}_k(du_k|h_k) \nonumber 
            \\& \quad -\!\int_{\mathcal{U}}\!\sum_{j=1,\dots,M_x}\!\int_{\mathcal{X}_j}\hspace{-.4em}\!V_{k+1}(h'_{k+1})\mathcal{Q}(dx_{k+1}|x_k',u_k){\pi}_k(du_k|h_k')\bigg| 
            \label{eq_proof_error_bounds_V_lipschitz1}\\
            & \leq \int_{\mathcal{U}}\sum_{j=1,\dots,M_x}\int_{\mathcal{X}_j}|V_{k+1}(h_{k+1})||q(x_{k+1}|x_k,u_k) 
            \label{eq_proof_error_bounds_V_lipschitz2}
            \\ & \quad -q(x_{k+1}|x_k',u_k)|dx_{k+1}|{\pi}_k(du_k|h_k)| \nonumber \\
            & \leq \int_{\mathcal{U}}\sum_{j=1,\dots,M_x}\int_{\mathcal{X}_j} |q(x_{k+1}|x_k,u_k)
            \label{eq_proof_error_bounds_V_lipschitz3}
            \\ & \quad -q(x_{k+1}|x_k',u_k)|dx_{k+1}|{\pi}_k(du_k|h_k)| \nonumber \\
            &\leq \int_{\mathcal{U}}\sum_{j=1,\dots,M_x}\mu_j h_x||x_k-x_k'|||{\pi}_k(du_k|h_k)|
            \label{eq_proof_error_bounds_V_lipschitz4}\\
            &\leq M_x\overline{\mu} h_x||x_k-x_k'||
            \label{eq_proof_error_bounds_V_lipschitz5}\\
            &\leq M_x\overline{\mu}h_x\Delta_x,\label{eq_proof_error_bounds_V_lipschitz6}
        \end{align}
    \end{subequations}
    where \eqref{eq_proof_error_bounds_V_lipschitz1} holds by definition of $V_k$, \eqref{eq_proof_error_bounds_V_lipschitz2} by the triangle inequality, Lemma \ref{lem_same_history_same_policy_r} and $V_{k+1}(h_{k+1})=V_{k+1}(h_{k+1}')$ when $x_{k+1}=x'_{k+1}$ by Lemma \ref{prop_same_history_same_safety}, \eqref{eq_proof_error_bounds_V_lipschitz3} by $|V_{k+1}(\cdot)|\leq 1$, \eqref{eq_proof_error_bounds_V_lipschitz4} by Assumption \ref{ass_lipschitz_cont}, \eqref{eq_proof_error_bounds_V_lipschitz5} by definition of $\overline{\mu}$ and the fact that $\pi_k(\cdot|h_k)$ is a probability measure, and \eqref{eq_proof_error_bounds_V_lipschitz6} by definition of the maximum diameter $\Delta_x$ of the state partitions and $\xi(h_k)=\xi(h_k')$.
\end{proof}
The fact that $V_k$ is Lipschitz also provides a bound on the difference to $V^r_k$ for every $k\in[N]$.
\begin{lem}
    \label{lem_boundedness_of_error_to_Mr}
    Let a policy $\pi^{rz}$ associated to $\mathcal{M}^{rz}$ be adapted to $\mathcal{M}$ and $\mathcal{M}^r$ through $\pi=\{\pi_0,\dots,\pi_{N-1}\}$. Let $V_k$ and $V_k^r$ be the value functions in \eqref{eq_dp_recursion_safety_eval} associated to $\mathcal{M}$ and $\mathcal{M}^r$ under policy $\pi$, respectively. Under Assumption \ref{ass_lipschitz_cont}, for all $k\in[N]$ and all $h_k\in\mathcal{H}_k^s$, it holds that 
    \begin{align*}
        |V_k(h_k)-V^r_k(\xi(h_k))|\leq (N-k)h_xM_x\overline{\mu}\Delta_x.
    \end{align*}
\end{lem}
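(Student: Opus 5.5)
We argue by backward induction on $k$, starting at $k=N$ and moving down to $k=0$. At $k=N$ we have $V_N(h_N)=1=V_N^r(\xi(h_N))$ for every safe history $h_N\in\mathcal{H}_N^s$, since the partitions $\mathcal{X}_j$ cover exactly $\mathcal{X}^s$. So $\xi(h_N)$ is safe in $\mathcal{M}^r$ and the error is $0=(N-N)h_xM_x\overline{\mu}\Delta_x$.

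For the inductive step, assume the bound holds at $k+1$ and fix $h_k\in\mathcal{H}_k^s$. Let $x_k^r=\xi(x_k)$ denote the representative of the cell containing $x_k$. Choose an auxiliary history $\tilde h_k$ with $\xi(\tilde h_k)=\xi(h_k)$ whose last state is exactly $x_k^r$. This is possible because $x_k^r\in\mathcal{X}_j$, so replacing $x_k$ by $x_k^r$ keeps the partition sequence unchanged. We then split the error as
\begin{align*}
|V_k(h_k)-V^r_k(\xi(h_k))|\le |V_k(h_k)-V_k(\tilde h_k)|+|V_k(\tilde h_k)-V^r_k(\xi(h_k))|.
\end{align*}
Lemma \ref{lem_value_function_lipschitz} bounds the first term by $M_x\overline{\mu}h_x\Delta_x$. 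For the second term we write out both recursions. By Lemma \ref{lem_same_history_same_policy_r}, $\pi_k(\cdot|\tilde h_k)=\pi_k(\cdot|h_k)$, and on $\mathcal{M}^r$ the same adapted policy is applied at $\xi(h_k)$. The kernel of $\mathcal{M}^r$ satisfies $\mathcal{Q}^r(x_j^r|x_k^r,u)=\mathcal{Q}(\mathcal{X}_j|x_k^r,u)$. Hence
\begin{align*}
V_k(\tilde h_k)-V_k^r(\xi(h_k))=\int_{\mathcal{U}}\sum_{j=1}^{M_x}\int_{\mathcal{X}_j}\big(V_{k+1}(\tilde h_{k+1})-V^r_{k+1}(\xi(\tilde h_{k+1}))\big)\mathcal{Q}(dx_{k+1}|x_k^r,u_k)\pi_k(du_k|h_k).
\end{align*}
Here we use that for $x_{k+1}\in\mathcal{X}_j$ we have $\xi(\tilde h_{k+1})=(x_j^r,\xi(u_k),\xi(h_k))$. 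The unsafe state $x^\dagger$ contributes zero on both sides.

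The induction hypothesis bounds the integrand by $(N-k-1)h_xM_x\overline{\mu}\Delta_x$. Integrating against a sub-probability measure preserves this bound. Adding the first term then gives $(N-k)h_xM_x\overline{\mu}\Delta_x$, which closes the induction.

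The main obstacle is a consistency issue in the action argument. Under the adaptation, $\mathcal{M}^r$ evaluates the policy on representative inputs $\xi(u_k)$, while $\mathcal{M}$ applies $u_k$ itself. We must check that this matches the convention of Definition \ref{def_policy_adaptation_r}. Since $\pi^{rz}$ only charges points of $\mathcal{U}^r$, we have $u_k=\xi(u_k)$ almost surely, so the same measure appears in both recursions and the kernels are evaluated at the same input. We also need that $\tilde h_{k+1}$ and $\xi(\tilde h_{k+1})$ are genuinely matched histories, so that the hypothesis applies pointwise. This holds because the cost component $\zeta$ depends only on partition indices, as shown in the proof of Lemma \ref{lem_same_history_same_policy_r}.
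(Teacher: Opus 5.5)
Your proof is correct and follows the paper's argument. The paper also uses backward induction and splits the error at an intermediate history. The first piece is a within-cell term bounded by Lemma \ref{lem_value_function_lipschitz}. The second is bounded by the induction hypothesis, after rewriting the $\mathcal{M}^r$ recursion as an integral against $\mathcal{Q}(\cdot|x_k^r,u_k)$, using the policy's concentration on representative inputs. The only cosmetic difference is the intermediate point: the paper uses the fully discretized history $\xi(h_k)$, phrased via the piecewise-constant extension $\hat V^r_k$, while you replace only the last state. By Lemma \ref{prop_same_history_same_safety}, $V_k(\tilde h_k)=V_k(\xi(h_k))$, so the two choices coincide.
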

\begin{proof}    
    Let $\hat{V}^r_k:\mathcal{H}_k\rightarrow[0,1]$ denote the piece-wise constant extension of the function $V_k^{r}$ to the continuous space $\mathcal{H}_k$,  $\hat{V}^r_k(h_k)=V_k^{r}(\xi(h_k))$, for all $k\in[N]$. In the same vein as in \cite{abate2010approximate}, we show by induction that the difference between the value functions $V_k$ and $\hat{V}_k^{r}$ is bounded for all $k\in[N]$. For this purpose, we require following preliminary result: For any $h_k\in\mathcal{H}_k$, $k\in[N-1]$, $j=1,\dots,M_x$, $i=1,\dots,M_u$,
    \begin{subequations}%
        \label{eq_proof_error_bounds_prelim_for_step_2}%
        \begin{align}
            &\hat{V}_{k+1}^r({h}_{k+1})\mathcal{Q}^r({x}_j^r|x_k,u^r_i){\pi}_k(u^r_i|h_{k}) 
            \\ 
            &= \int_{\mathcal{U}_i}\int_{\mathcal{X}_j}\hat{V}^r_{k+1}({h}_{k+1})
            \mathcal{Q}(dx_{k+1}|x_k,u_k){\pi}_k(du_k|h_{k}),
        \end{align}
    \end{subequations}
    since, for any $h_k=(x_k,u_{k-1},\dots,x_0)$, $\hat{V}^r_{k+1}({h}_{k+1})$ is constant across all $h_{k+1}=(x_{k+1},u_k,\dots,x_0)$ with $x_{k+1}\in\mathcal{X}_j$ and $u_k\in\mathcal{U}_i$ by definition, $\mathcal{Q}^r({x}_j^r|x_k,{u}_k)=\mathcal{Q}(\mathcal{X}_j|x_k,{u}_k)$ by definition and $\pi_k(u_i^r|h_{k})=\pi_k(\mathcal{U}_i|h_k)$ is concentrated at the representative inputs by construction of the adaptation.

    Adding a zero and applying the triangle inequality, we obtain for all $h_k\in\mathcal{H}_k$,
    \begin{align*}
        &|V_k(h_k)-\hat{V}^r_k(h_k)|=|V_k(h_k)-\hat{V}^{r}_k(\xi(h_k))|\\
        &\leq |V_k(h_k) - V_k(\xi(h_k))| + |V_k(\xi(h_k))-{V}^{r}_k(\xi(h_k))|. 
    \end{align*}
    Let $\mathcal{K}=M_xh_x\overline{\mu}\Delta_x$. Based on Lemma \ref{lem_value_function_lipschitz}, we bound the first term by $|V_k(h_k)-V_k(\xi(h_k))|\leq\mathcal{K}$. Next, denoting $h_k=(x_k,u_{k-1},\mydots, x_0)\in\mathcal{H}_k$, ${h}_{k+1}=({x}_{k+1}, {u}_k, \xi(x_k),\xi(u_k),\mydots, \xi(x_0))\in\mathcal{H}_{k+1}$, $k\in[N-1]$, we obtain \allowdisplaybreaks
    \begin{align*}
        &|V_k(\xi(h_k))-{V}^{r}_k(\xi(h_k))| \\ \quad &= \bigg|\int_{\mathcal{U}}\int_{\mathcal{X}}V_{k+1}(h_{k+1})
         \mathcal{Q}(dx_{k+1}|\xi(x_k),u_k){\pi}_k(du_k|\xi(h_{k})) 
        \\&\quad - \sum_{u_k\in\mathcal{U}^r}\sum_{x_{k+1}\in\mathcal{X}^r}{V}^r_{k+1}({h}_{k+1})
        \\ &\quad \mathcal{Q}^r({x}_{k+1}|\xi(x_k),{u}_k){\pi}_k({u}_k|\xi(h_{k}))\bigg| 
        \\        
        &= \bigg|\int_{\mathcal{U}}\int_{\mathcal{X}}V_{k+1}(h_{k+1})
         \mathcal{Q}(dx_{k+1}|\xi(x_k),u_k){\pi}_k(du_k|\xi(h_{k}))
        \\& \quad-\!\int_{\mathcal{U}}\!\int_{\mathcal{X}}\!\hspace{-.4em}\hat{V}^r_{k+1}(h_{k+1})
        \mathcal{Q}(dx_{k+1}|\xi(x_k),\!u_k){\pi}_k(du_k|\xi(h_{k}))\bigg|
        \\
        &\leq \int_{\mathcal{U}}\int_{\mathcal{X}}|V_{k+1}(h_{k+1}) - \hat{V}^r_{k+1}(h_{k+1})|
        \\&\quad |\mathcal{Q}(dx_{k+1}|\xi(x_k),u_k)||{\pi}_k(du_k|\xi(h_{k}))|\\
        &\leq (N-k-1)\mathcal{K},
    \end{align*} 
\interdisplaylinepenalty=10000
    where the first equality follows by definition, the second equality holds by the preliminary result \eqref{eq_proof_error_bounds_prelim_for_step_2} and all partitions being disjoint but covering $\mathcal{X}$, the third relation by triangular inequality and the final relation by the induction hypothesis. Together,
    \begin{align*}
        |V_k(h_k)-\hat{V}^r_k(h_k)|\leq\mathcal{K}+ (N-k-1)\mathcal{K} = (N-k)\mathcal{K}, 
    \end{align*}
    which satisfies the induction hypothesis. 
    
    Note that, by construction of the state partition and definition of the value functions in \eqref{eq_dp_recursion_safety_eval}, $V_N(h_N)=\hat{V}_N^r(h_N)=1$ for all $h_N\in\mathcal{H}^{s}_N$ and zero otherwise. Hence, starting from time-step $k=N$, where $|V_N(h_N)-\hat{V}_N(h_N)|= 0$, completes the induction. 
\end{proof}

\textbf{Step 3: Conclusion.}
We are now in position to prove Theorem \ref{prop_error_bounds}: Let $\pi^{rz}$ be a policy associated to $\mathcal{M}^{rz}$ and adapted to $\mathcal{M}$ and $\mathcal{M}^r$ through $\pi$. By Lemma \ref{lem_occupation_measure_pushforward_r}, $\pi^{rz}$ achieves the same safety on $\mathcal{M}^{rz}$ as $\pi$ on $\mathcal{M}^r$. Since $\pi^{rz}$ is feasible for \eqref{eq_constrained_mdp_formulation} for the MDP $\mathcal{M}^{rz}$, this safety must be at least $\alpha$. By Lemma \ref{lem_boundedness_of_error_to_Mr}, the difference of safety achieved by $\pi$ on $\mathcal{M}$ and $\mathcal{M}^r$ is bounded by $|V_0(h_0)-V_0^r(\xi(h_0))|\leq N h_xM_x\overline{\mu}\Delta_x$. Therefore, the safety achieved by $\pi$ on $\mathcal{M}$ must be at least $\alpha - N h_xM_x\overline{\mu}\Delta_x$, which yields the claim.
\qedsymbol

\subsection{Proof of Proposition \ref{prop_constrained_mdp_formulation_refining}}
\label{sec_appendix_attainability_refined}
Rewrite \eqref{eq_constrained_mdp_formulation_refining} as
\begin{subequations}
    \begin{align}
        &\sup_{\pi\in\Pi} &&\mathbb{E}_{z_0}^{\pi}\left[g_{N+1}(z_{N+1})+\sum_{k=0}^{N}g_k(z_k,u_k)\right] \\
        &\text{s.t.} &&\mathbb{E}_{z_0}^{\pi} \left[f_{N+1}(z_{N+1})
        +\sum_{k=0}^{N}f_k(z_k,u_k)\right]\leq c^{\star}\label{eq_prop3proof_prefinaleq}\\
        & &&\mathbb{E}_{z_0}^{\pi} \left[f_{N+1}(z_{N+1})
        +\sum_{k=0}^{N}f_k(z_k,u_k)\right]\geq c^{\star}. 
    \end{align}%
    \label{eq_constrained_mdp_formulation_refining_inequalities}%
\end{subequations}%
Note that a feasible policy exists by definition, since $c^{\star}$ is the minimum of \eqref{eq_constrained_mdp_formulation}. Further, the functions $g_k$ are upper-semicontinuous and the functions $f_k$ continuous for all $k\in[N+1]$. Following the proof of Lemma \ref{lem_existence_and_markov_property_of_optimal_solution}, under Assumption \ref{ass_assumptions}, the transition kernel $\mathcal{Q}_k^z$ is weakly continuous, the input space $\mathcal{U}_k$ is compact, and \eqref{eq_constrained_mdp_formulation_refining_inequalities} can be brought into form \eqref{eq_general_constrained_mdp}, particularly by multiplying \eqref{eq_prop3proof_prefinaleq} with minus one. Then, by Lemma \ref{lem_optimality_of_constrained_MDPs} there exists a stochastic Markov policy optimally solving Problem \eqref{eq_constrained_mdp_formulation_refining}. \qedsymbol

\ifArxiv
\else
\begin{IEEEbiography}[{\includegraphics[width=\textwidth]{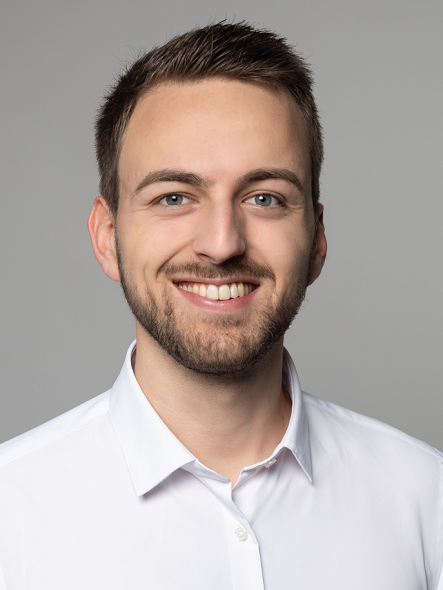}}]
{Niklas Schmid} received his M.Sc. and B.Sc. degree in Medical Engineering Science at the University of L\"{u}beck, Germany, in 2021 and 2019, respectively. He worked on the control of electrosurgical generators at Olympus Surgical Technologies Europe in Berlin, Germany, in 2020 and 2021. Since 2021, he is a PhD student at the Automatic Control Laboratory, ETH Z\"{u}rich, Switzerland, visiting the M.I.T. REALM lab in 2025. His research interests include Markov Decision Processes with probabilistic safety objectives and constraints, control architectures for stochastic systems, and stochastic control applications including precision agriculture and medical devices.
\end{IEEEbiography}

\begin{IEEEbiography}[{\includegraphics[width=\textwidth]{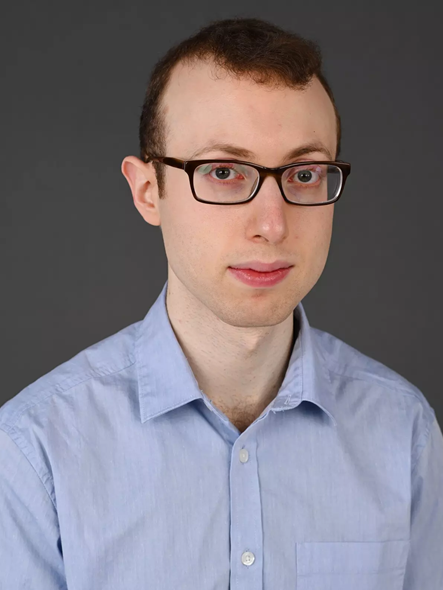}}]
{Jared Miller} is currently a Postdoctoral Researcher at the Chair of Mathematical Systems Theory at the University of Stuttgart working with Prof. Carsten Scherer. He received his B.S. and M.S. degrees in Electrical Engineering from Northeastern University in 2018, and his Ph.D. degree from Northeastern University in 2023 under the advisorship of Mario Sznaier (Robust Systems Laboratory). He was previously a Postdoctoral Researcher Automatic Control Laboratory (IfA) at ETH Z\"{u}rich, in the research group of Prof. Roy S. Smith. He is a recipient of the 2020 Chateaubriand Fellowship from the Office for Science Technology of the Embassy of France in the United States. He was given an Outstanding Student Paper award at the IEEE Conference on Decision and Control in 2021 and in 2022. His research interests include large-scale convex optimization, power systems, semi-algebraic geometry, and analysis of nonlinear systems.
\end{IEEEbiography}

\begin{IEEEbiography}[{\includegraphics[width=\textwidth]{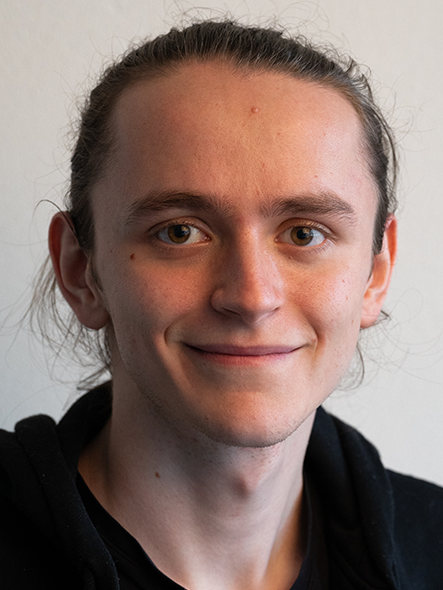}}]
{Tristan Zeller} received his B.Sc. in Electrical Engineering and Information Technology at ETH Zürich, Switzerland, in 2022. Since 2025, he is teaching mathematics and computer science at Gymnasium Burgdorf.
\end{IEEEbiography}

\begin{IEEEbiography}[{\includegraphics[width=\textwidth]{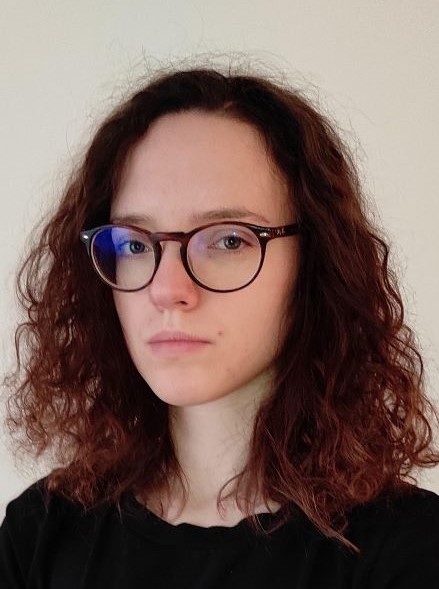}}]
{Marta Fochesato} received her Bachelor's degree in Industrial Engineering in 2017 and the Master's degree in Systems and Industrial Engineering in 2019, both from the University of Padova, Italy. She received her PhD at the Automatic Control Laboratory (IfA), ETH Z\"{u}rich, Switzerland, in 2025. Her research interests include safe learning and control of stochastic systems, and distributionally robust optimization. She is now working in industry.
\end{IEEEbiography}

\begin{IEEEbiography}[{\includegraphics[width=\textwidth]{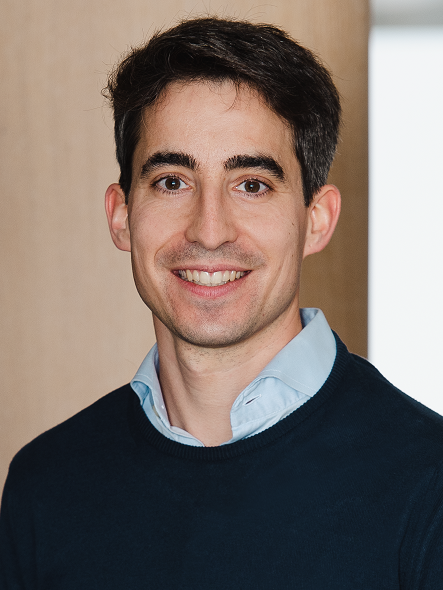}}]
{Tobias Sutter} received his B.Sc. and M.Sc. degrees in Mechanical Engineering from ETH Zürich in 2010 and 2012, respectively, and a Ph.D. in Electrical Engineering from the Automatic Control Laboratory at ETH Zürich in 2017. He subsequently held a postdoctoral position at EPFL in the Risk Analytics and Optimization Laboratory.
From 2021 to 2025, he was Assistant Professor in the Department of Computer Science at the University of Konstanz. Since 2025, he has been an Associate Professor in the Department of Economics at the University of St.~Gallen.
His research focuses on data-driven robust optimization, reinforcement learning, and dynamic decision-making under uncertainty.
He is the recipient of the George S. Axelby Outstanding Paper Award from the IEEE Control Systems Society (2016) and the ETH Medal (2018) for his outstanding doctoral thesis on approximate dynamic programming.
\end{IEEEbiography}

\begin{IEEEbiography}[{\includegraphics[width=\textwidth]{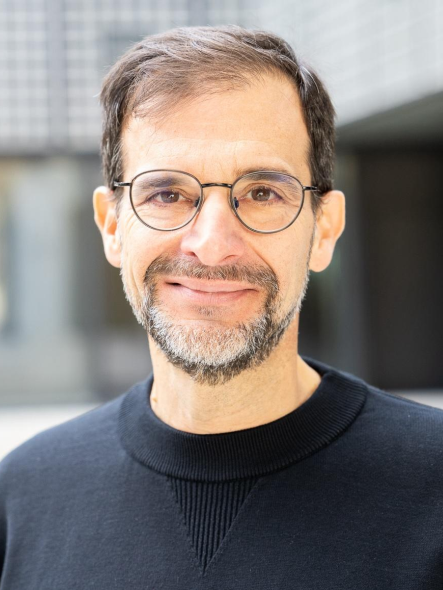}}]
{John Lygeros} received a B.Eng. degree in 1990 and an M.Sc. degree in 1991 from Imperial College, London, U.K. and a Ph.D. degree in 1996 at the University of California, Berkeley. After research appointments at M.I.T., U.C. Berkeley and SRI International, he joined the University of Cambridge in 2000 as a University Lecturer. Between March 2003 and July 2006 he was an Assistant Professor at the Department of Electrical and Computer Engineering, University of Patras, Greece. In July 2006 he joined the Automatic Control Laboratory at ETH Zurich where he is currently serving as the Professor for Computation and Control and the Head of the laboratory. His research interests include modeling, analysis, and control of large-scale systems, with applications to biochemical networks, energy systems, transportation, and industrial processes. John Lygeros is a Fellow of IEEE, and a member of IET and the Technical Chamber of Greece. Since 2013 he is serving as the Vice-President Finances and a Council Member of the International Federation of Automatic Control and since 2020 as the Director of the National Center of Competence in Research Dependable Ubiquitous Automation (NCCR Automation).
\end{IEEEbiography}
\fi

\end{document}